  \newtheorem{theorem}{Theorem}[section] %
  \newtheorem{proposition}[theorem]{Proposition} %
  \newtheorem{lemma}[theorem]{Lemma} %
  \newtheorem{corollary}[theorem]{Corollary} %
  \newtheorem{definition-theorem}[theorem]{Definition-Theorem}
  \newtheorem{example}[theorem]{Example} %
  \newtheorem{prob}[theorem]{Problem}
  \newtheorem{question}[theorem]{Question}
  \newtheorem{observation}[theorem]{Observation}
  \newtheorem{remark}[theorem]{Remark} %
\def\Disc#1{{\operatorname{Disc}({#1})}}
\def \Hom{\operatorname{Hom}}
\def \Hsum#1{{{\underset{#1}{{\sum}^{\oplus}}}}}
\newcommand \itm[1]{\newline\noindent{\rm{#1}}\enspace}
\def\Ad{\operatorname{Ad}}
\def\app{{+ +}}
\def\apm{{+ -}}
\def\amp{{- +}}
\newcommand\pip[3]{\pi^{{#1},{#2}}_{+, {#3}}}
\newcommand\pim[3]{\pi^{{#1},{#2}}_{-, {#3}}}
\newcommand\pia[4]{\pi^{{#2},{#3}}_{{#1}, {#4}}}
\newcommand\der[1]{\frac{\partial}{\partial{#1}}}
\newcommand \set[2]{\{{#1}:{#2}\}}
\def\rarrowsim{\smash{\mathop{\,\rightarrow\,}\limits
  ^{\lower1.5pt\hbox{$\scriptstyle\sim$}}}}
\newcommand\V[4]{V_{{#1}, {#4}}^{({#2}, {#3})}}
\newcommand \gk{$(\frak g, K)$}
\newcommand{\invHom}[3]{\operatorname{Hom}_{#1}(#2,#3)}
\numberwithin{equation}{section}
\numberwithin{equation}{section}
\numberwithin{table}{section}
\begin{document}
\title{Branching laws of unitary representations 
 associated to minimal elliptic orbits
 for indefinite orthogonal group $O(p,q)$
} 
\author{Toshiyuki KOBAYASHI
\\
Graduate School of Mathematical Sciences
 and Kavli IPMU (WPI)
\\
The University of Tokyo}
\date{} %
\maketitle %

{MSC 2010: Primary  22E46; 
          Secondary 
                    22E45, 
53D50, 
58J42, 
53C50. 
}
\begin{abstract}
We give a complete description of the discrete spectra
 in the branching law $\Pi|_{G'}$
 with respect to the pair $(G,G')=(O(p,q), O(p',q') \times O(p'',q''))$
 for irreducible unitary representations $\Pi$ of $G$
 that are \lq\lq{geometric quantization}\rq\rq\ 
 of minimal elliptic coadjoint orbits.  
We also construct explicitly all holographic operators
 and prove a Parseval-type formula.  
\end{abstract}

\section{Introduction and main results}
\label{sec:Intro}
In this article,
 we determine the discrete spectra
 of the restriction $\Pi|_{G'}$
 of an irreducible unitary representation of $G$
 to a subgroup $G'$, 
 where 
\begin{enumerate}
\item[$\bullet$]
$\Pi$ is \lq\lq{attached to}\rq\rq\
 a minimal elliptic coadjoint orbit
 (Section \ref{sec:elliptic}), 
\item[$\bullet$]
$(G,G')=(O(p,q), O(p',q') \times O(p'',q''))$
 with $p=p'+p''$ and $q=q'+q''$.  
\end{enumerate}
We denote by $\widehat {G'}$ the set of equivalence classes
 of irreducible unitary  representations of $G'$
 ({\it{unitary dual}}).  
In Theorem \ref{thm:2002}
 we prove a {\it{multiplicity-free theorem}} asserting
\[
  \dim_{\mathbb{C}} \Hom_{G'}(\pi,\Pi|_{G'}) \le 1
  \quad
  \text{for all $\pi \in \widehat {G'}$}, 
\]
 and give a complete description of the {\it{discrete spectra}}
 for the branching:
\[
  \operatorname{Disc}(\Pi|_{G'})
  :=
  \{ \pi \in \widehat {G'}: \Hom_{G'}(\pi,\Pi|_{G'}) \ne \{0\}
   \}, 
\]
where $\Hom_{G'}(\,,\,)$ denotes the space
 of {\it{continuous}} $G'$-homomorphisms.

The irreducible unitary representations $\Pi$ 
 in consideration are of various aspects such as
\begin{enumerate}
\item[$\bullet$]
they are \lq\lq{geometric quantization}\rq\rq\
 of indefinite K{\"a}hler manifolds
  (Section \ref{subsec:Dolbeault});
\item[$\bullet$]
they are \lq\lq{discrete series representations}\rq\rq\
 for pseudo-Riemannian space forms
 (Section \ref{subsec:Xpq}), 
  \cite{xfar, xstri};
\item[$\bullet$]
they are \lq\lq{unitarization}\rq\rq\
 of the Zuckerman derived functor modules
 that are cohomological induction from a maximal $\theta$-stable
 parabolic subalgebra
 ${\mathfrak {q}}$
 (Section \ref{subsec:Aq}), 
 \cite{xvr, xvz}.  
\end{enumerate}
The representations $\Pi$ of $G=O(p,q)$
 are parametrized by $\varepsilon \in \{\pm\}$
 and $\lambda \in A_{\varepsilon}(p,q)$, 
 see Definition-Theorem \ref{def:pilmd}, 
 and will be denoted by 
$
   \pi_{\varepsilon, \lambda}^{p,q}.  
$

Our first main result gives a description of the discrete part 
 ({\it{cf}}. Section \ref{subsec:Pidisc})
 of the restriction $\Pi|_{G'}$.  
Without loss of generality, 
 we assume $\varepsilon=+$.  
\begin{theorem}
\label{thm:2002}
For $\lambda \in A_+(p,q)$,
 we set $\Pi=\pi_{+,\lambda}^{p,q}$, 
 the irreducible unitary representation
 of $G=O(p,q)$, 
 as in Definition-Theorem \ref{def:pilmd}.  
Then the discrete part
 of the restriction $\Pi|_{G'}$ is a multiplicity-free
 direct sum of irreducible unitary representations
 of the subgroup $G'=O(p', q') \times O(p'',q'')$
 as follows:

\begin{equation}
\label{eqn:2.1.1}
\bigoplus_{(\delta,\varepsilon) \in \{\amp, \app, \apm\}}
\Hsum{(\lambda', \lambda'') \in \Lambda_{\delta \varepsilon}(\lambda)} 
     \pi_{\delta, \lambda'}^{p',q'} \boxtimes 
     \pi_{\varepsilon, \lambda''}^{p'',q''}
\quad
     \text{{\rm{(Hilbert direct sum)}}.}
\end{equation}
\end{theorem}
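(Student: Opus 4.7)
The plan is to combine three pillars of Kobayashi's branching theory:
(i) the Vogan--Zuckerman realization
 $\Pi=\pi^{p,q}_{+,\lambda}\simeq A_{\mathfrak{q}}(\lambda)$
 (Section \ref{subsec:Aq}),
(ii) the admissibility criterion
 $\Kasym{\Pi}\cap\ncone{K'}=\{0\}$,
 which yields an algebraic branching law for the discrete part,
 and
(iii) the propagation principle for multiplicity-freeness,
 derived from a visible action on the elliptic orbit
 in its Dolbeault realization (Section \ref{subsec:Dolbeault}).

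First, I would realize
 $\Pi=A_{\mathfrak{q}}(\lambda)$
 where $\mathfrak{q}$ is a $\theta$-stable parabolic
 of $\mathfrak{so}(p,q)$
 with Levi $\mathfrak{l}\simeq\mathfrak{so}(2)\oplus\mathfrak{so}(p-2,q)$,
 the sign $\varepsilon=+$ encoding
 $\mathfrak{so}(2)\subset\mathfrak{so}(p)\subset\mathfrak{k}$.
The asymptotic $K$-support $\Kasym{\Pi}$ is one-dimensional
 and is easily seen to meet $\ncone{K'}$ only at the origin,
 so Kobayashi's criterion applies and the discrete part
 is governed by an algebraic branching rule.

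Second, I would enumerate
 the $\theta$-stable parabolics $\mathfrak{q}'$
 of $\mathfrak{g}'=\mathfrak{so}(p',q')\oplus\mathfrak{so}(p'',q'')$
 compatible with $\mathfrak{q}$, i.e.\ those whose Levi is contained, up to
 $G'$-conjugacy, in $\mathfrak{l}\cap\mathfrak{g}'$.
Up to conjugacy there are exactly three,
 according to whether the distinguished $\mathfrak{so}(2)$
 lies inside $\mathfrak{so}(p')$, inside $\mathfrak{so}(p'')$,
 or splits diagonally between the two summands;
 these correspond bijectively to the indices
 $(\delta,\varepsilon)\in\{\amp,\app,\apm\}$,
 the case $(-,-)$ being excluded since it would require
 the outer sign on $\mathfrak{so}(p,q)$ to be $-$.
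Applying Kobayashi's algebraic branching law for $A_{\mathfrak{q}}(\lambda)$
 produces each summand as
 $A_{\mathfrak{q}'}(\lambda')\boxtimes A_{\mathfrak{q}''}(\lambda'')$,
 which by Definition-Theorem \ref{def:pilmd}
 is precisely $\pi^{p',q'}_{\delta,\lambda'}\boxtimes\pi^{p'',q''}_{\varepsilon,\lambda''}$.

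Multiplicity-freeness,
 $\dim\Hom_{G'}(\pi,\Pi|_{G'})\le 1$,
 I would obtain from the visible action of $G'$
 on the generalized flag variety
 carrying the Dolbeault model,
 via Kobayashi's propagation theorem.
Unitarity and mutual orthogonality of the summands
 are standard consequences of
 unitarizability of $A_{\mathfrak{q}'}(\lambda')$
 and the distinctness of infinitesimal characters.
I expect the main obstacle to lie in
 the explicit determination of the parameter set
 $\Lambda_{\delta\varepsilon}(\lambda)$
 and, more subtly, the \emph{saturation} of the discrete spectrum
 --- namely that no further summands appear beyond
 (\ref{eqn:2.1.1}).
This last point likely requires a careful comparison with
 the Plancherel formula on the pseudo-Riemannian space form $X(p,q)$
 (Section \ref{subsec:Xpq})
 in order to separate genuinely discrete contributions
 from continuous ones that could masquerade as discrete
 under an imprecise identification.
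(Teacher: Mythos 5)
Your strategy collapses at the second step: the claim that $\Kasym{\Pi}\cap\ncone{K'}=\{0\}$, so that ``the discrete part is governed by an algebraic branching rule,'' is false in exactly the cases that matter. By Theorem \ref{thm:discdeco} (and the classification of discretely decomposable $A_{\mathfrak q}(\lambda)$ recalled in Lemma \ref{lem:K'adm}), the underlying \gk-module $\Pi_K$ is discretely decomposable as a $({\mathfrak g}',K')$-module if and only if $p'=0$ or $p''=0$; equivalently, the asymptotic $K$-support criterion fails whenever $p'p''\ne 0$, which is the generic case of the theorem. Worse, the discrete summands with $(\delta,\varepsilon)=(-,+)$ or $(+,-)$ --- which form the infinite families in \eqref{eqn:2.1.1} --- are invisible to any algebraic analysis of $\Pi_K$: by Corollary \ref{cor:discdeco}, when $p'p''\ne 0$ every closed irreducible $G'$-submodule $W$ of $\Pi$ satisfies $W\cap\Pi^{\infty}=\{0\}$, because the corresponding eigenfunctions are supported in the closure of a single region $X(p,q)_{\delta\varepsilon}$ and are not smooth across its boundary. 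So there is no ``Kobayashi algebraic branching law along compatible $\theta$-stable parabolics'' producing these summands, and your enumeration of three compatible $\mathfrak q'$ is a heuristic without a theorem behind it; in the paper the three labels $\amp,\app,\apm$ arise instead from the three $G'$-orbit-type regions of $X(p,q)$ and the a priori constraint of Proposition \ref{prop:3.1.1} (from \cite{xkdisc}), in the $L^2(X(p,q))_{\lambda}$ realization rather than the $A_{\mathfrak q}(\lambda)$ realization.

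The two issues you defer are in fact the entire content of the proof. The determination of $\Lambda_{\delta\varepsilon}(\lambda)$ and the saturation of the discrete spectrum are not obtained by ``comparison with the Plancherel formula''; they are proved by showing that any element of $\Hom_{G'}(\pi,\Pi|_{G'})$ must, after separation of variables, be built from the unique $L^2$-admissible solution $u_{(\infty)}^-$ of a hypergeometric ODE (Lemma \ref{lem:psi+-}), then using Kummer's connection formula and an $L^2$-integrability analysis near the boundary (Lemmas \ref{lem:Kummer}, \ref{lem:KummerL2}), including the delicate borderline case $\lambda''=\tfrac12$ where the extension by zero fails to be a weak solution of \eqref{eqn:Laplmd} (Lemmas \ref{lem:Heviside}, \ref{lem:191496}). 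Multiplicity-freeness is then a byproduct of this uniqueness (Theorem \ref{thm:4.2}, via Schur's lemma), not of a propagation theorem for visible actions: the propagation/visibility machinery gives bounded-multiplicity statements of the type discussed in the Appendix, but you have not shown it yields multiplicity one for the discrete part of a non-discretely-decomposable, non-highest-weight restriction. Conversely, the existence half of the theorem requires actually constructing the holographic operators $T_{\delta\varepsilon,\lambda}^{\lambda',\lambda''}$ and verifying they land in $L^2(X(p,q))_{\lambda}$ as weak solutions (Theorem \ref{thm:holographic}, Propositions \ref{prop:opnorm} and \ref{prop:weakSol}); your proposal contains no substitute for this analytic construction.
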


Here the parameter set $\Lambda_{\delta,\varepsilon}(\lambda)$ is defined 
 for $\lambda \in A_+(p,q)$ by 
\begin{align*}
\Lambda_\amp(\lambda) 
  &:=\set{(\lambda', \lambda'') \in A_-(p',q') \times A_+(p'',q'')}{
           \lambda'' - \lambda - \lambda' - 1 \in 2 \Bbb N}, 
\\
\Lambda_\app(\lambda) 
  &:=\set{(\lambda', \lambda'') \in A_+(p',q') \times A_+(p'',q'')}{
         \lambda - \lambda' - \lambda'' - 1 \in 2 \Bbb N}, 
\\
\Lambda_\apm(\lambda) 
  &:=\set{(\lambda', \lambda'') \in A_+(p',q') \times A_-(p'',q'')}{
          \lambda' - \lambda'' - \lambda - 1 \in 2 \Bbb N}.  
\end{align*}
We note that $\Lambda_{++}(\lambda)$ is a finite set, 
 whereas $\Lambda_{\apm}(\lambda)$ (also $\Lambda_{\amp}(\lambda)$)
 is an infinite set 
 unless it is empty.

Our proof is geometric and constructive.  
It is outlined as follows.  
First, 
 we divide the pseudo-Riemannian space form 
 $G/H =O(p,q)/O(p-1,q)$
 into three regions (up to conull set)
 according to orbit types
 labeled by $\amp$, $\app$, $\apm$
 of the subgroup $G'$. 
Second, 
 we introduce $G'$-intertwining operators
 ({\it{holographic operators}}) from each irreducible summand
 of \eqref{eqn:2.1.1}
 to the original representation $\pi_{+,\lambda}^{p,q}$
 by realizing these representations
 in the space of eigenfunctions of the Laplacian 
 on pseudo-Riemannian space forms 
 (Theorem \ref{thm:holographic}).  
The final step is to prove the exhaustion
 of \eqref{eqn:2.1.1}, 
 which is carried out 
 by a careful estimate 
 of the boundary behaviours
 of solutions 
 that \lq\lq{holographic operators}\rq\rq\
 must satisfy (Section \ref{sec:5}).  

Here is an example of Theorem \ref{thm:2002}
 when $(p'', q'')=(1,0)$ and $(0,1)$.  

\begin{example}
\label{ex:GP}
Suppose $p \ge 2$ and $q\ge 1$.  
Let $\Pi := \pi_{+,\lambda}^{p,q} \in \widehat G$
 for $\lambda \in A_+(p,q)$.  
\begin{enumerate}
\item[{\rm{(1)}}]
{\rm{(\cite{xk:1})}}\enspace
If $(p'', q'')=(0,1)$, 
 then $\Lambda_{\amp}(\lambda)=\Lambda_{\app}(\lambda)=\emptyset$
 and 
\[
   \Pi|_{G'} 
   =
   \Hsum{n \in {\mathbb{N}}} 
   \pi_{+,\lambda+n+\frac 1 2}^{p,q-1}
   \boxtimes 
   (\operatorname{sgn})^{n}, 
\]
where $\operatorname{sgn}$ stands for the nontrivial character
 of $O(1) \simeq O(1,0)$.  
\item[{\rm{(2)}}]
If $(p'', q'')=(1,0)$, 
 then $\Lambda_{\amp}(\lambda)=\Lambda_{\apm}(\lambda)=\emptyset$.  
Moreover,
 $\Hom_{G'}(\pi,\Pi|_{G'}) \ne \{0\}$
 if and only if $\pi \in \widehat {G'}$ is of the form
\[
   \pi=
   \pi_{+,\lambda-n-\frac 1 2}^{p-1,q}
   \boxtimes
   (\operatorname{sgn})^{n}
\qquad
\text{for some $0 \le n < \lambda-\frac 1 2$}.  
\]
\end{enumerate}
\end{example}
In the general case where $p',p'',q',q'' \ge 2$
 and $\lambda > 2$, 
 all the three parameter sets $\Lambda_{\amp}(\lambda)$, 
$\Lambda_{\app}(\lambda)$, 
 and $\Lambda_{\apm}(\lambda)$
 are nonempty 
 (Section \ref{sec:comments}).

As a corollary of Theorem \ref{thm:2002} and its proof, 
 we find a necessary and sufficient condition
 on the quadruple $(p',p'',q',q'')$
 for the restriction $\Pi|_{G'}$
 to have the following properties:
\begin{enumerate}
\item[$\bullet$]
$\Pi|_{G'}$ is discretely decomposable
 (Theorem \ref{thm:discdeco}), 
\item[$\bullet$]
the discrete part \eqref{eqn:2.1.1} is at most a finite sum
 (Theorem \ref{thm:191419}), 
\item[$\bullet$]
 $\Pi|_{G'}$ contains only continuous spectrum 
 (Theorem \ref{thm:conti}).  
\end{enumerate}

Our results can be also applied
 to the existence problem
 of symmetry breaking operators
 between {\it{smooth representations}} of $G$ and its subgroup $G'$.  
Let $\Pi^{\infty}$ be the Fr{\'e}chet space of smooth vectors
 of the unitary representation $\Pi$ of $G$, 
 and $\pi^{\infty}$ that of a unitary representation 
 $\pi$ of the subgroup $G'$.  

\begin{corollary}
\label{cor:SBO}
Let $\Pi=\pi_{+,\lambda}^{p,q} \in \widehat G$ 
 for $\lambda \in A_+(p,q)$
 and $\pi=\pi_{\delta,\lambda'}^{p',q'} \boxtimes \pi_{\varepsilon,\lambda''}^{p'',q''}\in \widehat{G'}$
 for some $(\delta, \varepsilon)=(-,+)$, $(+,+)$, or $(+,-)$.  
Then we have:
\begin{equation}
\label{eqn:SBO}
  \Hom_{G'}(\Pi^{\infty}|_{G'}, \pi^{\infty})\ne\{0\}
\quad
\text{if $(\lambda', \lambda'') \in \Lambda_{\delta,\varepsilon} (\lambda)$}.  
\end{equation}
\end{corollary}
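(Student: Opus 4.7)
The plan is to derive Corollary \ref{cor:SBO} as a formal consequence of Theorem \ref{thm:2002}, essentially by taking the Hilbert-space adjoint of the embedding provided by the discrete decomposition. When $(\lambda', \lambda'') \in \Lambda_{\delta,\varepsilon}(\lambda)$, Theorem \ref{thm:2002} produces a nonzero isometric $G'$-equivariant embedding $\iota : \pi \hookrightarrow \Pi|_{G'}$ onto one of the summands of the Hilbert direct sum \eqref{eqn:2.1.1}. Let $P : \Pi \to \pi$ denote the orthogonal projection onto this summand. Then $P$ is a nonzero bounded $G'$-intertwining operator between unitary representations, and one should take $P^{\infty} := P|_{\Pi^{\infty}}$ as the candidate symmetry breaking operator on smooth vectors.

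The verification that $P^{\infty}$ actually lands in $\pi^{\infty}$ and is continuous in the Fr{\'e}chet topologies is routine. Given $v \in \Pi^{\infty}$, the orbit map $g' \mapsto \Pi(g') v$ is smooth into $\Pi$, and composing with the bounded operator $P$ gives the smooth map $g' \mapsto \pi(g') P v = P \Pi(g') v$, so that $P v \in \pi^{\infty}$. Moreover, $G'$-equivariance of $P$ yields $\|\pi(X) P v\|_{\pi} \le \|\Pi(X) v\|_{\Pi}$ for every $X \in U(\mathfrak{g}'_{\mathbb{C}})$, whence $P^{\infty}$ is continuous with respect to the defining seminorms on smooth vectors.

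It remains only to check that $P^{\infty}$ is nonzero: since $\Pi^{\infty}$ is dense in $\Pi$ and $P$ is continuous, $P(\Pi^{\infty})$ is dense in $P(\Pi) = \pi$ and in particular nonzero. Hence $P^{\infty}$ realizes the required nonzero element of $\operatorname{Hom}_{G'}(\Pi^{\infty}|_{G'}, \pi^{\infty})$, proving \eqref{eqn:SBO}. There is no serious obstacle here; the corollary is a formal consequence of Theorem \ref{thm:2002} combined with the standard fact that a bounded Hilbert-space intertwiner between unitary representations automatically restricts to a continuous map on smooth vectors.
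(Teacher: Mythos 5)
Your argument is correct and is essentially the route the paper intends: Corollary \ref{cor:SBO} is deduced from Theorem \ref{thm:2002} by taking the adjoint (equivalently, the orthogonal projection onto the discrete summand furnished by the holographic embedding) and restricting to smooth vectors, where equivariance, the inclusion $U({\mathfrak{g}}'_{\mathbb{C}}) \subset U({\mathfrak{g}}_{\mathbb{C}})$, and density of $\Pi^{\infty}$ in $\Pi$ give continuity and nonvanishing exactly as you say. No gap.
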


The second main theorem in this article
 is a quantitative result:
 for every $(\lambda', \lambda'') \in \Lambda_{\delta,\varepsilon} (\lambda)$, 
 we construct explicitly
 in a geometric model of representations
 a holographic operator
 (an injective $G'$-intertwining operator)
\[
  T_{\delta\varepsilon,\lambda}^{\lambda',\lambda''}
  \colon
  \pi_{\delta,\lambda'}^{p',q'} \boxtimes \pi_{\varepsilon,\lambda''}^{p'',q''}
  \to
  \pi_{+,\lambda}^{p,q}, 
\]
 and find a closed formula
 of its operator norm
 (Theorem \ref{thm:holographic}).

\vskip 0.8pc
Branching laws in the same setting with specific choices
 of $p'$, $p''$, $q'$, $q''$
 have been studied over 25 years:
\begin{enumerate}
\item[$\bullet$]
When $(p'',q')=(0,0)$, 
 Theorem \ref{thm:2002} is nothing but the $K$-type formula, 
 and can be computed by a generalized Blattner formula
 of the Zuckerman derived functor modules
 \cite{xvr, xk92}, 
 see also Faraut \cite{xfar}, Howe--Tan \cite{xhowetan}.  

\item[$\bullet$]
When $p''=0$, 
 the restriction $\Pi|_{G'}$ is discretely decomposable
 (Theorem \ref{thm:discdeco}).  
In this case,
 Theorem \ref{thm:2002} gives the whole branching law
of the restriction $\Pi|_{G'}$, 
 which was determined in \cite[Thm.~3.3]{xk:1}.  
The special case $(p,q)=(3,3)$  with $(p'', q'')=(0,1)$ was also studied in \cite{xspeh}.  

\item[$\bullet$]
When $(q',q'')=(1,0)$ (hence $q=1$), 
 the branching law of $\Pi|_{G'}$ was obtained in \cite{xmo}.  
In this case, 
 $\Pi|_{G'}$ contains also continuous spectrum.  

\item[$\bullet$]
In the case $p''=q=1$, 
 an analogous result to \eqref{eqn:SBO} was studied 
  in \cite[Thms.~4.1 and 4.2]{xksbonvec}
 when $\Pi^{\infty}$ and $\pi^{\infty}$ are cohomologically induced
 representations from more general parabolic subalgebras.  

\item[$\bullet$]
If $(p'',q'')=(1,0)$ or $(0,1)$, 
 then $\Hom_{G'}(\Pi^{\infty}|_{G'}, \pi^{\infty})$ is at most
 of one-dimensional by the general result of Sun and Zhu \cite{xsunzhu}.  
In this case, 
 the discrete spectra \eqref{eqn:2.1.1} are stated
 in Example \ref{ex:GP}, 
 and some part of them
 have been obtained recently in {\O}rsted and Speh \cite{xso}
 by a different approach
 under the constraints
 that $b(\lambda) \ge 0$
 (see \eqref{eqn:b} for notation).  
\end{enumerate}

For general $p'$, $q'$, $p''$, $q''$, 
 the complete classification of discrete spectra
(Theorem \ref{thm:2002}),
 and the construction of all holographic operators
 with a Parseval-type theorem 
 (Theorems \ref{thm:holographic} and \ref{thm:4.2})
 were presented at the conference
 \lq\lq{Analyse harmonique sur les groupes de Lie
 et les espaces sym\'etriques}\rq\rq\
 en l'honneur de Jacques Faraut held in Nancy-Strasbourg 
 in June, 2005, 
 however, the manuscript \cite{xkmin}
 has not been published.

Because of growing interest in branching problems
 for reductive groups in recent years, 
 I come to think
 that the results and the methods here might be of some help 
 for further perspectives
 such as a possible generalization 
 of the Gross--Prasad conjecture
 for nontempered representations
 ({\it{e.g.}} \cite{xgp, xksbonvec, xso})
 as well as analytic representation theory.

\vskip 1pc
{\bf{$\langle$Acknowledgements$\rangle$}}\enspace
The author was partially supported
 by Grant-in-Aid for Scientific Research (A) (18H03669), 
Japan Society for the Promotion of Science.

\vskip 1pc
{\bf{Notation:}}\enspace
${\mathbb{N}}=\{0,1,2,\dots\}$ and 
${\mathbb{N}}_+=\{1,2,\dots\}$.

\section{Irreducible unitary representations
 attached to minimal elliptic orbits}
\label{sec:elliptic}
In this section,
 we discuss a certain family of irreducible unitary representations
 of $G=O(p,q)$,
 denoted by $\pi_{\varepsilon,\lambda}^{p,q}$ 
 with parameter $\varepsilon=\pm$
 and $\lambda \in A_{\varepsilon}(p,q)$
defined as below:
\begin{align}
\label{eqn:A+}
A_+(p,q):=&
\begin{cases}
\{\lambda \in {\mathbb{Z}}+\frac{p+q}2: \lambda >0\} 
&(p \ge 2,q\ge 1), 
\\
\{\lambda \in {\mathbb{Z}}+\frac{p}2: \lambda \ge \frac p 2-1\} 
&(p\ge 2,q=0), 
\\
\emptyset
&(p=1,q\ge 1)\,\, \text{ or }\,\, (p=0),  
\\
\{-\frac 1 2, \frac 1 2\} 
&(p=1,q=0).  
\end{cases}
\\
\label{eqn:A-}
A_-(p,q):=&A_+(q,p).  
\end{align}
The representations $\pi_{\varepsilon,\lambda}^{p,q}$ are a generalization 
 of the finite-dimensional representations
 of the compact group $O(p)$
 on the space ${\mathcal{H}}^m({\mathbb{R}}^p)$
 of spherical harmonics
 (see Remark \ref{rem:2.2} (1)).  
These unitary representations $\pi_{\varepsilon,\lambda}^{p,q}$
 have been treated from various aspects
 in scattered literatures
 (\cite{xfar, xhowetan, xk92, xk:1, opq2, xspeh, xso, xstri}).  
For the convenience of the reader,
 we summarize a number of realizations
 of the representations $\pi_{\varepsilon,\lambda}^{p,q}$
 when $\varepsilon=+$ 
 in Section \ref{subsec:pilmd}.  

Throughout this section, 
 we adopt the same notation as in \cite{opq2}.

\subsection{Summary: four realizations of $\pi_{\varepsilon,\lambda}^{p,q}$}
\label{subsec:pilmd}
We use the German lower case letter ${\mathfrak{g}}$, 
 ${\mathfrak{k}}$, $\cdots$, 
 to denote the Lie algebras
 of $G$, $K$, $\cdots$, 
 and write ${\mathfrak{Z}}({\mathfrak{g}})$
 for the center of the enveloping algebra
 of the complexified Lie algebra
 ${\mathfrak{g}}_{\mathbb{C}}
 ={\mathfrak{g}} \otimes_{\mathbb{R}} {\mathbb{C}}$.
For ${\mathfrak{g}}={\mathfrak{o}}(p,q)$, 
 we set
\begin{equation}
\label{eqn:rho}
\rho:=\frac 1 2 (p+q-2).  
\end{equation}

For $\lambda \in A_+(p,q)$, 
 we put
\begin{align}
b\equiv\, & b_+(\lambda,p,q):=\lambda-\frac p 2 + \frac q 2 + 1 \in {\mathbb{Z}}, 
\label{eqn:b}
\\
\label{eqn:e}
\delta \equiv\, & \delta_+ (\lambda,p,q)
:=(-1)^b.  
\end{align}

\begin{definition-theorem}
\label{def:pilmd}
Let $p \ge 2$ and $q \ge 0$.  
For any $\lambda \in A_+(p,q)$, 
 there exists a unique irreducible unitary representation
 of $G=O(p,q)$, 
 to be denoted by $\pi_{+, \lambda}^{p,q}$, 
 whose underlying \gk-module is given by
 one of (therefore, any of) the following \gk-modules
 that are isomorphic to each other:
\begin{enumerate}
\item[{\rm{(i)}}]
The Zuckerman derived functor module
 $A_{\mathfrak{q}}(\lambda-\rho)$ 
 (see Section \ref{subsec:Aq});
\item[{\rm{(ii)}}]
(geometric quantization of coadjoint orbits)\enspace
the underlying \gk-module of the Dolbeault cohomology 
 $H_{\overline \partial}^{p-2}({\mathcal{O}}_{\lambda}, {\mathcal{L}}_{\lambda+\rho})$
 (see Section \ref{subsec:Dolbeault});
\item[{\rm{(iii)}}]
the underlying \gk-module of the subrepresentation 
of the parabolic induction $I_{\delta}(\lambda+\rho)$
 with $K$-types $\Xi(K;b)$
 (see Section \ref{subsec:ps});
\item[{\rm{(iii)$'$}}]
the underlying \gk-module of the quotient 
 of the parabolic induction $I_{\delta}(-\lambda+\rho)$
 with $K$-types $\Xi(K;b)$;
\item[{\rm{(iv)}}]
the underlying \gk-module of the discrete series representation
 $L^2(X(p,q))_{\lambda}$
 (see Section \ref{subsec:Xpq})
 for the symmetric space
 $X(p,q)=O(p,q)/O(p-1,q)$.  
\end{enumerate}
The ${\mathfrak{Z}}({\mathfrak{g}})$-infinitesimal character 
 of $\pi_{+, \lambda}^{p,q}$ is given by
\begin{equation}
\label{eqn:Zginf}
  (\lambda,\frac{p+q}2-2,\frac{p+q}2-3,\cdots, \frac{p+q}2-[\frac{p+q}2])
\end{equation}
in the Harish-Chandra parametrization
 for the standard basis, 
 and the minimal $K$-type of $\pi_{+, \lambda}^{p,q}$
 is given by
\[
\begin{cases}
{\mathcal{H}}^b({\mathbb{R}}^p) \boxtimes {\bf{1}}
\quad
&\text{if $b \ge 0$}, 
\\
{\bf{1}} \boxtimes {\bf{1}}
\quad
&\text{if $b \le 0$}.  
\end{cases}
\]
\end{definition-theorem}

The proof of the equivalence is given in \cite[Thm.~3]{xk92}
 and \cite[Sect.~5.4]{opq2}, 
 see also references therein.  
Since these rich aspects
 of the representations $\pi_{\varepsilon, \lambda}^{p,q}$
 are the heart of our main results
 in both the proof and perspectives,
 we give a brief account
 on each of these aspects
 in Sections \ref{subsec:Aq}--\ref{subsec:Xpq} below.  

\begin{remark}
\label{rem:2.2}
\begin{enumerate}
\item[{\rm{(1)}}]
When $q=0$, 
 $\pi_{+, \lambda}^{p,0}$ is an irreducible finite-dimensional representation
 of the compact group $O(p,0) \simeq O(p)$ 
 on the space ${\mathcal{H}}^m({\mathbb{R}}^p)$
 of spherical harmonics of degree $m=\lambda-\frac p 2+1$.  
\item[{\rm{(2)}}]
The conditions {\rm{(iii)}} and {\rm{(iii)$'$}}
 in Definition-Theorem \ref{def:pilmd}
 make sense for $q >0$;
the other conditions for $q \ge 0$.  
\end{enumerate}
\end{remark}

For $(p,q)=(1,0)$, 
 $O(p,q) \simeq O(1)$.  
It is convenient to set
\[
  \text{
  $A_+(p,q)=\{\tfrac 1 2, -\tfrac 1 2\}\,\,$
 and $\,\,\pi_{+, \lambda}^{1,0}
  :=
  \begin{cases}
  {\bf{1}}\quad&\text{if $\lambda=-\tfrac 1 2$, }
\\
  {\operatorname{sgn}}\quad&\text{if $\lambda=\tfrac 1 2$.}
  \end{cases}$
}
\]
Via the isomorphism of Lie groups $O(p,q) \simeq O(q,p)$,
 we define an irreducible unitary representation
 $\pi_{-, \lambda}^{p,q}$
 for $\lambda \in A_-(p,q)$
 to be the one $\pi_{+, \lambda}^{q,p}$ of $O(q,p)$, 
 where we recall from \eqref{eqn:A-}
 that $A_-(p,q)=A_+(q,p)$.

By the $K$-type formula (see the condition (iii)
 in Definition-Theorem \ref{def:pilmd}
 and by the formula \eqref{eqn:Zginf}
 of the ${\mathfrak{Z}}({\mathfrak{g}})$-infinitesimal character, 
 the following proposition holds.  
\begin{proposition}
\label{prop:pilmd}
Irreducible unitary representations of $G=O(p,q)$ 
 in the following set are not isomorphic to each other:
\[
 \{\pi_{+, \lambda}^{p,q}
   : \lambda \in A_+(p,q)
 \}
  \cup 
 \{\pi_{-, \lambda}^{p,q}
   : \lambda \in A_-(p,q).  
 \}
\]
\end{proposition}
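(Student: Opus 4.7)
The plan is to distinguish any two representations in the union by comparing the two $(\mathfrak g,K)$-module invariants singled out in Definition-Theorem~\ref{def:pilmd}: the $\mathfrak Z(\mathfrak g)$-infinitesimal character \eqref{eqn:Zginf} and the minimal $K$-type.  Since every $\pi_{\varepsilon,\lambda}^{p,q}$ is an irreducible unitary representation, a $G$-equivariant isomorphism between any two of them would descend to an isomorphism of the underlying Harish--Chandra modules, so it suffices to exhibit one of these two invariants that differs.

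First I would compare $\pi_{+,\lambda}^{p,q}$ and $\pi_{+,\mu}^{p,q}$ for distinct $\lambda,\mu\in A_+(p,q)$.  The infinitesimal characters \eqref{eqn:Zginf} share the tail $(\tfrac{p+q}{2}-2,\ldots,\tfrac{p+q}{2}-[\tfrac{p+q}{2}])$, whose entries have pairwise distinct absolute values (as $j+k=p+q$ has no solution with $j\ne k$ in $\{2,\ldots,[\tfrac{p+q}{2}]\}$).  The Weyl group of $\mathfrak o(p+q,\mathbb C)$ acts by signed permutations; in the odd-rank ($B$) case all sign changes are allowed, and in the even-rank ($D$) case the tail already contains the entry $0$, rendering the even-sign-flip restriction inoperative.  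Hence a Weyl orbit is determined by the multiset of absolute values of its coordinates, and because $\lambda,\mu>0$ different parameters produce different multisets.  The companion case $\pi_{-,\lambda}^{p,q}$ versus $\pi_{-,\mu}^{p,q}$ reduces to this one via the isomorphism $O(p,q)\simeq O(q,p)$ together with the definition $\pi_{-,\lambda}^{p,q}=\pi_{+,\lambda}^{q,p}$.

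Next, for $\pi_{+,\lambda}^{p,q}$ versus $\pi_{-,\mu}^{p,q}$ the infinitesimal characters can genuinely coincide (the shape \eqref{eqn:Zginf} is symmetric in $p$ and $q$), so the separation must be made on the $K$-side.  With $b_+(\lambda):=\lambda-\tfrac p2+\tfrac q2+1$ and $b_-(\mu):=\mu-\tfrac q2+\tfrac p2+1$, the minimal $K$-type of $\pi_{+,\lambda}^{p,q}$ is $\mathcal H^{b_+(\lambda)}(\mathbb R^p)\boxtimes\mathbf 1$ when $b_+(\lambda)\ge 1$, concentrated on the $O(p)$-factor of $K=O(p)\times O(q)$; symmetrically that of $\pi_{-,\mu}^{p,q}$ is $\mathbf 1\boxtimes\mathcal H^{b_-(\mu)}(\mathbb R^q)$ when $b_-(\mu)\ge 1$, concentrated on the $O(q)$-factor.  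The positivity $\lambda,\mu>0$ rules out the simultaneous degenerations $b_+(\lambda)\le 0$ and $b_-(\mu)\le 0$ (which would demand $p>q+2$ and $q>p+2$ at once), so at least one of the two minimal $K$-types is non-trivial on a single tensor factor, and the two cannot coincide.

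The main subtlety lies in the boundary pieces of \eqref{eqn:A+}--\eqref{eqn:A-}: when $q=0$ or $p\le 1$, the minimal-$K$-type formula needs a separate reading and the first coordinate $\lambda$ of \eqref{eqn:Zginf} can in principle coincide with one of the values $|c_j|$ in the tail.  In these low-rank slots I would verify the claim from the explicit lists directly: for $(p,q)=(1,0)$ the two representations $\pi_{+,-1/2}^{1,0}=\mathbf 1$ and $\pi_{+,1/2}^{1,0}=\operatorname{sgn}$ are manifestly distinct, and for $q=0$ the finite-dimensional representations $\mathcal H^m(\mathbb R^p)$ from Remark~\ref{rem:2.2}(1) are pairwise inequivalent by their degree $m=\lambda-\tfrac p2+1$.
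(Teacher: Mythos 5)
Your proof is correct and takes essentially the same route as the paper, which disposes of the proposition in one line by comparing the ${\mathfrak{Z}}({\mathfrak{g}})$-infinitesimal character \eqref{eqn:Zginf} with the $K$-type data of Definition-Theorem \ref{def:pilmd} (the paper invokes the full $K$-type formula (iii), you the minimal $K$-type --- the same invariant package, used the same way: the infinitesimal character separates parameters within each family $\varepsilon=\pm$, the $K$-side separates the two families). Your explicit treatment of the Weyl-group action and of the low-rank and $q=0$ cases merely fills in details the paper leaves implicit.
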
 

\subsection{Zuckerman derived functor modules $A_{\mathfrak {q}}(\lambda)$}
\label{subsec:Aq}
Let $G=O(p,q)$, 
 and $\theta$ the Cartan involution
 corresponding to a maximal compact subgroup $K=O(p) \times O(q)$.  
We take a Cartan subalgebra ${\mathfrak{t}}$ of ${\mathfrak{k}}$, 
 and extend it to that of ${\mathfrak{g}}$, 
 to be denoted by ${\mathfrak{j}}$.  
Take the standard basis 
 $\{f_i: 1 \le i \le [\frac{p+q}2]\}$
 of ${\mathfrak{j}}_{\mathbb{C}}^{\ast}$
 such that the root system 
 $\Delta({\mathfrak{g}}_{\mathbb{C}}, {\mathfrak{j}}_{\mathbb{C}})$
 is given by
\[
  \{\pm f_i \pm f_j
    : 1 \le i < j \le [\frac{p+q}{2}]\}
\,\,
   (\cup \{\pm f_i: 1 \le i \le [\frac{p+q}{2}]\} 
\,\,\,
  (\text{$p+q$: odd})).  
\]
Let ${\mathfrak{q}}={\mathfrak{l}}_{\mathbb{C}} + {\mathfrak{u}}$
 be a $\theta$-stable parabolic subalgebra
 of ${\mathfrak{g}}_{\mathbb{C}}$
 with Levi part ${\mathfrak{l}}_{\mathbb{C}}$ containing ${\mathfrak{j}}_{\mathbb{C}}$
 and nilpotent radical ${\mathfrak{u}}$ defined by
\[
   \Delta({\mathfrak{u}}, {\mathfrak{j}}_{\mathbb{C}})
   =\{ f_1 \pm f_j
    : 2 \le j \le [\frac{p+q}{2}]\}
\,\,
   (\cup \{f_1\} 
\,\,\,
  (\text{$p+q$: odd})).  
\]
Then the normalizer $L$ of ${\mathfrak{u}}$ in $G$ is given by
\begin{equation}
\label{eqn:Levi}
   L \simeq SO(2) \times O(p-2,q).  
\end{equation}

For $\nu \in {\mathbb{Z}}$, 
 we write ${\mathbb{C}}_{\nu f_1}$
 for the one-dimensional representation of the Levi subgroup $L$
 by letting the second factor act trivially.  
The same letter ${\mathbb{C}}_{\nu f_1}$ is used 
to denote a character of the Lie algebra ${\mathfrak{l}}$
 for $\nu \in {\mathbb{C}}$.

Zuckerman introduced cohomological parabolic induction
 ${\mathcal{R}}_{\mathfrak{q}}^j$
 ($j \in {\mathbb{N}}$) 
 which is a covariant functor from the category 
 of $({\mathfrak{l}}, L \cap K)$-modules
 (or that of metaplectic $({\mathfrak{l}}, L \cap K)\tilde{}$-modules)
 to that of \gk-modules.

We note that ${\mathbb{C}}_{\lambda f_1}$ lifts 
 to the metaplectic $({\mathfrak{l}}, L \cap K)\tilde{}$-module
 if and only if ${\mathbb{C}}_{(\lambda+\rho) f_1}$ lifts to $L$, 
 namely,
 $\lambda \in {\mathbb{Z}}+\frac 1 2(p+q)$.  
In particular,
 for $\lambda \in A_+(p,q)$ ($\subset {\mathbb{Z}}+\frac 1 2 (p+q)$), 
 we obtain \gk-modules ${\mathcal{R}}_{\mathfrak{q}}^j({\mathbb{C}}_{\lambda f_1})$ for $j \in {\mathbb{N}}$, 
 which vanish except for $j =p-2$, 
 and the resulting \gk-module is 
\[
   {\mathcal{R}}_{\mathfrak{q}}^{p-2}({\mathbb{C}}_{\lambda f_1})
   \simeq
   A_{\mathfrak{q}}(\lambda-\rho).  
\]
Here we have adopted the convention and normalization
 in \cite[Def.~6.20]{xvr} for ${\mathcal{R}}_{\mathfrak{q}}^j$
 and in \cite{xvz} for $A_{\mathfrak{q}}(\cdot)$.  
This normalization means
 that $A_{\mathfrak{q}}(\nu)$ has nonzero \gk-cohomologies
 when $\nu=0$, 
 whereas ${\mathcal{R}}_{\mathfrak{q}}^j$ preserves
 the ${\mathfrak{Z}}({\mathfrak{l}})$- and ${\mathfrak{Z}}({\mathfrak{g}})$-infinitesimal characters
 in the Harish-Chandra parametrization modulo
 the Weyl groups $W_L$ and $W_G$.  

The general theory of the Zuckerman cohomological parabolic induction
 (see \cite{xvr} for instance)
 assures
 that the \gk-module 
 ${\mathcal{R}}_{\mathfrak{q}}^{p-2}({\mathbb{C}}_{\lambda f_1})$
 is nonzero and irreducible
 if $\lambda$ is in the \lq\lq{good range}\rq\rq\
 ({\it{i.e.}} if $\lambda > \frac 1 2 (p+q)-2$), 
 whereas the same condition may fail 
 if the parameter $\lambda$ wanders outside the \lq\lq{good range}\rq\rq.  
Although our parameter set $A_+(p,q)$ contains
 finitely many $\lambda$
 that are outside the good range, 
 the \gk-module 
 ${\mathcal{R}}_{\mathfrak{q}}^{p-2}({\mathbb{C}}_{\lambda f_1})$
 is nonzero and irreducible
 for all $\lambda \in A_+(p,q)$, 
 see \cite[Thm.~3]{xk92} applied to $r=1$
 with the notation therein.  

\subsection{Geometric quantization of elliptic orbits}
\label{subsec:Dolbeault}
Any coadjoint orbit of a Lie group carries 
 a natural symplectic structure.   
We shall see
 that the irreducible unitary representation $\pi_{+, \lambda}^{p,q}$ of $G$
 may be regarded as a \lq\lq{geometric quantization}\rq\rq\ 
 of the minimal elliptic coadjoint orbit
\[
  {\mathcal{O}}_{\nu}\equiv {\mathcal{O}}_{+, \nu} := \Ad^{\ast}(G)(\nu f_1)
  \,\,
  (\subset \sqrt{-1} {\mathfrak{g}}^{\ast}), 
\]
 where $\lambda = \nu-\rho$
 if we adopt the normalization
 of the parameter for \lq\lq{quantization}\rq\rq\
 as in \cite{xrons}, see below.

As a homogeneous space,
 ${\mathcal{O}}_{\nu}$ ($\nu \ne 0$) is identified with the homogeneous space
 $G/L$
 where $L$ is the subgroup defined in \eqref{eqn:Levi}.  
Since the same homogeneous space $G/L$ arises an open $G$-orbit 
of the complex flag variety $G_{\mathbb{C}}/Q$
 where $Q$ is the complex parabolic subgroup 
 with Lie algebra ${\mathfrak{q}}$
 (Section \ref{subsec:Aq})
 of the complexified Lie group $G_{\mathbb{C}}$, 
 it carries a $G$-invariant complex structure.  
Moreover, 
 it admits a $G$-invariant indefinite K{\"a}hler metric 
 such that its imaginary part yields the Kostant--Kirillov--Souriau symplectic form.

For $\nu \in {\mathbb{Z}}$, 
 we form a homogeneous line bundle 
 ${\mathcal{L}}_\nu:=G \times_L {\mathbb{C}}_{\nu f_1}$ over $G/L$.  
For instance, 
 the canonical bundle of $G/L$ is expressed as
 ${\mathcal{L}}_{2\rho} = {\mathcal{L}}_{p+q-2}$.  
For $\lambda \in {\mathbb{Z}} + \rho$ with $\lambda \ne 0$, 
 we take the Dolbeault cohomologies
 for the $G$-equivariant holomorphic line bundle
\[
   {\mathcal{L}}_{\lambda+\rho} \to {\mathcal{O}}_{\lambda} \simeq G/L, 
\]
 which carry a natural Fr{\'e}chet topology
 by the closed range theorem 
 of the $\overline \partial$-operator 
due to Schmid and Wong 
 \cite{xwong}, 
 and the Fr{\'e}chet $G$-module 
\[
   H_{\overline \partial}^{j}(G/L, {\mathcal{L}}_{\lambda+\rho})
\]
is a maximal globalization of the \gk-module
 ${\mathcal{R}}_{\mathfrak{q}}^{j}({\mathbb{C}}_{\lambda f_1})$.  
This shows the \gk-modules
 in (i) and (ii) in Theorem \ref{thm:2002}
 are isomorphic to each other.  
If $\lambda \in A_+(p,q)$, 
 then the Dolbeault cohomology for $j=p-2$ contains a Hilbert space
 on which $G$ acts as the unitary representation $\pi_{+,\lambda}^{p,q}$.

For $q \ge 2$, 
 we can consider similar family 
 of minimal elliptic coadjoint orbits 
${\mathcal{O}}_{-,\lambda} \simeq G/L_-$
 with $L_-:=O(p,q-2) \times SO(2)$
 by switching the role of $p$ and $q$, 
 and we obtain an irreducible unitary representations 
 $\pi_{-,\lambda}^{p,q}$ for $\lambda \in A_-(p,q)$
 ($=A_+(q,p)$).

The irreducible unitary representations $\pi_{\varepsilon,\lambda}^{p,q}$
 of $G$
 may be interpreted
 as geometric quantization
 of the coadjoint orbits ${\mathcal{O}}_{\varepsilon,\lambda}$, 
 and the Gelfand--Kirillov dimension is given by 
\[
   \operatorname{DIM} \pi_{\varepsilon, \lambda}^{p,q}
   =
   \frac 1 2 \dim {\mathcal{O}}_{\varepsilon, \lambda}
   =
   p+q-2
  \quad
  \text{for $\varepsilon=\pm$}.  
\]

\subsection{Degenerate principal series representations}
\label{subsec:ps}

The indefinite orthogonal group $G=O(p,q)$ has
 a maximal (real) parabolic subgroup 
 $P=M A N$, 
 unique up to conjugation,
 with Levi factor
\[
   M A \simeq GL (1,{\mathbb{R}}) \times O(p-1,q-1).  
\]
Any one-dimensional representation of the first factor $GL (1,{\mathbb{R}})$
 is parametrized by 
 $(\varepsilon,\nu) \in \{\pm \} \times {\mathbb{C}}$, 
 which extends to a character $\chi_{\varepsilon,\nu}$ of $M A$
 by letting the second factor trivial.  
We denote by $I_{\varepsilon}(\nu)$
 the $G$-module
 obtained as unnormalized parabolic induction
 $\operatorname{Ind}_P^G(\chi_{\varepsilon,\nu})$.  
Our parameter $\nu$ is chosen in a way
 that the trivial one-dimensional representation ${\bf{1}}$ of $G$
 occurs as the subrepresentation of $I_+(0)$, 
 and as the quotient of $I_+(2\rho)=I_+(p+q-2)$.

Geometrically,
 the real flag variety $G/P$ has a $G$-equivariant double covering
\begin{equation}
\label{eqn:SSGP}
  S^{p-1} \times S^{q-1} \simeq G/ P_+ \to G/P
\end{equation}
where $P_+ = (G L (1,{\mathbb{R}})_+ \times O(p-1,q-1))N$ is a normal subgroup 
 of $P$ of index two, 
 and the group $G$ acts conformally on $S^{p-1} \times S^{q-1}$
 endowed with the pseudo-Riemannian metric
 $g_{S^{p-1}} \oplus (-g_{S^{q-1}})$.

We recall that ${\mathcal{H}}^m({\mathbb{R}}^p)$ denotes
 the space of spherical harmonics
 of degree $m$.  
For $p=1$, 
 we consider only $m=0$ and $1$.  
The orthogonal group $O(p)$ acts irreducibly on ${\mathcal{H}}^m({\mathbb{R}}^p)$, 
 and we shall use the same letter 
 to denote the resulting representation.

For $b \in {\mathbb{Z}}$, 
 we define the following infinite-dimensional $K$-module:
\begin{equation}
\label{eqn:Xib}
\Xi(K,b):=
\bigoplus_{\substack{m,n \in {\mathbb{N}} \\[1pt] m-n \in 2{\mathbb{N}}+b}} 
 {\mathcal{H}}^m({\mathbb{R}}^p) \boxtimes {\mathcal{H}}^n({\mathbb{R}}^q)
\quad
\text{(algebraic direct sum).}  
\end{equation}
We recall from Howe--Tan \cite{xhowetan}:
\begin{proposition}
Suppose $\lambda \in A_+(p,q)$.  
Let $b$ and $\varepsilon$ be as in \eqref{eqn:b} and \eqref{eqn:e}.  
\begin{enumerate}
\item[{\rm{(1)}}]
There is a unique irreducible submodule of $I_{\varepsilon}(\lambda+\rho)$
 with $K$-types $\Xi(K,b)$.  
\item[{\rm{(2)}}]
There is a unique irreducible quotient of $I_{\varepsilon}(-\lambda+\rho)$
 with $K$-types $\Xi(K,b)$.  
\item[{\rm{(3)}}]
These two modules are isomorphic to each other.  
\end{enumerate}
\end{proposition}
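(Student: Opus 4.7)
The plan is a $K$-type analysis in the spirit of Howe--Tan \cite{xhowetan}. First I compute the $K$-spectrum of $I_{\varepsilon}(\nu)$ by Frobenius reciprocity in the compact picture on the double cover $G/P_+ \simeq S^{p-1} \times S^{q-1}$ of \eqref{eqn:SSGP}. Using the standard decomposition $L^2(S^{p-1} \times S^{q-1}) = \bigoplus_{m,n \ge 0} \mathcal{H}^m(\mathbb{R}^p) \boxtimes \mathcal{H}^n(\mathbb{R}^q)$ together with the observation that the nontrivial element of $P/P_+$ acts by $\varepsilon = (-1)^b$, one obtains
\begin{equation*}
I_{\varepsilon}(\nu)|_K \;\simeq\; \bigoplus_{\substack{m,n \ge 0 \\[2pt] m - n - b \,\in\, 2\mathbb{Z}}} \mathcal{H}^m(\mathbb{R}^p) \boxtimes \mathcal{H}^n(\mathbb{R}^q),
\end{equation*}
independent of $\nu$ and multiplicity-free. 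Thus $\Xi(K,b)$ is precisely the half-plane $m - n \ge b$ of the $K$-spectrum, cut out by an inequality on top of the parity constraint that comes from $\varepsilon$.

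Next I analyze the $(\mathfrak{g},K)$-structure by means of four $K$-equivariant shift operators obtained from the action of $\mathfrak{p}$, whose $K$-module structure is $\mathbb{R}^p \otimes \mathbb{R}^q$, combined with the Pieri-type rule $\mathcal{H}^m \otimes \mathbb{R}^p \simeq \mathcal{H}^{m+1} \oplus \mathcal{H}^{m-1}$. Each shift takes a $K$-type $\mathcal{H}^m \boxtimes \mathcal{H}^n$ to one of its neighbors $\mathcal{H}^{m \pm 1} \boxtimes \mathcal{H}^{n \pm 1}$, and by Schur's lemma is a scalar multiple of the canonical $K$-intertwiner. The scalar is a degree-one polynomial in $(m, n, \nu)$ that one pins down by evaluating on a standard vector and matching Casimir eigenvalues. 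At $\nu = \lambda + \rho$ with $\lambda \in A_+(p,q)$, the scalar attached to the unique $m-n$-decreasing shift $(m, n) \mapsto (m-1, n+1)$ vanishes precisely along the line $m - n = b$, while the other three scalars remain nonzero at every $K$-type of $\Xi(K, b)$. Therefore $\Xi(K, b)$ is $\mathfrak{g}$-stable, and because the three nonvanishing shifts connect any two $K$-types inside $\Xi(K, b)$, the resulting submodule is irreducible. Uniqueness is forced by the multiplicity-one property, proving (1).

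Part (2) follows from (1) by Hermitian duality: the isomorphism $I_{\varepsilon}(\nu)^{\vee} \simeq I_{\varepsilon}(2\rho - \nu)$ exchanges $\lambda + \rho$ with $-\lambda + \rho$ and interchanges irreducible submodules with irreducible quotients, while preserving the $K$-spectrum (which is self-dual, since every irreducible representation of $O(p) \times O(q)$ is self-dual). For (3), I invoke the meromorphically continued Knapp--Stein intertwining operator
\begin{equation*}
T_{\lambda} \colon I_{\varepsilon}(-\lambda + \rho) \longrightarrow I_{\varepsilon}(\lambda + \rho),
\end{equation*}
which is well-defined and nonzero at $\lambda \in A_+(p,q)$. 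The image of $T_{\lambda}$ is a nonzero $G$-submodule of the target, hence contains the unique irreducible submodule furnished by (1); the kernel of $T_{\lambda}$ is a proper submodule of the source, hence is contained in the unique maximal proper submodule furnished by (2). Matching $K$-types on both sides — both surviving composition factors have $K$-spectrum $\Xi(K, b)$ — forces $T_{\lambda}$ to induce an isomorphism of the irreducible quotient from (2) onto the irreducible submodule from (1).

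The main obstacle is the explicit scalar computation for the four shift operators, namely the verification that at the parameter $\nu = \lambda + \rho$ it is precisely the $(m, n) \mapsto (m-1, n+1)$ scalar that vanishes on the line $m - n = b$, while the other three remain nonvanishing on $\Xi(K, b)$. This step is routine but bookkeeping-intensive; once it is carried out, as in \cite{xhowetan}, the remaining structural claims follow essentially formally.
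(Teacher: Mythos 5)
The paper offers no proof of this Proposition at all --- it is quoted verbatim from Howe--Tan \cite{xhowetan} --- so the only fair comparison is with that reference, and your strategy (compact picture on $S^{p-1}\times S^{q-1}$ to get the multiplicity-free $K$-spectrum with the parity constraint, then the four $\mathfrak{p}$-shift operators via the Pieri rule and the vanishing of transition scalars along $m-n=b$) is exactly the Howe--Tan route. The central assertion --- that at $\nu=\lambda+\rho$ the only vanishing inside the parity class occurs for the $(m,n)\mapsto(m-1,n+1)$ shift on the line $m-n=b$, and that no other barrier (in particular none along lines $m+n=\mathrm{const}$, which is how finite-dimensional constituents arise at these integral parameters) cuts through the wedge $\Xi(K,b)$ --- is precisely the content of the cited result; you state it but defer the computation, which is acceptable here since the paper does the same, but be aware that this is where all the work lives, including the parameters $\lambda\in A_+(p,q)$ outside the good range and the degenerate cases $b\le 0$, $q\le 1$.

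There is, however, a genuine gap in your argument for (3). Parts (1) and (2) assert uniqueness \emph{among constituents with $K$-types $\Xi(K,b)$}; they do not say that $I_{\varepsilon}(\lambda+\rho)$ has irreducible socle or that $I_{\varepsilon}(-\lambda+\rho)$ has irreducible cosocle. So the inference \lq\lq the image of $T_{\lambda}$ is a nonzero submodule, hence contains the irreducible submodule from (1)\rq\rq\ is a non sequitur: when $p+q$ is even the opposite-wedge constituent (with $K$-types $\mathcal{H}^m\boxtimes\mathcal{H}^n$, $n-m\ge \lambda+\frac{p}{2}-\frac{q}{2}+1$, related to $\pi^{p,q}_{-,\lambda}$) has the same parity $\varepsilon$ and can occur as a submodule of the same induced module, so a nonzero intertwining image could a priori miss $\Xi(K,b)$ altogether; the dual statement about the kernel has the same defect. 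Moreover the Knapp--Stein operator must be shown to be regular (or suitably normalized) at these integral parameters. To close the gap you need one of: (a) the explicit eigenvalue of the standard intertwining operator on each $K$-type (a ratio of Gamma factors in $m\pm n$ and $\nu$), checked to be finite and nonzero on $\Xi(K,b)$ at $\nu=-\lambda+\rho$; (b) a reduction of (3) to self-duality of the wedge constituent, since by your duality step the irreducible quotient in (2) is the contragredient of the irreducible submodule in (1); or (c) identification of both modules with a common explicit realization, e.g.\ the $K$-finite vectors of $L^2(X(p,q))_{\lambda}$, which is in effect how the equivalences in Definition-Theorem \ref{def:pilmd} are established in \cite{xk92, opq2}.
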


\subsection{Discrete series for semisimple symmetric spaces}
\label{subsec:Xpq}

We equip ${\mathbb{R}}^{p+q}$ with the standard pseudo-Riemannian structure
\[
  g_{{\mathbb{R}}^{p,q}}
  :=
  dx_1^2 + \cdots + dx_p^2- dy_1^2-\cdots -dy_q^2.  
\]
Then $g_{{\mathbb{R}}^{p,q}}$ is nondegenerate
 on the following hypersurface
\[
   X(p,q) \equiv X(p,q)_+
   := \{(x,y) \in {\mathbb{R}}^{p+q}:
        |x|^2-|y|^2=1\}, 
\]
yielding a pseudo-Riemannian structure $g_{X(p,q)}$
 of signature $(p-1,q)$
 with constant sectional curvature $+1$, 
 sometimes referred to as a {\it{pseudo-Riemannian space form}}
 of positive curvature.  
We also set 
\[
   X(p,q)_-
   := \{(x,y) \in {\mathbb{R}}^{p+q}:
        |x|^2-|y|^2=-1\}.  
\]
Then $X(p,q)_-$ has a pseudo-Riemannian structure of signature $(p,q-1)$.  
There is a natural isomorphism
 (reversing the signature of the pseudo-Riemannian metric):
\[X(p,q)_- \simeq X(q,p)_+.  \]
Then $X(p,q)$ is a sphere $S^{p-1}$ if $q=0$,
 a hyperbolic space 
if $p=1$, 
 de Sitter manifold if $p=2$, 
and anti-de Sitter manifold if $q=1$.  
We note $X(0,q) = \emptyset$.

The group $G=O(p,q)$ acts isometrically
 and transitively on $X(p,q)_{\pm}$, 
 and we have $G$-diffeomorphims:
\[
  X(p,q)_+ \simeq O(p,q)/O(p-1,q),
  \quad
  X(p,q)_- \simeq O(p,q)/O(p,q-1).  
\]

The pseudo-Riemannian metric $g_{X(p,q)}$ induces the Radon measure, 
 and the Laplace--Beltrami operator $\Delta \equiv \Delta_{X(p,q)}$
 on $X(p,q)$.

For $\lambda \in {\mathbb{C}}$, 
 we consider a differential equation on $X(p,q)$:
\begin{equation}
\label{eqn:Laplmd}
\Delta_{X(p,q)} f =(-\lambda^2+\rho^2)f
\end{equation}
 where $\rho=\frac 1 2(p+q-2)$, 
 and set 
\begin{align*}
C^{\infty}(X(p,q))_{\lambda}
 :=&
 \{f \in C^{\infty}(X(p,q))
  :
  \text{$f$ satisfies \eqref{eqn:Laplmd} in the usual sense}\}, 
\\
L^2(X(p,q))_{\lambda}
 :=&
 \{f \in L^2(X(p,q))
  :
  \text{$f$ satisfies \eqref{eqn:Laplmd} in the distribution sense}\}.  
\end{align*}

\begin{proposition}
[Faraut \cite{xfar}, Strichartz \cite{xstri}]
\label{prop:discX}
$L^2(X(p,q))_{\lambda} \ne \{0\}$ 
 if and only if $\lambda \in A_+(p,q)$.  
\end{proposition}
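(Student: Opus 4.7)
My plan is to decompose the eigenvalue problem on $X(p,q)_+$ into ordinary differential equations indexed by the $K$-isotypes under $K=O(p)\times O(q)$, and then determine when their solutions are simultaneously regular at the origin and square-integrable at infinity. Throughout I assume $p\ge 2$ and $q\ge 1$; the boundary cases $q=0$ (where $X(p,0)=S^{p-1}$ and the claim reduces to the classical spherical spectral decomposition) and $p=1$ (where $X(1,q)$ is a Riemannian hyperbolic space, whose Laplacian has purely continuous spectrum) are handled separately. Since the equation depends only on $\lambda^2$, I may assume $\operatorname{Re}\lambda\ge 0$.

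I parametrize the open dense subset $\{y\ne 0\}\subset X(p,q)_+$ by bipolar coordinates $(t,\omega,\eta)\mapsto(\cosh t\cdot\omega,\sinh t\cdot\eta)$ with $t>0$, $\omega\in S^{p-1}$, $\eta\in S^{q-1}$. A direct computation yields
\[
  g_{X(p,q)} = -dt^2 + \cosh^2 t\, g_{S^{p-1}} - \sinh^2 t\, g_{S^{q-1}},
\]
the invariant measure $\cosh^{p-1}t\,\sinh^{q-1}t\,dt\,d\omega\,d\eta$, and the Laplace--Beltrami operator
\[
  \Delta = -\partial_t^2 - \bigl((p-1)\tanh t + (q-1)\coth t\bigr)\partial_t + \frac{\Delta_{S^{p-1}}}{\cosh^2 t} - \frac{\Delta_{S^{q-1}}}{\sinh^2 t}.
\]
The Peter--Weyl decomposition $L^2(S^{p-1}\times S^{q-1})\simeq\bigoplus_{m,n\in\mathbb{N}}\mathcal H^m(\mathbb R^p)\boxtimes\mathcal H^n(\mathbb R^q)$ (Hilbert direct sum), on which $\Delta_{S^{p-1}}$ and $\Delta_{S^{q-1}}$ act by $-m(m+p-2)$ and $-n(n+q-2)$, reduces the eigenequation $\Delta f=(\rho^2-\lambda^2)f$ on each $(m,n)$-isotype to a second-order ODE in $t$ which the substitution $z=-\sinh^2 t$ converts into Gauss's hypergeometric equation, with parameters depending explicitly on $(m,n,\lambda,p,q)$.

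The crux is the boundary analysis. Near $t=0$ the two local solutions behave like $t^n$ and $t^{2-q-n}$, and integration against $\sinh^{q-1}t\,dt$ isolates the unique (up to scalar) regular solution for $L^2$-integrability there. Near $t=+\infty$ the indicial exponents of the radial equation are $-\rho\pm\lambda$, and since the measure grows like $2^{-2\rho}e^{2\rho t}\,dt$, $L^2$-integrability forces the asymptotic coefficient of the slower-decaying exponential to vanish. A nonzero element of $L^2(X(p,q))_\lambda$ inside the $(m,n)$-isotype therefore exists precisely when the solution regular at $t=0$ already decays at rate $e^{-(\rho+\operatorname{Re}\lambda)t}$ at infinity. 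The hypergeometric connection formula expresses this vanishing as a concrete identity involving a ratio of Gamma functions in $(m,n,\lambda,p,q)$.

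The main step, and the principal obstacle, is to identify the union of the resulting arithmetic progressions in $\lambda$, taken over $(m,n)\in\mathbb N\times\mathbb N$, with $A_+(p,q)=\{\lambda\in\mathbb Z+\tfrac{p+q}{2}:\lambda>0\}$, and to exhibit an explicit nonzero $L^2$-eigenfunction for each such $\lambda$. I expect the Gamma-pole condition to factor so as to force $\lambda$ simultaneously into the half-lattice $\mathbb Z+\tfrac{p+q}{2}$ and into the positive half-line, and to be realized by infinitely many $(m,n)$ for each such $\lambda$, consistent with the $K$-type description in Definition-Theorem \ref{def:pilmd}. Conversely, for $\lambda\notin A_+(p,q)$ the Gamma-ratio is nonzero on every isotype, ruling out any $L^2$-eigenfunction and completing the proof.
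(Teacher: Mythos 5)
Your overall strategy is sound, and in fact it is the strategy of the references the paper itself relies on: the paper does not prove Proposition \ref{prop:discX} internally but cites Faraut and Strichartz, whose arguments proceed exactly as you outline (bipolar coordinates, reduction to a radial hypergeometric equation on each $K$-isotype $\mathcal H^m(\mathbb R^p)\boxtimes\mathcal H^n(\mathbb R^q)$, and an $L^2$-analysis at $t=0$ and $t=\infty$ via the connection formula). The problem is that your write-up stops exactly where the proposition begins. The entire content of the statement is the identification of the set of $\lambda$ for which some isotype carries a solution that is both admissible at $t=0$ and $L^2$ at infinity with $A_+(p,q)=\{\lambda\in\mathbb Z+\tfrac{p+q}{2}:\lambda>0\}$, and you only announce this (``I expect the Gamma-pole condition to factor\dots''). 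To close it you must actually write the connection coefficient — it has the shape of $b(\lambda',\lambda'',\lambda)$ in \eqref{eqn:1914110} of Lemma \ref{lem:Kummer}, a ratio of Gamma functions — and check that its vanishing for the $(m,n)$-isotype is the condition $m-n-b\in 2\mathbb N$ with $b=\lambda-\tfrac p2+\tfrac q2+1$, so that the union over $(m,n)\in\mathbb N^2$ is exactly $\lambda\in\mathbb Z+\tfrac{p+q}{2}$, $\lambda>0$ (consistently with the $K$-type set $\Xi(K,b)$ in \eqref{eqn:Xib}); both the existence direction (for each $\lambda\in A_+(p,q)$ exhibit one such $(m,n)$ and verify global square-integrability) and the exclusion direction (for every other $\lambda$, including $\operatorname{Re}\lambda=0$ where \emph{neither} radial solution is $L^2$ at infinity since $|e^{(-\rho\pm i\nu)t}|^2e^{2\rho t}$ is bounded below, and $\lambda=0$ with its logarithmic solution) must be carried out. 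Without this computation nothing distinguishes $A_+(p,q)$ from any other arithmetic set.

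There is also a concrete step that fails as stated: the claim that ``integration against $\sinh^{q-1}t\,dt$ isolates the unique regular solution'' at $t=0$. The singular exponent $t^{2-q-n}$ gives a locally square-integrable function whenever $q+2n<4$ (e.g.\ $q=1$ with $n=0,1$; $q=2,n=0$, where the second solution is logarithmic; $q=3,n=0$), so near the stratum $\{y=0\}\simeq S^{p-1}$ (codimension $q$) square-integrability alone does not select the regular branch. In those isotypes you must instead use that $f$ is required to satisfy \eqref{eqn:Laplmd} in the distribution sense \emph{across} $\{y=0\}$ (the singular branch produces non-integrable or delta-type terms, in the spirit of the boundary analysis in Section \ref{sec:5}, cf.\ Lemma \ref{lem:191496}), or equivalently use $K$-equivariant smoothness matching $t^n$ with the degree-$n$ harmonic. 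Fixing this point and carrying out the Gamma-factor computation would turn your outline into a complete proof along the lines of the cited references.
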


The group $G=O(p,q)$ acts on $L^2(X(p,q))_{\lambda}$
 as an irreducible unitary representation.  
Moreover,
 if $f \in L^2(X(p,q))_{\lambda}$ is $K$-finite, 
 then there is an analytic function $a \in C^{\infty}(S^{p-1} \times S^{q-1})$
 such that 
\begin{equation}
\label{eqn:L2asym}
   f(\omega \cosh s, \eta \sinh s) 
   = 
   a (\omega, \eta) e^{-(\lambda+\rho)s}
\, 
   (1+s e^{-2s}O(1))
\quad
 \text{as $s \to \infty$.}
\end{equation}

\section{General scheme}
Our approach to the branching laws (Theorem \ref{thm:2002})
 is to use analysis on $G'$-orbits 
 in the reductive symmetric space $G/H$, 
 as developed in \cite{xkInvent94, xkdisc} among others.  
In our setting, 
 $G/H\simeq X(p,q)$ admits principal orbits of the subgroup $G'$ 
 (see \cite[Sect.~8.2]{xkdisc}), 
 hence all the discrete spectrum 
 in the branching law $\Pi|_{G'}$ can be captured
 though the analysis 
 on principal $G'$-orbits, 
 as formulated in Proposition \ref{prop:3.1.1} below.  

\subsection{Principal $G'$-orbits in $X(p,q)$}
\label{subsec:orbits}
We introduce a $G'$-invariant function in the ambient space
 $\Bbb R^{p+q} =\Bbb R^{p'+p'' + q'+q''}$ by
\begin{equation}
\label{eqn:level}
 \mu \colon \Bbb R^{p'+p'' + q'+q''} \to \Bbb R, 
    \ (u', u'', v', v'') \mapsto |u'|^2-|v'|^2.
\end{equation}
If $(u', u'', v', v'') \in X(p,q)$, 
 then
$$
    \mu(u', u'', v', v'') = |u'|^2-|v'|^2 = -|u''|^2 + |v''|^2 +1.
$$
We define three $G'$-invariant open sets 
 $X(p,q)_{\delta\varepsilon}$ of $X(p,q)$ by
\begin{align*}
 X(p,q)_\amp &:= 
         X(p,q) \cap \mu^{-1}(\set{s\in \Bbb R}{s<0}), 
\\
 X(p,q)_\app &:= 
         X(p,q) \cap \mu^{-1}(\set{s\in \Bbb R}{0<s<1}), 
\\
 X(p,q)_\apm &:=  
         X(p,q) \cap \mu^{-1}(\set{s\in \Bbb R}{1<s}).  
\end{align*}

Then the disjoint union 
\begin{equation}
\label{eqn:3union}
   X(p,q)_\amp \amalg X(p,q)_\app \amalg X(p,q)_\apm
\end{equation}
 is conull in $X(p,q)$.
Accordingly,
 we have a direct sum decomposition of the Hilbert space:
\begin{equation}
\label{eqn:2.2.1}
    L^2(X(p,q)) = L^2(X(p,q)_\amp)
                  \oplus L^2(X(p,q)_\app) \oplus L^2(X(p,q)_\apm),
\end{equation}
 which is stable by the action of $G'$.
We shall see in \eqref{eqn:mapmp}--\eqref{eqn:mappm}
 that the isomorphism classes
 of the isotropy subgroups 
 of the subgroup $G'$ at points 
 in $X(p,q)_{\delta\varepsilon}$ are determined uniquely
 by $(\delta, \varepsilon)$.

\subsection{A priori estimate of $\operatorname{Disc}(\Pi|_{G'})$}
By using the general theory \cite{xkdisc}, 
 we explain the three families
 of irreducible representations of $G'$ occurring 
 in the branching law $\Pi|_{G'}$
 (Theorem \ref{thm:2002})
 arise from the decomposition \eqref{eqn:3union}.
\begin{proposition}
\label{prop:3.1.1}
For $\lambda \in A_+(p,q)$, 
 we set $\Pi:=\pip{p}{q}{\lambda} \in \widehat G$
 as in Definition-Theorem \ref{def:pilmd}.  
If
 $\pi \in \widehat{G'}$ satisfies
 $\Hom_{G'}(\pi, \Pi|_{G'}) \neq \{0\}$,
 then there exist uniquely
 $(\delta', \delta'') \in \{\amp, \app, \apm \}$
 and 
 $(\lambda', \lambda'') \in A_{\delta'}(p', q')\times A_{\delta''}(p'', q'')$
 such that
\begin{equation}
\label{eqn:pikappa}
\pi \simeq
          \pia{\delta'}{p'}{q'}{\lambda'} \boxtimes
          \pia{\delta''}{p''}{q''}{\lambda''}.
\end{equation}
Moreover the following parity condition holds:
\begin{equation}
\label{eqn:parity}
   \delta' \lambda' + \delta'' \lambda''- \lambda
   \in 
   2 {\mathbb{Z}}+1.  
\end{equation}
\end{proposition}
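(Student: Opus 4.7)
The plan is to use realization (iv) of Definition-Theorem~\ref{def:pilmd} to identify $\Pi = \pip{p}{q}{\lambda}$ concretely with $L^2(X(p,q))_\lambda$. The $G'$-invariant level function $\mu$ of \eqref{eqn:level} partitions $X(p,q)$ into the three conull open pieces $X(p,q)_{\delta\varepsilon}$ of Section~\ref{subsec:orbits}, giving the orthogonal, $G'$-invariant decomposition~\eqref{eqn:2.2.1}. The first step is to exhibit, for each $(\delta,\varepsilon) \in \{\amp, \app, \apm\}$, a $G'$-equivariant diffeomorphism
\[
   X(p,q)_{\delta\varepsilon}
   \;\cong\;
   I_{\delta\varepsilon} \times X(p',q')_\delta \times X(p'',q'')_\varepsilon,
\]
with $I_\amp=(-\infty,0)$, $I_\app=(0,1)$, $I_\apm=(1,\infty)$, obtained by rescaling $(u',v')$ by $|\mu|^{1/2}$ and $(u'',v'')$ by $|1-\mu|^{1/2}$. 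The absence of a $(-,-)$ piece is geometric: on $X(p,q)$ the identity $\mu = |u'|^2-|v'|^2 = -|u''|^2+|v''|^2+1$ precludes $\mu<0$ together with $1-\mu<0$. Up to a nonvanishing Jacobian depending only on $\mu$, the pseudo-Riemannian measure decomposes as a product, giving a unitary $G'$-isomorphism
\[
   L^2(X(p,q)_{\delta\varepsilon})
   \;\simeq\;
   L^2(I_{\delta\varepsilon}) \otimes L^2(X(p',q')_\delta) \otimes L^2(X(p'',q'')_\varepsilon)
\]
in which the first factor is $G'$-trivial.

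Next, let $V_\pi \subset L^2(X(p,q))_\lambda$ be a closed irreducible $G'$-subspace of type $\pi$, which exists by hypothesis. Since $V_\pi \subset L^2(X(p,q))$, the representation $\pi$ occurs as a discrete summand of $L^2(X(p,q))|_{G'}$; projecting onto the three pieces of \eqref{eqn:2.2.1} and invoking irreducibility of $V_\pi$, at least one projection $P_{\delta\varepsilon}|_{V_\pi}$ must be injective, so $\pi$ appears as a discrete summand of $L^2(X(p,q)_{\delta\varepsilon})$ for some $(\delta,\varepsilon)$. Because $L^2(I_{\delta\varepsilon})$ is a $G'$-trivial factor, the $\pi$-isotypic component of $L^2(X(p,q)_{\delta\varepsilon})$ equals $L^2(I_{\delta\varepsilon}) \otimes M$, with $M$ the $\pi$-isotypic component of $L^2(X(p',q')_\delta) \otimes L^2(X(p'',q'')_\varepsilon)$, so $M \ne \{0\}$. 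Writing $\pi \simeq \sigma_1 \boxtimes \sigma_2$ as an irreducible representation of $O(p',q')\times O(p'',q'')$ (possible by type-I theory), the tensor product decomposition forces both $\sigma_1$ to be a discrete summand of $L^2(X(p',q')_\delta)$ and $\sigma_2$ to be a discrete summand of $L^2(X(p'',q'')_\varepsilon)$. By Proposition~\ref{prop:discX} these are exactly $\pia{\delta}{p'}{q'}{\lambda'}$ and $\pia{\varepsilon}{p''}{q''}{\lambda''}$ with $\lambda' \in A_\delta(p',q')$ and $\lambda'' \in A_\varepsilon(p'',q'')$; this gives \eqref{eqn:pikappa}, and uniqueness of $(\delta,\varepsilon,\lambda',\lambda'')$ follows from Proposition~\ref{prop:pilmd}.

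Finally, the parity \eqref{eqn:parity} is obtained by equating the scalar action of the central element $-I_{p+q} \in Z(G)\cap G'$ on both sides of the embedding $\pi \hookrightarrow \Pi|_{G'}$. Since every $K$-type $\mathcal{H}^m(\mathbb{R}^{p_0}) \boxtimes \mathcal{H}^n(\mathbb{R}^{q_0})$ of $\pia{\varepsilon_0}{p_0}{q_0}{\mu}$ lies in the space $\Xi(K_0, b_\pm)$ with $m-n \equiv b_\pm \pmod 2$, the central character equals $(-1)^{\mu + \varepsilon_0(q_0-p_0)/2 + 1}$; substituting this for $\Pi$ and for the two factors of $\pi$, and using $2\lambda'\equiv p'+q' \pmod 2$ to replace $\lambda'$ by $-\lambda'$ modulo~$2$ when $\delta=-$ (and similarly for $\lambda''$), yields $\delta\lambda'+\varepsilon\lambda''-\lambda \in 2\mathbb{Z}+1$ after elementary simplification. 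The main obstacle I anticipate is in the second step, namely the rigorous reduction to discrete summands on each factor in the presence of the infinite-dimensional trivial tensor factor $L^2(I_{\delta\varepsilon})$; this rests on the essentially standard facts that discrete spectra are unaffected by tensoring with a trivial Hilbert factor, and that for a type-I product group the discrete $\sigma_1\boxtimes\sigma_2$-spectrum of a Hilbert tensor product $\mathcal{H}_1\otimes\mathcal{H}_2$ is the external product of the two discrete spectra.
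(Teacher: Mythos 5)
Your proposal is correct, and its parity argument (central element $-I_{p,q}$ acting through the $K$-types $\Xi(K,b)$) is exactly the one the paper uses. Where you genuinely diverge is the existence/classification part: the paper disposes of it in one line by citing the general theorem \cite[Thm.~8.6]{xkdisc} on discrete spectrum being captured on principal $G'$-orbits, whereas you reprove the needed special case by hand, exploiting the fact that in this setting each $G'$-invariant region is globally a product: $X(p,q)_{\delta\varepsilon}\cong I_{\delta\varepsilon}\times X(p',q')_{\delta}\times X(p'',q'')_{\varepsilon}$ with $G'$-invariant fiber coordinate $\mu$, so that $L^2(X(p,q)_{\delta\varepsilon})$ is a Hilbert tensor product with a $G'$-trivial factor, and a discrete summand $\sigma_1\boxtimes\sigma_2$ forces $\sigma_1$ and $\sigma_2$ to be discrete summands of $L^2(X(p',q')_{\delta})$ and $L^2(X(p'',q'')_{\varepsilon})$ respectively. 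This buys self-containedness and makes transparent why only the three types $(-,+),(+,+),(+,-)$ occur (no $(-,-)$ region exists) and why $(\lambda',\lambda'')\in A_{\delta}(p',q')\times A_{\varepsilon}(p'',q'')$; its cost is that it leans on two facts you should state explicitly: (a) for type-I product groups the discrete spectrum of $H_1\otimes H_2$ is the external product of the discrete spectra (essential uniqueness of the central disintegration), and (b) the exhaustion $\operatorname{Disc}(L^2(X(p_0,q_0)))=\{\pi^{p_0,q_0}_{+,\nu}:\nu\in A_+(p_0,q_0)\}$, i.e.\ that every discrete summand is a Laplacian eigenspace (Faraut--Strichartz; Proposition \ref{prop:discX} as literally stated only classifies the eigenspaces). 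Both facts are standard and are in effect packaged inside the paper's citation, which in addition covers situations where the orbit regions do not have such a clean product structure; your argument is the more elementary one for this specific pair, and the uniqueness step via Proposition \ref{prop:pilmd} coincides with the paper's.
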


\begin{proof}
The existence follows from the general results
 proved in \cite[Thm.8.6]{xkdisc}.  
The uniqueness is clear
 because these irreducible $G'$-modules are mutually inequivalent.  

To show the parity condition \eqref{eqn:parity}, 
 we observe
 that the central element $-I_{p,q}$ of $G$
 acts on $\pi_{\varepsilon,\lambda}^{p,q}$ as a scalar
$
  (-1)^{\lambda-\frac{p-q}{2}\varepsilon+1}, 
$
 as one sees from the equivalent condition (iii)
 in Definition-Theorem \ref{def:pilmd}.  
Since $(-I_{p',q'}) \times (-I_{p'',q''}) \in G'$ is identified
 with $-I_{p,q} \in G$, 
 it follows from the assumption $\Hom_{G'}(\pi, \Pi|_{G'})\ne \{0\}$ that
\[
    (-1)^{\lambda'-\frac{p'-q'}{2}\delta'+1}
    (-1)^{\lambda''-\frac{p''-q''}{2}\delta''+1}
    =
    (-1)^{\lambda-\frac{p-q}{2}+1}.  
\]
Then one obtains \eqref{eqn:parity}
 in view of 
$
  \lambda' \in {\mathbb{Z}}+\frac{p'+q'}{2},
\,\,
  \lambda'' \in {\mathbb{Z}}+\frac{p''+q''}{2}
\,\,
\text{ and }
\,\,
  \lambda \in {\mathbb{Z}}+\frac{p+q}{2}.  
$
\end{proof}

The above proof gives useful geometric information 
 on functions that belong to irreducible components
 of the branching law:
\begin{proposition}
\label{prop:3.1.2}
In the setting of Proposition \ref{prop:3.1.1},
 suppose $\pi \in \widehat{G'}$ satisfies 
 $\Hom_{G'}(\pi, L^2(X(p,q))_\lambda) \ne \{0\}$.  
We set $\kappa:=(\delta',\delta'') \in \{\amp, \app, \apm\}$
 according to \eqref{eqn:pikappa}
 in Proposition \ref{prop:3.1.1}.  
Then we have 
$$
  \operatorname{Supp} F \subset \overline{X(p,q)_{\kappa}}
$$
for any function $F$
 in the image of  $\Hom_{G'}(\pi, L^2(X(p,q))_\lambda)$.  
\end{proposition}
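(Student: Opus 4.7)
The plan is first to establish a $G'$-stable orthogonal decomposition of the eigenspace, and then to show that each summand carries $G'$-representations of only one type among $\amp, \app, \apm$. Concretely, I would argue that the $L^2$ decomposition
$$
L^2(X(p,q))_\lambda = \bigoplus_{\delta\varepsilon \in \{\amp,\app,\apm\}} L^2(X(p,q)_{\delta\varepsilon})_\lambda
$$
holds, where $L^2(X(p,q)_{\delta\varepsilon})_\lambda := L^2(X(p,q)_{\delta\varepsilon}) \cap L^2(X(p,q))_\lambda$ consists of eigenfunctions supported in the closure $\overline{X(p,q)_{\delta\varepsilon}}$. This decomposition is $G'$-stable because each $X(p,q)_{\delta\varepsilon}$ is $G'$-invariant and the union \eqref{eqn:3union} is conull in $X(p,q)$; locality of $\Delta_{X(p,q)}$ is what allows passage to the $\lambda$-eigenspace.

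The heart of the argument is then to identify the $G'$-spectrum of each summand. For this, I would use the $G'$-invariant function $\mu$ of \eqref{eqn:level}: on each region $X(p,q)_{\delta\varepsilon}$, the level sets of $\mu$ are precisely the principal $G'$-orbits, identified via \eqref{eqn:mapmp}--\eqref{eqn:mappm} with (rescaled copies of) products of pseudo-Riemannian space forms. For example, a principal $G'$-orbit in $X(p,q)_\amp$ is diffeomorphic to $X(q',p')_+ \times X(p'',q'')_+$, in $X(p,q)_\apm$ to $X(p',q')_+ \times X(q'',p'')_+$, and analogously a pair of positive-signature forms for $\app$. By separation of variables along $\mu$, the eigenvalue equation $\Delta_{X(p,q)} F = (-\lambda^2+\rho^2)F$ decouples into a one-dimensional Sturm--Liouville problem in $\mu$ coupled with eigenequations on the two space-form factors. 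Invoking Proposition \ref{prop:discX} for each factor, the discrete $L^2$-spectrum of $L^2(X(p,q)_{\delta\varepsilon})_\lambda$ as a $G'$-module consists only of irreducible representations of the form $\pia{\delta_0'}{p'}{q'}{\lambda_0'} \boxtimes \pia{\delta_0''}{p''}{q''}{\lambda_0''}$ with $(\delta_0',\delta_0'')$ equal to the label $(\delta,\varepsilon)$ of the region.

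Finally, given $\pi \simeq \pia{\delta'}{p'}{q'}{\lambda'} \boxtimes \pia{\delta''}{p''}{q''}{\lambda''}$ with $\kappa = (\delta',\delta'')$ as in Proposition \ref{prop:3.1.1}, the mutual inequivalence of these families (Proposition \ref{prop:pilmd}) forces any $G'$-map $\pi \to L^2(X(p,q))_\lambda$ to vanish after composition with the projection onto every summand except the one indexed by $\kappa$. Hence any $F$ in the image lies in $L^2(X(p,q)_\kappa)_\lambda$, so that $\operatorname{Supp} F \subset \overline{X(p,q)_\kappa}$, which is the required conclusion.

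The main obstacle I anticipate is rigorously carrying out the $\mu$-radial decomposition and pinning down the discrete $L^2$-spectrum on each region. This comes down to controlling the boundary behaviour of solutions to the one-dimensional radial problem at $\mu \in \{0, 1, \pm\infty\}$ and verifying that only the matching type of space-form discrete series extends to a global $L^2$ eigenfunction on $X(p,q)_{\delta\varepsilon}$. This technical step is presumably what is carried out in the boundary-behaviour analysis of Section \ref{sec:5} mentioned in the introduction; everything else is a clean orbit-theoretic bookkeeping once this is in hand.
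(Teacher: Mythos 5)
Your overall strategy---split along the three $G'$-stable regions, identify which of the three families can occur discretely over each region, and use mutual inequivalence to kill all but the $\kappa$-component---is the right one, and it is in the spirit of how the paper obtains this proposition (from the orbit decomposition \eqref{eqn:3union}, \eqref{eqn:2.2.1} together with the general theory invoked for Proposition \ref{prop:3.1.1}). However, your first step has a genuine gap: the asserted equality $L^2(X(p,q))_\lambda=\bigoplus_{\delta\varepsilon}\bigl(L^2(X(p,q)_{\delta\varepsilon})\cap L^2(X(p,q))_\lambda\bigr)$ is unjustified, and one should not expect it to hold. Since the three regions are disjoint and conull, such a decomposition would force, for \emph{every} $L^2$ weak solution $F$ of \eqref{eqn:Laplmd}, each cutoff $F\cdot\chi_{X(p,q)_{\delta\varepsilon}}$ to be again a global weak solution. ``Locality of $\Delta$'' does not give this: extending a solution by zero across the boundary hypersurface produces layer-type distributions supported on the wall unless the boundary behaviour is exactly right (compare the one-dimensional model, where $\tfrac{d^2}{dx^2}\bigl(\sin(x+c)\,\chi_{x>0}\bigr)$ contains $\delta$- and $\delta'$-terms for every $c$). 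Deciding precisely when a zero-extension remains a weak solution is the delicate point of the paper (Proposition \ref{prop:weakSol} and Lemma \ref{lem:Heviside}, via the degenerate form \eqref{eqn:Lapxi} of the Laplacian), and your claimed eigenspace decomposition would essentially presuppose the discrete-versus-continuous analysis that Theorem \ref{thm:4.2} settles; in particular it should fail whenever $\Pi|_{G'}$ carries continuous spectrum, which is the generic case when $p'p''\ne 0$. Relatedly, the radial Sturm--Liouville/boundary analysis you flag as the ``main obstacle'' is not what this proposition needs; it belongs to the exhaustion step in Section \ref{sec:5}.

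The good news is that the step you actually need is much weaker, and your argument survives once rerouted. Use the decomposition \eqref{eqn:2.2.1} of the \emph{full} space $L^2(X(p,q))$ (which is trivially valid), and compose a given $T\in\Hom_{G'}(\pi,L^2(X(p,q))_\lambda)\subset\Hom_{G'}(\pi,L^2(X(p,q)))$ with the three orthogonal projections; these are $G'$-maps and there is no need for them to preserve the eigenspace. Via \eqref{eqn:mapmp}--\eqref{eqn:mappm} and \eqref{eqn:measure} one has a $G'$-isomorphism $L^2(X(p,q)_{\delta\varepsilon})\simeq L^2(X(p',q')_{\delta})\,\widehat\otimes\, L^2(X(p'',q'')_{\varepsilon})\,\widehat\otimes\, L^2(I,d\mu_{\delta\varepsilon})$ with $G'$ acting trivially on the last factor, so by Proposition \ref{prop:discX} applied to each factor, the only elements of $\widehat{G'}$ occurring discretely in $L^2(X(p,q)_{\delta\varepsilon})$ are of the matching type $(\delta,\varepsilon)$---no separation of variables in $\mu$ is required. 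Then, since $\pi$ is irreducible of type $\kappa$ and the families are mutually inequivalent (Proposition \ref{prop:pilmd}), the projection of $T(\pi)$ to each summand with label different from $\kappa$ must vanish, which gives $\operatorname{Supp}F\subset\overline{X(p,q)_{\kappa}}$ for every $F$ in the image.
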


\section{Construction of holographic operators}

In this section we construct explicit intertwining operators
 ({\it{holographic operators}}) from 
 irreducible $G'$-modules 
 to irreducible $G$-modules:
\[
T_{\delta\varepsilon,\lambda}^{\lambda',\lambda''}
\colon
\pi_{\delta, \lambda'}^{p',q'} \boxtimes \pi_{\varepsilon, \lambda''}^{p'',q''} 
\to 
\pi_{+, \lambda}^{p,q}|_{G'},  
\]
by using a geometric realization 
 of these representations in the $L^2$-spaces
 of pseudo-Riemannian space forms
 $X(p',q')_{\delta}$, $X(p'',q'')_{\varepsilon}$ and $X(p,q)$,
 as described in Section \ref{subsec:Xpq}.  
Moreover,
 we find a closed formula for the operator norm
 of $T_{\delta\varepsilon,\lambda}^{\lambda',\lambda''}$.  
The main results of this section are stated in Theorem \ref{thm:holographic}.

\subsection{Preliminaries}
To state the quantitative results (Theorem \ref{thm:holographic}), 
 we set 
\begin{align*}
     \V{\apm}{\lambda'}{\lambda''}{\lambda}
     &:=
     \frac{
        (\Gamma(\lambda''+1))^2 
         \ 
         \Gamma(\frac{\lambda' - \lambda''+ \lambda+1}{2})
         \ 
         \Gamma(\frac{\lambda' - \lambda'' - \lambda+1}{2})
          }{
         2 \lambda
         \
         \Gamma(\frac{\lambda' + \lambda''+ \lambda+1}{2})
         \ 
         \Gamma(\frac{\lambda' + \lambda'' - \lambda+1}{2})
         }, 
\\
     \V{\app}{\lambda'}{\lambda''}{\lambda}
     &:=
     \frac{
        (\Gamma(\lambda''+1))^2 
         \ 
         \Gamma(\frac{-\lambda' - \lambda''+ \lambda+1}{2})
         \ 
         \Gamma(\frac{\lambda' - \lambda'' + \lambda+1}{2})
          }{
         2 \lambda
         \
         \Gamma(\frac{-\lambda' + \lambda''+ \lambda+1}{2})
         \ 
         \Gamma(\frac{\lambda' + \lambda'' + \lambda+1}{2})
         }, 
\\
      \V{\amp}{\lambda'}{\lambda''}{\lambda}
      &:=\V{\apm}{\lambda''}{\lambda'}{\lambda}.  
\end{align*}

\begin{lemma}
\label{lem:V}
\begin{enumerate}
\item[{\rm{(1)}}]
$\V {\delta\varepsilon}{\lambda'}{\lambda''}{\lambda}>0$
 if $\lambda>0$, 
 $\lambda', \lambda'' \ge -\frac 1 2$, 
 and $\delta \varepsilon \lambda -\varepsilon\lambda'-\delta\lambda'' >0$.  
Here $\delta \varepsilon \lambda:=\lambda$
 when $\delta=\varepsilon$
 and $-\lambda$ when $\delta \ne \varepsilon$.  
\item[{\rm{(2)}}]
$\V {\delta\varepsilon}{\lambda'}{\lambda''}{\lambda}>0$
 if $(\lambda', \lambda'') \in \Lambda_{\delta \varepsilon}(\lambda)$.  
\end{enumerate}
\end{lemma}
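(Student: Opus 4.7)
The plan is to prove part (1) by direct inspection of the Gamma factors in the defining formulas: once each of the six Gamma arguments in the numerator and denominator is shown to be strictly positive, positivity of $\V{\delta\varepsilon}{\lambda'}{\lambda''}{\lambda}$ follows from $\Gamma>0$ on $(0,\infty)$ together with $1/(2\lambda)>0$. The symmetry $\V{\amp}{\lambda'}{\lambda''}{\lambda}=\V{\apm}{\lambda''}{\lambda'}{\lambda}$ reduces the three cases to the two cases $\apm$ and $\app$.

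In the $\apm$ case the hypothesis reads $\lambda'-\lambda''-\lambda>0$. Each numerator argument is manifestly positive: $\lambda''+1\geq\tfrac12$; $\tfrac{\lambda'-\lambda''-\lambda+1}{2}>\tfrac12$; and $\tfrac{\lambda'-\lambda''+\lambda+1}{2}>\lambda+\tfrac12$. For the denominator, $\tfrac{\lambda'+\lambda''+\lambda+1}{2}\geq\tfrac{\lambda}{2}>0$ is immediate, while the more delicate term is handled by the identity
\[
\lambda'+\lambda''-\lambda+1=(\lambda'-\lambda''-\lambda)+(2\lambda''+1),
\]
whose first summand is strictly positive by hypothesis and whose second is nonnegative since $\lambda''\geq-\tfrac12$. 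The $\app$ case is analogous: with hypothesis $\lambda-\lambda'-\lambda''>0$, the two mixed-sign arguments split as $\lambda'-\lambda''+\lambda+1=(\lambda-\lambda'-\lambda'')+(2\lambda'+1)$ and $-\lambda'+\lambda''+\lambda+1=(\lambda-\lambda'-\lambda'')+(2\lambda''+1)$, both manifestly positive, while the remaining arguments are trivially so.

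For part (2), I would verify that membership $(\lambda',\lambda'')\in\Lambda_{\delta\varepsilon}(\lambda)$ implies all three hypotheses of part~(1). Inspection of \eqref{eqn:A+}--\eqref{eqn:A-} shows every element of $A_\pm(p,q)$ lies in $[-\tfrac12,\infty)$; the hypothesis $\lambda\in A_+(p,q)$ gives $\lambda>0$; and in each of the three cases the parity constraint defining $\Lambda_{\delta\varepsilon}(\lambda)$ (that the relevant combination lies in $2\mathbb{N}+1$, i.e.\ is a positive odd integer) gives $\delta\varepsilon\lambda-\varepsilon\lambda'-\delta\lambda''\geq 1>0$. No genuine obstacle appears: the lemma is a routine verification, the only subtle point being the splitting argument for the denominator term in the $\apm$ case (and its $\amp$ mirror), where none of the hypotheses $\lambda>0$, $\lambda''\geq-\tfrac12$, $\lambda'-\lambda''-\lambda>0$ individually controls the sign.
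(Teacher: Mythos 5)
Your proposal is correct and is exactly the verification the paper leaves implicit: the paper's proof of (1) is simply ``clear from the definition'' and of (2) that it is ``a special case of (1)'' (with Lemma \ref{lem:2.4} mentioned only as an alternative route), so your Gamma-by-Gamma inspection, including the splitting $\lambda'+\lambda''-\lambda+1=(\lambda'-\lambda''-\lambda)+(2\lambda''+1)$ and the reduction of $\amp$ to $\apm$ by the defining symmetry, is precisely the intended argument. Nothing further is needed.
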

\begin{proof}
(1)\enspace
Clear from the definition.  
\itm{(2)}
The second statement is a special case of the first one.  
See also Lemma \ref{lem:2.4} for an alternative proof.  
\end{proof}

\subsection{Jacobi functions and Jacobi polynomials}

Let us consider the differential operator
\begin{equation}
\label{eqn:JacobiL}
L_{\apm}:=
\frac{d^2}{d t^2}
+((2 \lambda'+1) \tanh t+(2 \lambda''+1) \coth t)\frac{d}{d t}.  
\end{equation}
We recall 
 that for $\lambda$, $\lambda'$, $\lambda'' \in {\mathbb{C}}$
 with $\lambda'' \ne -1, -2, \cdots$, 
 the {\it{Jacobi function}} $\varphi_{i \lambda}^{(\lambda'', \lambda')}(t)$
 is the unique even solution 
 to the following differential equation
\begin{equation}
\label{eqn:JacobiODE}
 (L_{\apm}+((\lambda'+\lambda''+1)^2-\lambda^2)) \varphi=0
\end{equation}
such that $\varphi(0)=1$, 
 see Koornwinder \cite{xkoorn}, 
 for instance.  
We note that 
$
   \varphi_{i \lambda}^{(\lambda'', \lambda')}(t)
=
   \varphi_{-i \lambda}^{(\lambda'', \lambda')}(t).  
$
By the change of variables
 $z=-\sinh^2 t$, 
 $g(z):=\varphi(t)$ satisfies
 the hypergeometric differential equation
\begin{equation}
\label{eqn:4.4.6}
\left( 
z(1-z)\frac{\partial^2}{\partial z^2}
+ (c-(a+b+1)z) \frac{\partial}{\partial z}
 - a b \right) 
 g(z) = 0
\end{equation}
with
$$
  a= \frac{\lambda'+\lambda''+1-\lambda}2,\quad
  b= \frac{\lambda'+\lambda''+1+\lambda}2,\quad
  c=\lambda''+1.
$$
The hypergeometric differential equation \eqref{eqn:4.4.6} has
 a regular singularity $z=0$, 
 and its exponents are $0$, $-\lambda''$.
For $\lambda''\ne 0$, 
 we denote by $g_{1(0)}(z)$ and $g_{2(0)}(z)$
 the unique solutions 
 to \eqref{eqn:4.4.6}
 such that
\begin{equation}
\label{eqn:g0}
  g_{1(0)}(0)=1
  \quad
  \text{and}
  \quad
  \lim_{z \to 0}z^{\lambda''} g_{2(0)}(z)=1.  
\end{equation}
We set 
\[
   u_{j(0)}(t):=g_{j(0)}(-\sinh^2 t)
\qquad{\text{for $j=1,2$.}}
\]  
If $\lambda'' \ne 0, -1,-2, \cdots$, 
 then $u_{1(0)}(t)$ is the Jacobi function
 $\varphi_{i \lambda}^{\lambda'', \lambda'} (t)$
(see \eqref{eqn:JacobiF}), 
 and thus we have 
\begin{equation}
\label{eqn:JacobiF}
  \varphi_{i \lambda}^{(\lambda'', \lambda')}(t)
  =
  {}_2F_1\left(
       \tfrac{\lambda'+\lambda''+1-\lambda}2,
       \tfrac{\lambda'+\lambda''+1+\lambda}2;
       \lambda''+1;-\sinh^2 t
     \right), 
\end{equation}
where ${}_2F_1$ is the Gauss hypergeometric function.  
We need the following formul{\ae}
 for the $L^2$-norms
 of the Jacobi functions.  

\begin{lemma}
[{\cite[Lem.~8.2]{opq2}}]
\label{lem:2.4}
Suppose $\lambda >0$.  
\begin{alignat*}{2}
   &\int_0^\infty |\varphi_{i \lambda}^{(\lambda'', \lambda')}(t)|^2
       (\cosh t)^{2 \lambda'+1}
       (\sinh t)^{2 \lambda''+1}
   \ d t
   = V_{\apm,\lambda}^{(\lambda', \lambda'')}
\quad
 &&\text{if $(\lambda', \lambda'') \in \Lambda_{\apm}(\lambda)$.}  
\\
  &\int_0^\frac{\pi}2 |\varphi_{i \lambda}^{(\lambda'', \lambda')}(i \theta)|^2
       (\cos \theta)^{2 \lambda'+1}
       (\sin \theta)^{2 \lambda''+1}
   \ d \theta
   = V_{\app,\lambda}^{(\lambda', \lambda'')} 
\quad
&&\text{if $(\lambda', \lambda'') \in \Lambda_{\app}(\lambda)$.}
\end{alignat*}
\end{lemma}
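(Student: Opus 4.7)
The plan is to reduce each integral to the classical orthogonality integral for Jacobi polynomials, using the fact that the discreteness conditions $(\lambda',\lambda'') \in \Lambda_{\apm}(\lambda)$ (resp.\ $\Lambda_{\app}(\lambda)$) force the Gauss hypergeometric series \eqref{eqn:JacobiF} (possibly after a Pfaff--Euler transformation) to terminate, so that $\varphi_{i\lambda}^{(\lambda'',\lambda')}$ becomes a polynomial in $\sinh^2 t$ (resp.\ in $\sin^2\theta$) times an explicit weight. This is the mechanism by which the continuous Jacobi transform picks up a discrete spectrum at exactly these parameters, and it is also what guarantees square-integrability against the measure $(\cosh t)^{2\lambda'+1}(\sinh t)^{2\lambda''+1}\,dt$ on $(0,\infty)$.

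For the $\Lambda_\app(\lambda)$ case the parameter condition $\lambda-\lambda'-\lambda''-1=2n$, $n\in\mathbb N$, makes the first upper parameter $a=\tfrac{\lambda'+\lambda''+1-\lambda}{2}=-n$, so \eqref{eqn:JacobiF} is already a polynomial of degree $n$ in $-\sinh^2 t$. Setting $t=i\theta$ gives $-\sinh^2(i\theta)=\sin^2\theta$, so $\varphi_{i\lambda}^{(\lambda'',\lambda')}(i\theta)$ is (up to a constant $n!/(\lambda''+1)_n$) the Jacobi polynomial $P_n^{(\lambda'',\lambda')}(\cos 2\theta)$, via the substitution $x=\cos 2\theta$ (equivalently $\tfrac{1-x}{2}=\sin^2\theta$, $\tfrac{1+x}{2}=\cos^2\theta$). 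The measure transforms as $(\cos\theta)^{2\lambda'+1}(\sin\theta)^{2\lambda''+1}\,d\theta=2^{-\lambda'-\lambda''-2}(1+x)^{\lambda'}(1-x)^{\lambda''}\,dx$, and the integral reduces to the standard orthogonality evaluation
\[
\int_{-1}^1 (1-x)^\alpha(1+x)^\beta [P_n^{(\alpha,\beta)}(x)]^2\,dx=\frac{2^{\alpha+\beta+1}\,\Gamma(n+\alpha+1)\Gamma(n+\beta+1)}{(2n+\alpha+\beta+1)\,n!\,\Gamma(n+\alpha+\beta+1)}.
\]
Expressing $n$ in terms of $\lambda,\lambda',\lambda''$ via $n=\tfrac{\lambda-\lambda'-\lambda''-1}{2}$ and repeatedly applying $\Gamma(z+1)=z\Gamma(z)$ to absorb the factor $(2n+\alpha+\beta+1)=\lambda$ and the shifted Pochhammer symbol $(\lambda''+1)_n$ into Gamma functions at the four half-sums in the numerator/denominator of $V_\app$ should match the claimed closed form.

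For the $\Lambda_\apm(\lambda)$ case the condition $\lambda'-\lambda''-\lambda-1=2n$ does not directly terminate the series, so one first applies Euler's transformation
\[
{}_2F_1(a,b;c;z)=(1-z)^{c-a-b}{}_2F_1(c-a,c-b;c;z),
\]
under which $c-a=-n$, $c-b=-n-\lambda$, and $(1-z)^{c-a-b}=(\cosh t)^{2(\lambda''-\lambda')}$. Thus $\varphi_{i\lambda}^{(\lambda'',\lambda')}(t)=(\cosh t)^{2(\lambda''-\lambda')}\,Q_n(\sinh^2 t)$ with $Q_n$ a polynomial of degree $n$ that, up to a constant, is a Jacobi polynomial $P_n^{(\lambda'',-\lambda-\lambda''-1-2n)}(\cosh 2t)$. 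Substituting this into the integral and using $x=1-2\tanh^2 t\in(-1,1]$ (or equivalently $y=\tanh^2 t$) converts the measure on $(0,\infty)$ into a standard Jacobi measure on $(-1,1)$; the exponent $2(\lambda''-\lambda')$ of $\cosh t$ in the prefactor combines with $(\cosh t)^{2\lambda'+1}$ to make the correct $(1+x)^{\beta}$ weight, and orthogonality yields the value $V_\apm^{(\lambda',\lambda'')}$ after the same Gamma-function manipulation.

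The main technical obstacle is purely bookkeeping: the matching of the combinatorial prefactor $\bigl(n!/(\lambda''+1)_n\bigr)^2$ coming from the Jacobi-polynomial normalization, the factor from the Jacobi orthogonality formula, and the change-of-variable Jacobian, must be repackaged using the identity $(\lambda''+1)_n=\Gamma(\lambda''+1+n)/\Gamma(\lambda''+1)$ together with the linear relations among $n,\lambda,\lambda',\lambda''$ to recover the four symmetric Gamma arguments $\tfrac{\pm\lambda'\pm\lambda''\pm\lambda+1}{2}$ that appear in $V_{\delta\varepsilon}$. Positivity of the result (asserted separately in Lemma \ref{lem:V}(2)) is automatic from the integral representation, which also gives an independent verification that $V_{\delta\varepsilon}>0$ on $\Lambda_{\delta\varepsilon}(\lambda)$.
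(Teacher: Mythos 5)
Your treatment of the $\Lambda_{\app}(\lambda)$ case is correct and complete in outline: with $a=\frac{\lambda'+\lambda''+1-\lambda}{2}=-n$ the series terminates, $\varphi_{i\lambda}^{(\lambda'',\lambda')}(i\theta)=\frac{n!}{(\lambda''+1)_n}P_n^{(\lambda'',\lambda')}(\cos 2\theta)$, your measure computation is right, and the orthogonality constant together with $2n+\lambda'+\lambda''+1=\lambda$ does reproduce $V_{\app,\lambda}^{(\lambda',\lambda'')}$. Note that the paper gives no internal proof of Lemma \ref{lem:2.4} (it is quoted from \cite[Lem.~8.2]{opq2}), so a reduction to classical Jacobi-polynomial integrals of this kind is precisely the argument being invoked; there is no competing route in the text to compare against.

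The $\Lambda_{\apm}(\lambda)$ half, however, contains a concrete error and a gap. In Euler's transformation the exponent is $c-a-b=(\lambda''+1)-(\lambda'+\lambda''+1)=-\lambda'$, so the prefactor is $(\cosh t)^{-2\lambda'}$, not $(\cosh t)^{2(\lambda''-\lambda')}$; with your exponent the weight count is off by $(\cosh t)^{4\lambda''}$ and the bookkeeping cannot return $V_{\apm,\lambda}^{(\lambda',\lambda'')}$. Moreover, even with the correct prefactor, the final step is not ``the standard orthogonality evaluation'': the polynomial you obtain is $P_n^{(\lambda'',-\lambda')}(\cosh 2t)$, and $\cosh 2t=\frac{3-x}{1+x}$ is not affine in $x=1-2\tanh^2 t$, so orthogonality on $(-1,1)$ cannot be quoted directly. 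Two ways to close the argument: (a) substitute $u=\cosh 2t$ and evaluate $\int_1^\infty (u-1)^{\lambda''}(u+1)^{-\lambda'}\bigl[P_n^{(\lambda'',-\lambda')}(u)\bigr]^2 du$ on $[1,\infty)$ (convergent since $2n+\lambda''-\lambda'=-\lambda-1<-1$); or (b) apply the further Pfaff-type identity $P_n^{(\alpha,\beta)}(y)=\bigl(\tfrac{1+y}{2}\bigr)^n P_n^{(\alpha,-2n-\alpha-\beta-1)}\bigl(\tfrac{3-y}{1+y}\bigr)$, which yields $\varphi_{i\lambda}^{(\lambda'',\lambda')}(t)=\frac{n!}{(\lambda''+1)_n}(\cosh t)^{-(\lambda+\lambda'+\lambda''+1)}P_n^{(\lambda'',\lambda)}(1-2\tanh^2 t)$; then $x=1-2\tanh^2 t$ turns the measure into $2^{-\lambda-\lambda''-1}(1-x)^{\lambda''}(1+x)^{\lambda-1}dx$, whose weight is one power of $(1+x)$ short of the orthogonality weight of $P_n^{(\lambda'',\lambda)}$, so one needs the (classical, but different) evaluation $\int_{-1}^1(1-x)^{\alpha}(1+x)^{\beta-1}\bigl[P_n^{(\alpha,\beta)}(x)\bigr]^2 dx=\frac{2^{\alpha+\beta}\,\Gamma(n+\alpha+1)\,\Gamma(n+\beta+1)}{\beta\, n!\,\Gamma(n+\alpha+\beta+1)}$. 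With $(\alpha,\beta)=(\lambda'',\lambda)$, the $\beta$ in the denominator supplies the factor $\lambda$ and the result is exactly $V_{\apm,\lambda}^{(\lambda',\lambda'')}$; so your method is salvageable, but as written the $\apm$ computation would not go through.
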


\subsection{Construction of holographic operators}
\label{subsec:ho}

We define the following diffeomorphisms $\Phi_{\delta\varepsilon}$
 onto the open subsets $X(p,q)_{\delta\varepsilon}$ by
\begin{alignat}{2}
\label{eqn:mapmp}
\Phi_\amp \colon 
 &X(q', p') \times X(p'', q'') \times (0,\infty)
 &&\rarrowsim X(p,q)_\amp
\\
 &((y', x'), (x'', y''), t)
 &&\mapsto (x' \sinh t, x'' \cosh t, y' \sinh t, y'' \cosh t), 
\notag
\\
\label{eqn:mappp}
\Phi_\app \colon 
 &X(p',q') \times X(p'', q'') \times (0,\frac \pi 2)
 &&\rarrowsim X(p,q)_\app
\\
 &((x', y'), (x'', y''), \theta)
 &&\mapsto (x' \cos \theta, x'' \sin \theta, y' \cos \theta, y'' \sin \theta), 
\notag
\\
\label{eqn:mappm}
\Phi_\apm \colon 
 &X(p',q') \times X(q'', p'') \times (0,\infty)
 &&\rarrowsim X(p,q)_\apm
\\
 &((x', y'), (y'', x''), t)
 &&\mapsto (x' \cosh t, x'' \sinh t, y' \cosh t, y'' \sinh t).  
\notag
\end{alignat}

By using the following coordinates:
\begin{alignat*}{2}
   (z',z'', t) &= \Phi_{\delta \varepsilon}^{-1}(x) \ && \text{ for $x \in X(p,q)_{\delta \varepsilon}$ for $(\delta,\varepsilon)=(-,+)$ or $(+,-)$},
\\
 (z',z'', \theta) &= \Phi_\app^{-1}(x) \ && \text{ for $x \in X(p,q)_\apm$}, 
\end{alignat*}
we introduce linear operators
\begin{equation}
\label{eqn:T}
   T_{\delta\varepsilon, \lambda}^{\lambda', \lambda''} \colon
   L^2(X(p',q')_{\delta}) \widehat\otimes L^2(X(p'',q'')_{\varepsilon})
  \to L^2(X(p,q)), 
\end{equation}
as follows:
\begin{align*}
   T_{\amp, \lambda}^{\lambda', \lambda''} h (x)
   &:=
   \begin{cases}  
        h(z',z'') \ \varphi_{i \lambda}^{(\lambda', \lambda'')}(t)
          \ (\cosh t)^{\lambda''-\rho''}  (\sinh t)^{\lambda'-\rho'}
            & \text{if } x \in X(p,q)_\amp, 
\\
                  0 & \text{otherwise,}
   \end{cases}
\\
   T_{\app, \lambda}^{\lambda', \lambda''} h (x)
   &:=
   \begin{cases}
   h(z',z'') \ 
       \varphi_{i \lambda}^{(\lambda'', \lambda')}(i \theta)
          (\cos \theta)^{\lambda'-\rho'}  (\sin \theta)^{\lambda''-\rho''}
\,\,\,
            & \text{if } x \in X(p,q)_\app, 
\\
   0 & \text{otherwise,}
  \end{cases}
\\
T_{\apm, \lambda}^{\lambda', \lambda''} h (x)
   &:=
   \begin{cases}  h(z',z'') \ \varphi_{i \lambda}^{(\lambda'', \lambda')}(t)
          \ (\cosh t)^{\lambda'-\rho'}  (\sinh t)^{\lambda''-\rho''}
            & \text{if } x \in X(p,q)_\apm, 
\\
          0 & \text{otherwise.}
   \end{cases}
\end{align*}

\begin{theorem}
\label{thm:holographic}
Suppose $(\delta,\varepsilon)=(-,+)$, $(+,+)$ or $(+,-)$.  
Let $\lambda \in A_+(p,q)$
 and $(\lambda',\lambda'') \in \Lambda_{\delta\varepsilon}(\lambda)$.  
Then $T_{\delta\varepsilon, \lambda}^{\lambda', \lambda''}$ induces
 an injective $G'$-intertwining operator:
\[
  T_{\delta\varepsilon, \lambda}^{\lambda', \lambda''} \colon
   L^2(X(p',q')_{\delta})_{\lambda'} 
   \widehat\otimes 
   L^2(X(p'',q'')_{\varepsilon})_{\lambda''} 
   \to L^2(X(p,q))_{\lambda}.  
\]
Moreover, 
 $(\V {\delta\varepsilon}{\lambda}{\lambda'}{\lambda''})^{-\frac 1 2}
  T_{\delta\varepsilon, \lambda}^{\lambda', \lambda''}$ is an isometry.  
\end{theorem}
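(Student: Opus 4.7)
The plan is to realize $T_{\delta\varepsilon,\lambda}^{\lambda',\lambda''}$ as the separation-of-variables map for the Laplace--Beltrami operator on $X(p,q)$, using the warped-product structure supplied by the diffeomorphisms $\Phi_{\delta\varepsilon}$ of \eqref{eqn:mapmp}--\eqref{eqn:mappm}. The first step is a direct computation pulling back the ambient pseudo-metric $g_{\mathbb{R}^{p,q}}$: in each cell this produces a warped-product expression (for the $(+,-)$ case as a model)
\[
\Phi_\apm^\ast g_{X(p,q)} = dt^2 + \cosh^2 t \cdot g_{X(p',q')_+} - \sinh^2 t \cdot g_{X(q'',p'')_+},
\]
so that the Radon measure factors as $(\cosh t)^{2\rho'+1}(\sinh t)^{2\rho''+1}\, dv'\, dv''\, dt$, where $\rho' = \frac12(p'+q'-2)$ and $\rho'' = \frac12(p''+q''-2)$. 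The other two cells admit analogous decompositions with $(\cos\theta,\sin\theta)$ in place of $(\cosh t,\sinh t)$, and with roles of $p'',q''$ swapped in the $(-,+)$ case.

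The second step is to verify the eigenvalue identity on the open cell. For $h = h' \otimes h''$ with $\Delta_{X(p',q')} h' = (-\lambda'^2+\rho'^2) h'$ and $\Delta_{X(p'',q'')} h'' = (-\lambda''^2+\rho''^2) h''$, the warped-product Laplacian acts on $h \cdot \psi(t)$ as a combination of the two transverse Laplacians and an ODE in $t$. Making the substitution $\psi(t) = \varphi(t)(\cosh t)^{\lambda'-\rho'}(\sinh t)^{\lambda''-\rho''}$, a direct calculation converts the resulting $t$-equation into exactly \eqref{eqn:JacobiODE} with parameters $(\lambda'',\lambda')$ and spectral parameter $\lambda$. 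Its unique even, normalized solution is the Jacobi function $\varphi_{i\lambda}^{(\lambda'',\lambda')}(t)$, and this is precisely the ansatz used in \eqref{eqn:T}. Hence $T_{\apm,\lambda}^{\lambda',\lambda''}h$ solves \eqref{eqn:Laplmd} pointwise on $X(p,q)_\apm$, and similarly for the other cells. The $G'$-equivariance is then automatic: $G'$ preserves each cell, acts through the product action on the $(z',z'')$ factor, and fixes the transverse variable $t$ (resp.\ $\theta$), while the Jacobi-weight prefactor depends only on $t$ (resp.\ $\theta$).

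The third step is the norm computation. Thanks to the volume-element factorization, the $L^2$-norm squared of $T_{\apm,\lambda}^{\lambda',\lambda''}h$ on $X(p,q)$ equals $\|h'\|^2\, \|h''\|^2$ times the one-dimensional integral
\[
\int_0^\infty \bigl|\varphi_{i\lambda}^{(\lambda'',\lambda')}(t)\bigr|^2\, (\cosh t)^{2\lambda'+1}(\sinh t)^{2\lambda''+1}\, dt,
\]
which by Lemma \ref{lem:2.4} evaluates to $\V{\apm}{\lambda'}{\lambda''}{\lambda}$ whenever $(\lambda',\lambda'') \in \Lambda_\apm(\lambda)$. By Lemma \ref{lem:V} this constant is positive, so the isometry statement for product vectors extends to the Hilbert tensor product by continuity. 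Injectivity is then immediate, and the other two cases are handled symmetrically.

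The main obstacle is to ensure that the zero-extension of $T h$ from the open cell $X(p,q)_{\delta\varepsilon}$ to all of $X(p,q)$ satisfies \eqref{eqn:Laplmd} in the distribution sense, so the image genuinely lands in $L^2(X(p,q))_\lambda$ rather than merely in a cell-eigenspace. This is where the discrete parity/integrality conditions defining $\Lambda_{\delta\varepsilon}(\lambda)$ enter: when $\lambda - \lambda' - \lambda'' -1 \in 2\mathbb{N}$ (or one of the analogous conditions), the hypergeometric series defining $\varphi_{i\lambda}^{(\lambda'',\lambda')}$ terminates after finitely many terms along the compact boundary, and decays like $e^{-(\lambda+\rho)t}$ at infinity in the noncompact directions, matching the asymptotics \eqref{eqn:L2asym} of genuine $L^2$-eigenfunctions. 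Both the weighted integrability provided by Lemma \ref{lem:2.4} and the regularity of $\varphi$ across $\mu^{-1}(0)$ and $\mu^{-1}(1)$ then allow an integration-by-parts argument ruling out boundary contributions in the weak Laplacian, so the zero-extension lies in $L^2(X(p,q))_\lambda$ and the proof is complete.
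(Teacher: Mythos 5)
Your steps 1--3 coincide with the paper's own route: the warped-product factorization of the measure and of $\Delta_{X(p,q)}$ on each cell (as in \eqref{eqn:measure} and \eqref{eqn:4.4.2}), the substitution $S_{\lambda',\lambda''}$ reducing the transverse ODE to the Jacobi equation \eqref{eqn:JacobiODE} (Proposition \ref{prop:smoothSol}), and the norm identity via Lemma \ref{lem:2.4} (Proposition \ref{prop:opnorm}). (Minor point: the pulled-back metric on $X(p,q)_{\apm}$ has $-dt^2$, not $+dt^2$, consistent with the sign of $D_{\apm}$ in \eqref{eqn:4.4.2}; this does not affect the measure computation.)

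The gap is in your final step, which you correctly flag as the main obstacle but do not actually resolve. First, the mechanism you invoke is off: in the noncompact cells $(\pm,\mp)$ the hypergeometric series for $\varphi_{i\lambda}^{(\lambda'',\lambda')}$ does not terminate; what the condition $(\lambda',\lambda'')\in\Lambda_{\delta\varepsilon}(\lambda)$ buys there is the vanishing of the growing component at $t=\infty$ (equivalently, finiteness of the integral in Lemma \ref{lem:2.4}), and this only controls $L^2$-ness, not the crossing of the interior hypersurfaces $\mu^{-1}(0)$ and $\mu^{-1}(1)$. Second, there is no ``regularity of $\varphi$ across'' those hypersurfaces to exploit: the zero-extension $F=T_{\delta\varepsilon,\lambda}^{\lambda',\lambda''}h$ is \emph{not} smooth across $\partial X(p,q)_{\delta\varepsilon}$ (the paper even notes that the image meets $C^{\infty}(X(p,q))$ only in $0$ when $p'p''\neq 0$), so a generic integration-by-parts ``ruling out boundary contributions'' is not justified; indeed Lemmas \ref{lem:Heviside} and \ref{lem:191496} show that for borderline parameters the analogous extension fails to be a weak solution, so the boundary terms are a genuine threat. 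What is actually needed, and what the paper supplies, is: (a) a reduction to $K'$-finite $h$ (using boundedness of $T$ from Proposition \ref{prop:opnorm} and closedness of $\Delta$), so that the asymptotics \eqref{eqn:L2asym} are available; (b) near the regular part $\partial X(p,q)_{\delta\varepsilon}^{\operatorname{reg}}$, which is approached only in the joint limit $t\to 0$, $s\to\infty$ with $e^{s}\sinh t$ bounded, the combination of \eqref{eqn:hasym} with the $t\to 0$ behavior of the Jacobi factor to get $F=O(\xi_1^{\lambda''}\xi_2^{-\rho''})$, together with the degenerate form of the Laplacian in the adapted coordinates (Lemma \ref{lem:1914194}: $\xi_1^2\partial_{\xi_1}^2+4\partial_{\xi_1}\partial_{\xi_2}+\xi_1 P\partial_{\xi_1}+Q$), which gives $\Delta F\in L^1_{\operatorname{loc}}$ precisely because $\lambda''>0$; and (c) a separate estimate near the singular part $t=0$ against the weight $d\mu_{\apm}$. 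None of this appears in your proposal, so the assertion that the image lies in $L^2(X(p,q))_{\lambda}$ --- the heart of the theorem --- remains unproved.
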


The proof of Theorem \ref{thm:holographic} is divided into two parts:
\begin{enumerate}
\item[$\bullet$]
to compute the operator norm of $T_{\delta \varepsilon,\lambda}^{\lambda', \lambda''}$, 
 see Proposition \ref{prop:opnorm};
\item[$\bullet$]
to show that $T_{\delta \varepsilon,\lambda}^{\lambda', \lambda''}h$ is 
 a weak solution
 to \eqref{eqn:Laplmd}, 
 see Proposition \ref{prop:weakSol}.  
\end{enumerate}

\subsection{Operator norms of the holographic operators}
We prove 
 that the linear operator $T_{\delta\varepsilon, \lambda}^{\lambda', \lambda''}$ is a scalar multiple
 of an isometric operator, 
 and find its $L^2$-norm.  
We do not need
 that $h$ satisfies a differential equation
 in the proposition below.  
\begin{proposition}
\label{prop:opnorm}
Suppose $(\delta, \varepsilon)=(-,+)$, $(+,+)$, or $(+,-)$.  
If $\lambda>0$ and $(\lambda', \lambda'') \in \Lambda_{\delta \varepsilon}(\lambda)$, 
 then $T_{\delta\varepsilon, \lambda}^{\lambda', \lambda''}$ is an isometry
 upto scaling:
\[
\| T_{\delta\varepsilon, \lambda}^{\lambda', \lambda''} h \|_{L^2(X(p,q))}^2
   =
   V_{\delta\varepsilon, \lambda}^{(\lambda', \lambda'')}
   \|h\|_{L^2(X(p',q')_{\delta} \times X(p'',q'')_{\varepsilon})}^2
\]
 for all $h \in L^2(X(p',q')_{\delta} \times X(p'',q'')_{\varepsilon})$.  
\end{proposition}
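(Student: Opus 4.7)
The plan is to compute $\|T_{\delta\varepsilon,\lambda}^{\lambda',\lambda''} h\|_{L^2(X(p,q))}^2$ by changing variables via the diffeomorphism $\Phi_{\delta\varepsilon}$, separating the integral by Fubini, and recognizing the residual one-variable integral as the one evaluated in Lemma \ref{lem:2.4}. Since $T_{\delta\varepsilon,\lambda}^{\lambda',\lambda''} h$ is supported on $X(p,q)_{\delta\varepsilon}$, the whole computation takes place on a single chart, and no differential equation for $h$ enters.

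The first step is to pull back the pseudo-Riemannian volume form. Consider $\Phi_\apm$ for concreteness. A direct computation of the induced metric from $g_{\mathbb{R}^{p,q}}$ in the coordinates $((x',y'),(y'',x''),t)$ shows that $(z',z'',t)$ is an orthogonal coordinate system on $X(p,q)_\apm$: the $X(p',q')$-factor carries $(\cosh t)^2\, g_{X(p',q')}$, the $X(q'',p'')$-factor carries $(\sinh t)^2\, g_{X(q'',p'')}$, and the $t$-direction is orthogonal to both factors and contributes $dt$ to the volume form. Since $\dim X(p',q') = 2\rho'+1$ and $\dim X(q'',p'') = 2\rho''+1$, this yields
\[
\Phi_\apm^{\ast}\, d\mathrm{vol}_{X(p,q)} = (\cosh t)^{2\rho'+1}(\sinh t)^{2\rho''+1}\, d\mathrm{vol}_{X(p',q')}\, d\mathrm{vol}_{X(q'',p'')}\, dt,
\]
and the analogous computation for $\Phi_\app$ produces the weight $(\cos\theta)^{2\rho'+1}(\sin\theta)^{2\rho''+1}$ on $(0,\pi/2)$.

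In $|T_{\apm,\lambda}^{\lambda',\lambda''} h|^2$, the factor $(\cosh t)^{2(\lambda'-\rho')}(\sinh t)^{2(\lambda''-\rho'')}$ combines with the Jacobian above into exactly $(\cosh t)^{2\lambda'+1}(\sinh t)^{2\lambda''+1}$; the $\rho$-shifts built into the definition of $T_{\delta\varepsilon,\lambda}^{\lambda',\lambda''}$ are designed precisely for this cancellation. Fubini then yields
\[
\|T_{\apm,\lambda}^{\lambda',\lambda''} h\|_{L^2(X(p,q))}^2 = \|h\|_{L^2(X(p',q')_+ \times X(p'',q'')_-)}^2 \int_0^\infty |\varphi_{i\lambda}^{(\lambda'',\lambda')}(t)|^2 (\cosh t)^{2\lambda'+1}(\sinh t)^{2\lambda''+1}\, dt,
\]
and Lemma \ref{lem:2.4} identifies the last integral with $\V{\apm}{\lambda'}{\lambda''}{\lambda}$. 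The $(+,+)$ case runs identically after the substitution $t \mapsto i\theta$ and using the compact integral formula in Lemma \ref{lem:2.4}; the $(-,+)$ case reduces to the $(+,-)$ case by the manifest symmetry that exchanges $(p',q',\lambda')$ with $(p'',q'',\lambda'')$, together with the isomorphism $X(p',q')_- \simeq X(q',p')_+$.

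The only technical point requiring care is the volume-form computation, but this is elementary: the identities $\cosh^2 t - \sinh^2 t = 1$ and $\cos^2\theta + \sin^2\theta = 1$ are precisely what force $\Phi_{\delta\varepsilon}$ to map onto $X(p,q)_{\delta\varepsilon}$ and simultaneously ensure that the $t$- (resp.\ $\theta$-) direction is $g_{\mathbb{R}^{p,q}}$-orthogonal to the fibre directions, which is what makes the pullback of $d\mathrm{vol}_{X(p,q)}$ factorize as claimed.
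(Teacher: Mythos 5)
Your proposal is correct and follows essentially the same route as the paper: express the invariant measure on $X(p,q)_{\delta\varepsilon}$ in the $\Phi_{\delta\varepsilon}$-coordinates as the product of the invariant measures on the two factors times the weight $(\cosh t)^{2\rho'+1}(\sinh t)^{2\rho''+1}$ (resp.\ the trigonometric analogue), note that the $\rho$-shifts in the definition of $T_{\delta\varepsilon,\lambda}^{\lambda',\lambda''}$ convert this into the weight $(\cosh t)^{2\lambda'+1}(\sinh t)^{2\lambda''+1}$, and conclude by Fubini and Lemma \ref{lem:2.4}. The only difference is that you supply the elementary warped-product computation of the Jacobian, which the paper simply asserts as \eqref{eqn:measure}--\eqref{eqn:dmu+-} (and the signs you suppress on the fibre metrics are harmless, since only $|\det|$ enters the Radon measure).
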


\begin{proof}
With respect to the diffeomorphisms \eqref{eqn:mapmp}--\eqref{eqn:mappm}, 
 the invariant measure $d \mu$
 on $X(p,q)$ is expressed as 
\begin{equation}
\label{eqn:measure}
  d \mu_{X(p,q)} = d \mu_{X(p',q')_{\delta}} d \mu_{X(p'',q'')_{\varepsilon}}
                   d \mu_{\delta\varepsilon}(t)
\quad
  \text{on $X(p,q)_{\delta\varepsilon}$}, 
\end{equation}
where
\begin{align}
   d \mu_{\amp}(t) :=& (\cosh t)^{2 \rho''+1}(\sinh t)^{2\rho'+1}d t,  
\notag
\\
   d \mu_{\app}(\theta) :=& (\cos \theta)^{2 \rho'+1}(\sin \theta)^{2\rho''+1}d \theta,  
\label{eqn:dmu++}
\\
\label{eqn:dmu+-}
d\mu_{+-}(t):=&
(\cosh t)^{2 \rho' +1} (\sinh t)^{2 \rho'' + 1} d t.  
\end{align}
Hence the proof of Proposition \ref{prop:opnorm} is reduced
 to Lemma \ref{lem:2.4}.  
\end{proof}

\subsection{Construction of smooth solutions on open sets}

Since the Laplacian $\Delta_{X(p,q)}$ is not
 an elliptic differential operator
 unless the signature of $g_{X(p,q)}$ is definite
 ({\it{i.e., }} $p=1$ or $q=0$), 
 eigenfunctions (in the distribution sense)
 of the Laplacian are not necessarily real analytic on $X(p,q)$.  
In fact, 
 when $p \ge 2$ and $q \ge 1$, 
 one sees from the proof of Corollary \ref{cor:discdeco}
 that $T_{\delta\varepsilon,\lambda}^{\lambda',\lambda''} h$ is never
 real analytic
 on the whole space $X(p,q)$
 if $h \not \equiv 0$ and $p' p'' \ne 0$.

We begin by considering the restriction
 of $T_{\delta\varepsilon,\lambda}^{\lambda',\lambda''} h$
 to the open set $X(p,q)_{\delta\varepsilon}$
 (Section \ref{subsec:orbits})
 for each $(\delta, \varepsilon)=(-,+)$, $(+,+)$, or $(+,-)$.  
\begin{proposition}
\label{prop:smoothSol}
Suppose  $\lambda, \lambda', \lambda'' \in {\mathbb{C}}$
 such that $\lambda', \lambda'' \ne -1,-2,\cdots$.  
Then for any $h \in C^{\infty}(X(p',q')_{\delta})_{\lambda'} \otimes C^{\infty}(X(p'',q'')_{\varepsilon})_{\lambda''}$, 
 $F(x):=T_{\delta\varepsilon, \lambda}^{\lambda',\lambda''} h(x)$
 satisfies the differential equation \eqref{eqn:Laplmd}
 on the open set $X(p,q)_{\delta \varepsilon}$.  
\end{proposition}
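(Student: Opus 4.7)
The plan is to reduce the eigenvalue equation $\Delta_{X(p,q)}F=(-\lambda^{2}+\rho^{2})F$ on the open set $X(p,q)_{\delta\varepsilon}$ to a one-variable ODE in the transverse direction by pulling back along $\Phi_{\delta\varepsilon}$, and then to recognise this ODE as the Jacobi differential equation \eqref{eqn:JacobiODE}.

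First I would compute the pullback metric $\Phi_{\delta\varepsilon}^{*}g_{X(p,q)}$. A direct calculation from the defining embedding into $\BR^{p+q}$ shows that in each of the three cases $X(p,q)_{\delta\varepsilon}$ carries a pseudo-Riemannian warped-product structure with factors $X(p',q')_{\delta}$ and $X(p'',q'')_{\varepsilon}$ and warping functions built from $\cosh t,\sinh t$ (in the $\amp$ and $\apm$ cases) or $\cos\theta,\sin\theta$ (in the $\app$ case); the corresponding volume densities are exactly those recorded in \eqref{eqn:measure}--\eqref{eqn:dmu+-}. Consequently the Laplace--Beltrami operator splits as
\begin{equation*}
\Delta_{X(p,q)}
\;=\;
D_{\delta\varepsilon}^{\mathrm{rad}}
\;+\;
\alpha_{\delta\varepsilon}(t)^{-1}\Delta_{X(p',q')_{\delta}}
\;+\;
\beta_{\delta\varepsilon}(t)^{-1}\Delta_{X(p'',q'')_{\varepsilon}},
\end{equation*}
where $D_{\delta\varepsilon}^{\mathrm{rad}}$ is the second-order ODE in $t$ (or $\theta$) whose drift coefficient is the logarithmic derivative of the radial density in \eqref{eqn:measure}, and $\alpha_{\delta\varepsilon},\beta_{\delta\varepsilon}$ are the squared warpings (e.g.\ $\cosh^{2}t,\sinh^{2}t$ in the $\apm$ case).

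Next I would substitute $F=h(z',z'')u(t)$ and use $\Delta_{X(p',q')_{\delta}}h=(-\lambda'^{2}+\rho'^{2})h$ and its counterpart on the second factor to collapse the PDE to a scalar ODE for $u$. The crucial observation is that the gauge change
\[
u(t)=\varphi(t)\,(\cosh t)^{\lambda'-\rho'}(\sinh t)^{\lambda''-\rho''}
\]
(and its two analogues) is precisely the similarity transformation turning $D_{\apm}^{\mathrm{rad}}$ into the operator $L_{\apm}$ of \eqref{eqn:JacobiL} with parameters $(\lambda'',\lambda')$; simultaneously it absorbs the contributions of $(-\lambda'^{2}+\rho'^{2})\alpha^{-1}$ and $(-\lambda''^{2}+\rho''^{2})\beta^{-1}$ into the constant term. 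Using $\rho=\rho'+\rho''+1$, the leftover constant assembles to $(\lambda'+\lambda''+1)^{2}-\lambda^{2}$, so the ODE for $\varphi$ becomes exactly \eqref{eqn:JacobiODE}. Since $\lambda',\lambda''\neq-1,-2,\dots$, the Jacobi function $\varphi_{i\lambda}^{(\lambda'',\lambda')}$ is by definition a solution, proving \eqref{eqn:Laplmd} holds on $X(p,q)_{\apm}$. The cases $(+,+)$ and $(-,+)$ go through in the same way, with the Jacobi function evaluated at imaginary argument $i\theta$ for $\app$ as in \eqref{eqn:JacobiF}, and with $(\lambda',\lambda'')$ swapped for $\amp$.

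The main obstacle I expect is the bookkeeping of pseudo-Riemannian signs across the three regions: in $\app$ the transverse direction is of one causal type, whereas in $\amp$ and $\apm$ it is of the opposite type, and the eigenvalue contributions $(-\lambda'^{2}+\rho'^{2})$, $(-\lambda''^{2}+\rho''^{2})$ therefore enter the radial ODE with different relative signs. Each case must be verified separately to confirm that, after the gauge substitution, the three separation constants collapse to the common value $(\lambda'+\lambda''+1)^{2}-\lambda^{2}$ needed to match \eqref{eqn:JacobiODE}. Once this sign accounting is carried out, the remainder of the argument is the standard warped-product calculation together with the well-known conjugation identity that reduces the radial part of a pseudo-Riemannian Laplacian to a Jacobi operator.
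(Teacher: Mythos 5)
Your proposal is correct and follows essentially the same route as the paper: the warped-product splitting of $\Delta_{X(p,q)}$ along $\Phi_{\delta\varepsilon}$ (the paper's formula \eqref{eqn:4.4.2} and its $\app$ analogue), separation of variables using the eigenvalue equations on the two factors, and the conjugation of the radial operator $D_{\delta\varepsilon}$ by the gauge factor $S_{\lambda',\lambda''}$ of \eqref{eqn:S}, which with $\rho=\rho'+\rho''+1$ collapses the constants to $(\lambda'+\lambda''+1)^2-\lambda^2$ and reduces everything to the Jacobi equation \eqref{eqn:JacobiODE}. The sign bookkeeping you flag (e.g.\ $-D_{\apm}+\cosh^{-2}t\,\Delta_{X(p',q')}-\sinh^{-2}t\,\Delta_{X(q'',p'')}$ versus the $\app$ case) is exactly the case-by-case verification the paper carries out.
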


\begin{proof}
Suppose $(\delta, \varepsilon)=(+,-)$.  
We set
\begin{align*}
D_{\apm} =&
\frac{\partial^2}{\partial t^2}
   + ( (2 \rho'+1) \tanh t + (2\rho''+1) \coth t) \frac{\partial}{\partial t}, 
\\
L_{\apm} =&
 \frac{\partial^2}{\partial t^2}
    + ((2 \lambda'+1) \tanh t + (2 \lambda'' + 1) \coth t) \der{t},   
\end{align*}
where we set 
$
\rho' = \frac{p'+q'-2}2,
\
\rho'' = \frac{p''+q''-2}2.  
$
We note that 
$
\rho=\rho'+\rho''+1.  
$

A short computation shows that 
\[
  S_{\lambda', \lambda''}^{-1} \circ D_{\apm} \circ S_{\lambda', \lambda''}
  =
  L_{\apm} + ((\lambda' + \lambda''+1)^2-\rho^2
  - \frac{(\lambda')^2-(\rho')^2}{(\cosh t)^2} + \frac{(\lambda'')^2-(\rho'')^2}{(\sinh t)^2} ), 
\]
under the transform $S_{\lambda',\lambda''}$ defined by 
\begin{equation}
\label{eqn:S}
 (S_{\lambda', \lambda''} \varphi)(t) := (\cosh t)^{\lambda'-\rho'}(\sinh t)^{\lambda''-\rho''} \varphi(t).  
\end{equation}
Via the diffeomorphism
 $\Phi_{+-}$ \eqref{eqn:mappm}, 
 the Laplacian $\Delta_{X(p,q)}$ takes the form:
\begin{equation}
\label{eqn:4.4.2}
\Delta_{X(p,q)}
 =  - D_{\apm} + \frac{1}{\cosh^2 t} \Delta_{X(p', q')} 
  - \frac{1}{\sinh^2 t} \Delta_{X(q'', p'')}
\end{equation}
in $X(p,q)_{\apm}$.  
Therefore, 
 for nonzero $h' \in C^{\infty}(X(p',q'))_{\lambda'}$
 and $h'' \in C^{\infty}(X(q'',p''))_{\lambda''}$, 
 $F_{\apm}(z',z'',t):=h'(z') h''(z'') (S_{\lambda', \lambda''} \varphi)(t)$
 satisfies
\[
   (\Delta_{X(p,q)} + \lambda^2-\rho^2) F_{\apm} \circ \Phi_{\apm}^{-1}
   = 0 
   \quad
   \text{on $X(p,q)_{\apm}$}
\]
 if and only if $\varphi$ satisfies the Jacobi differential equation
 \eqref{eqn:JacobiODE}.  
Thus Proposition \ref{prop:smoothSol} is shown 
 for $(\delta,\varepsilon)=(+,-)$.

The proof for $(\delta,\varepsilon)=(-,+)$ is essentially the same,
 and that for $(\delta,\varepsilon)=(+,+)$ goes similarly.  
In this case, 
 the Laplacian takes the form:
\begin{equation*}
\Delta_{X(p,q)}
 = D_{\app}+ \frac{1}{\cos^2 \theta} \Delta_{X(p', q')} 
   + \frac{1}{\sin^2 \theta} \Delta_{X(p'', q'')}
\end{equation*}
 on $X(p,q)_{\app}$ in the coordinates via $\Phi_{\app}$, 
where we set 
\[
D_{++}:=\frac{\partial^2}{\partial \theta^2}
   - ((2 \rho'+1) \tan \theta  - (2\rho''+1) \cot \theta)
          \frac{\partial}{\partial \theta}.  
\]
By the change of variables $z=\sin^2 \theta$, 
 the function 
$$
    g(z', z'', z) 
   := (\cos \theta)^{-\lambda' + \rho'}
      (\sin \theta)^{-\lambda'' + \rho''}
     F \circ \Phi_\app(z', z'', \theta), 
$$
 satisfies 
 the same hypergeometric equation \eqref{eqn:4.4.6}, 
 with regular singularities:
 the exponents at $z=0$ are $0$, $-\lambda''$;
 and those at $z=1$ are $0$, $-\lambda'$.
\end{proof}

\subsection{Boundary $\partial X(p,q)_{\delta\varepsilon}$}
\label{subsec:bdry}

By definition \eqref{eqn:T}, 
 $T_{\delta \varepsilon, \lambda}^{\lambda', \lambda''}h$ is
 the extension of a solution
 to the differential equation \eqref{eqn:Laplmd}
 in the open domain $X(p,q)_{\delta \varepsilon}$
 (see Proposition \ref{prop:smoothSol})
 to the whole manifold $X(p,q)$ by zero
 outside the domain.  
In order to prove a precise condition for such an extension
 to give a weak solution to \eqref{eqn:Laplmd}
 in $L^2(X(p,q))$, 
 we need an estimate of the solution near the boundary.  

In this section
 we study the boundary $\partial X(p,q)_{\delta\varepsilon}$.  
We observe that 
\[
   \partial X(p,q)_{++}= \partial X(p,q)_{-+} \cup \partial X(p,q)_{+-}.  
\]
Since $\partial X(p,q)_{-+}$ is similar
 to $\partial X(p,q)_{+-}$, 
 we take a closer look at 
\begin{align*}
\partial X(p,q)_{+-}
=&\{(u',u'',v',v'')\in X(p,q): |u''|=|v''|\}, 
\intertext{which is a union of the following two submanifolds:}
\partial X(p,q)_{+-}^{\operatorname{sing}}
:=&\{(u',0,v',0): (u', v') \in X(p',q')\}, 
\\
\partial X(p,q)_{+-}^{\operatorname{reg}}
:=&\{(u',u'',v',v'')\in X(p,q) : |u''|=|v''| \ne 0 \}.  
\end{align*}

We note that the singular part 
 $\partial X(p,q)_{+-}^{\operatorname{sing}}$
 is diffeomorphic to $X(p',q')$
 and that the map $\Phi_{\apm}$ extended to $t=0$ 
 in \eqref{eqn:mappm}
 surjects $\partial X(p,q)_{+-}^{\operatorname{sing}}$:
\[
     \Phi_{\apm} (X(p',q') \times X(q'',p'') \times \{0\})
     =
     \partial X(p,q)_{\apm}^{\operatorname{sing}}.  
\]
On the other hand, 
 the regular part $\partial X(p,q)_{+-}^{\operatorname{reg}}$
 is a hypersurface in $X(p,q)$.  
In a neighbourhood $U$ of a point at $\partial X(p,q)_{\apm}^{\operatorname{reg}}$, 
 we set
\[
  \xi_1:=|v''|-|u''|, \,\, 
  \xi_2:=|v''|+|u''| \, (>0), 
\]
and take coordinates on $U$ $(\subset X(p,q))$ by
\begin{equation}
\label{eqn:coordxi}
   (u',u'',v',v'')
   =
  ((1+\xi_1 \xi_2)^{\frac 1 2}x',
    \frac 1 2 (\xi_2- \xi_1)\omega'', 
    (1+\xi_1 \xi_2)^{\frac 1 2}y', 
    \frac 1 2 (\xi_1 + \xi_2)\eta''), 
\end{equation}
where $z'=(x',y') \in X(p',q')$, 
 $\omega'' \in S^{p''-1}$, and $\eta' \in S^{q''-1}$.  
Then $U \cap X(p,q)_{\apm}$ is given by $\xi_1 >0$, 
 whereas $U \cap X(p,q)_{\app}$ is given by $\xi_1<0$.  

\begin{lemma}
\label{lem:1914194}
In the coordinates \eqref{eqn:coordxi}, 
 the Laplacian $\Delta_{X(p,q)}$ takes the form
\begin{equation}
\label{eqn:Lapxi}
\Delta_{X(p,q)}
=
\xi_1^2 \frac{\partial^2}{\partial \xi_1^2}
+
4 \frac{\partial^2}{\partial \xi_1 \partial \xi_2}
+\xi_1 P \frac{\partial}{\partial \xi_1}
+Q, 
\end{equation}
where $P$ and $Q$ are differential operators
 of variables $\xi_2$, $x'$, $y'$, $\omega''$ and $\eta''$
 with smooth coefficients.  
\end{lemma}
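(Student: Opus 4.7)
The claim is a direct coordinate computation of the Laplace--Beltrami operator of $X(p,q)$ in the chart \eqref{eqn:coordxi}. The plan is threefold. First, I would compute the pullback of the ambient metric $g_{\mathbb{R}^{p,q}}=|du|^2-|dv|^2$ under \eqref{eqn:coordxi}. Differentiating each of $u' = (1+\xi_1\xi_2)^{1/2}x'$, $v' = (1+\xi_1\xi_2)^{1/2}y'$, $u'' = \tfrac{1}{2}(\xi_2-\xi_1)\omega''$, $v'' = \tfrac{1}{2}(\xi_1+\xi_2)\eta''$, and using $|x'|^2-|y'|^2=1$ on $X(p',q')$ (which forces $\langle x',dx'\rangle=\langle y',dy'\rangle$) together with $|\omega''|=|\eta''|=1$ (so $\langle\omega'',d\omega''\rangle=\langle\eta'',d\eta''\rangle=0$), every cross term between the $d\xi_i$'s and the transverse variables drops out. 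The resulting metric is block-diagonal, with $(\xi_1,\xi_2)$-block
\[
g^{(\xi)} = \frac{(\xi_2\,d\xi_1+\xi_1\,d\xi_2)^2}{4(1+\xi_1\xi_2)} - d\xi_1\,d\xi_2
\]
and transverse blocks $(1+\xi_1\xi_2)g_{X(p',q')}$, $\tfrac14(\xi_2-\xi_1)^2 g_{S^{p''-1}}$, $-\tfrac14(\xi_1+\xi_2)^2 g_{S^{q''-1}}$; the $(\xi_1,\xi_2)$-block has determinant $-1/[4(1+\xi_1\xi_2)]$, which is nonzero near $\xi_1=0$.

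Next, a routine $2\times 2$ inversion of $g^{(\xi)}$ yields, up to the overall signature sign, $g^{\xi_1\xi_1}=\xi_1^2$, $g^{\xi_1\xi_2}=\xi_1\xi_2+2$, $g^{\xi_2\xi_2}=\xi_2^2$; the essential algebraic features are that $g^{\xi_1\xi_1}$ vanishes to second order at $\xi_1=0$ while $g^{\xi_1\xi_2}$ takes the nonzero constant value $2$ there. Plugging into $\Delta=|g|^{-1/2}\partial_i(|g|^{1/2}g^{ij}\partial_j)$, the principal symbol in $(\xi_1,\xi_2)$ decomposes as
\[
\xi_1^2\partial_{\xi_1}^2 + 4\,\partial_{\xi_1}\partial_{\xi_2} + 2\xi_1\xi_2\,\partial_{\xi_1}\partial_{\xi_2} + \xi_2^2\partial_{\xi_2}^2,
\]
the first two terms matching \eqref{eqn:Lapxi}, the third contributing $P=2\xi_2\partial_{\xi_2}$ to $\xi_1 P\,\partial_{\xi_1}$, and the last being absorbed into $Q$.

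Finally, the first-order piece of $\Delta$ comes from logarithmic derivatives of the volume form
\[
|g|^{1/2} = \tfrac{1}{2}(1+\xi_1\xi_2)^{(p'+q'-2)/2}\bigl(\tfrac{\xi_2-\xi_1}{2}\bigr)^{p''-1}\bigl(\tfrac{\xi_2+\xi_1}{2}\bigr)^{q''-1}h(x',y',\omega'',\eta''),
\]
and the coefficient of $\partial_{\xi_1}f$ in $\Delta f$ equals $|g|^{-1/2}[\partial_{\xi_1}(|g|^{1/2}g^{\xi_1\xi_1})+\partial_{\xi_2}(|g|^{1/2}g^{\xi_1\xi_2})]$. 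The first bracket is manifestly $\xi_1$-divisible since $g^{\xi_1\xi_1}=\xi_1^2$, and within the second the piece $\sqrt{|g|}\,\partial_{\xi_2}(\xi_1\xi_2+2)=\sqrt{|g|}\,\xi_1$ is $\xi_1$-divisible as well. Once the remaining contribution $g^{\xi_1\xi_2}\partial_{\xi_2}\log|g|^{1/2}$ is shown to combine with the other terms into a $\xi_1$-multiple, the total coefficient of $\partial_{\xi_1}$ takes the form $\xi_1 P$ with $P$ a smooth operator in $(\xi_2,x',y',\omega'',\eta'')$, and the remaining pieces of $\Delta$---the rescaled transverse Laplacians $\Delta_{X(p',q')}$, $\Delta_{S^{p''-1}}$, $\Delta_{S^{q''-1}}$, together with the $\partial_{\xi_2}$- and $\xi_2^2\partial_{\xi_2}^2$-contributions---are collected into $Q$, giving the announced normal form \eqref{eqn:Lapxi}.

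The main obstacle is precisely the cancellation in the last step: the logarithmic derivative $\partial_{\xi_2}\log|g|^{1/2}$ at $\xi_1=0$ contributes $(p''-1)/\xi_2+(q''-1)/\xi_2=(p''+q''-2)/\xi_2$, producing an a priori nontrivial first-order term from $g^{\xi_1\xi_2}\partial_{\xi_2}\log|g|^{1/2}$ which must be cancelled by tracking the exponents $p''-1$, $q''-1$ in the volume form against the explicit form $g^{\xi_1\xi_2}=\xi_1\xi_2+2$. This bookkeeping is the only part of the derivation requiring genuine care; the remainder is mechanical.
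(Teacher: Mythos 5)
Your framework is legitimate and genuinely different from the paper's: the paper obtains \eqref{eqn:Lapxi} by composing the known expression \eqref{eqn:4.4.2} with polar coordinates $z''=(\omega''\sinh s,\eta''\cosh s)$ in the second factor and the substitution $\xi_1=e^{-s}\sinh t$, $\xi_2=e^{s}\sinh t$, whereas you compute the induced metric in the chart \eqref{eqn:coordxi} directly and use the divergence form of the Laplace--Beltrami operator. Your intermediate computations are correct: the metric is block diagonal as you claim, the $(\xi_1,\xi_2)$-block has determinant $-1/(4(1+\xi_1\xi_2))$, its inverse is $g^{\xi_1\xi_1}=-\xi_1^2$, $g^{\xi_1\xi_2}=-(2+\xi_1\xi_2)$, $g^{\xi_2\xi_2}=-\xi_2^2$ (your values up to the overall sign you flag, which is harmless), and your formula for $|g|^{1/2}$ is right.

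The problem is the one step you defer: ``once the remaining contribution $g^{\xi_1\xi_2}\partial_{\xi_2}\log|g|^{1/2}$ is shown to combine with the other terms into a $\xi_1$-multiple.'' That step fails, so the proposal is not a proof. Every other contribution to the coefficient of $\partial_{\xi_1}$ is already $O(\xi_1)$ (as you note), so nothing is available to cancel this term, and with your own expression for $|g|^{1/2}$ its value on $\{\xi_1=0\}$ is $2(p''+q''-2)/\xi_2$ up to sign, which is nonzero whenever $p''+q''>2$. You can confirm this independently of the metric computation by applying \eqref{eqn:4.4.2}, rewritten with polar coordinates in the $z''$-variable, to the coordinate function $\xi_1=|v''|-|u''|$: the result tends to $-2(p''+q''-2)/\xi_2$ as $\xi_1\downarrow 0$, and $\Delta_{X(p,q)}\xi_1$ is exactly the first-order coefficient in question. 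Hence what your computation actually establishes is the normal form
\begin{equation*}
\Delta_{X(p,q)}
=\mp\Bigl(\xi_1^{2}\frac{\partial^{2}}{\partial\xi_1^{2}}
+4\frac{\partial^{2}}{\partial\xi_1\partial\xi_2}
+\xi_1 P\frac{\partial}{\partial\xi_1}
+P_{0}\frac{\partial}{\partial\xi_1}
+Q\Bigr),
\end{equation*}
with $P_0$ smooth on $U$ (where $\xi_2>0$) but $P_0|_{\xi_1=0}=2(p''+q''-2)/\xi_2\not\equiv 0$, rather than the form stated with first-order part $\xi_1P\partial_{\xi_1}$ alone. So you must either exhibit the cancellation (the evaluation above shows there is none) or prove and work with this corrected form; note that the extra smooth term is harmless for the way the lemma is used later, since $P_0\partial_{\xi_1}$ still sends $O(\xi_1^{\lambda''})$ (with $\lambda''>0$) to a locally integrable function in Case I of the proof of Proposition \ref{prop:weakSol}, and after writing $P_0\partial_{\xi_1}=\partial_{\xi_1}\circ P_0-(\partial_{\xi_1}P_0)$ the operator fits the shape $\xi_1^{2}\partial_{\xi_1}^{2}+\partial_{\xi_1}P'+P''$ required in Lemma \ref{lem:191496}.
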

\begin{proof}
The coordinates \eqref{eqn:coordxi} are obtained from 
 $\Phi_{\apm}(z',z'',t)$, 
 see \eqref{eqn:mappm}, 
 successively by the following two steps:
\begin{align}
\label{eqn:coo1}
&\bullet \,\,\,
z''=(\omega''\sinh s, \eta''\cosh s) \in X(p'',q'')_-, 
\\
&\label{eqn:coo2}
\bullet \,\,\,
\xi_1=e^{-s}\sinh t, 
\,\,
\xi_2=e^s \sinh t.  
\end{align}
By change of coordinates in the first step, 
 the Laplacian $\Delta_{X(p,q)}$ takes the form \eqref{eqn:4.4.2}
 with the second term replaced by 
\[
  \frac 1 {\cosh^2 t}
  (-D^s +\frac 1 {\cosh^2 s} \Delta_{S^{q''-1}}-\frac 1 {\sinh^2 s}\Delta_{S^{p''-1}})
\]
where we set 
\[
  D^s := \frac{\partial^2}{\partial s^2}
         +
         ((q''-1) \tanh s+(p''-1)\coth s)
         \frac {\partial}{\partial s}.  
\]
Then the change of variables $(t,s) \mapsto (\xi_1, \xi_2)$
 in the second step yields
\begin{equation*}
  \frac{\partial}{\partial s} 
  =
  - \xi_1 \frac {\partial}{\partial \xi_1} + \xi_2 \frac {\partial}{\partial \xi_2},
\quad
   \frac{\partial}{\partial t} 
  =
  \left(\frac{1+\xi_1 \xi_2}{\xi_1 \xi_1}\right)^{\frac 1 2}
  \left(\xi_1\frac{\partial}{\partial \xi_1} 
  + \xi_2 \frac {\partial}{\partial \xi_2}\right), 
\end{equation*}
whence the lemma by short computations.  
\end{proof}

\subsection{Extension as a weak solution in $L^2(X(p,q))$}

The proof of Theorem \ref{thm:holographic} will be completed
 if the image of $T_{\delta\varepsilon, \lambda}^{\lambda',\lambda''}$
 gives weak solutions
 to the differential equation \eqref{eqn:Laplmd}.  
\begin{proposition}
\label{prop:weakSol}
Suppose $(\delta, \varepsilon)=(-,+)$, $(+,+)$, or $(+,-)$.  
Assume $(\lambda', \lambda'') \in \Lambda_{\delta\varepsilon}(\lambda)$.  
Then for any $h \in L^2(X(p',q')_{\delta})_{\lambda'} \widehat \otimes L^2(X(p'',q'')_{\varepsilon})_{\lambda''}$, 
 $F:=T_{\delta\varepsilon, \lambda}^{\lambda',\lambda''} h$ is a weak solution 
 to the differential equation \eqref{eqn:Laplmd} on $X(p,q)$.  
\end{proposition}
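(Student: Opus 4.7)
The plan is to reduce to the dense subspace of $K$-finite inputs and verify the weak equation by a distributional calculation near the boundary. I focus on the case $(\delta,\varepsilon) = (+,-)$; the remaining two cases are parallel via the analogous expansions of $\Delta_{X(p,q)}$ near $\partial X(p,q)_{-+}$ and $\partial X(p,q)_{++}$. Since $T_{+-,\lambda}^{\lambda',\lambda''}$ is $L^2$-bounded by Proposition \ref{prop:opnorm} and $\Delta_{X(p,q)} + \lambda^2 - \rho^2$ is continuous from $L^2$ to $H^{-2}$, the set of $h$ for which the weak equation holds is closed in $L^2$, so it is enough to treat $h = h' \otimes h''$ with both factors $K$-finite.

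For such $h$, I combine the Strichartz asymptotic \eqref{eqn:L2asym} on $X(p'',q'')_{-} \simeq X(q'',p'')_{+}$ with the relations $e^{-s} = (\xi_1/\xi_2)^{1/2}$ and $\sinh t = (\xi_1\xi_2)^{1/2}$ read off from \eqref{eqn:coo1}--\eqref{eqn:coo2} to obtain
\[
  F(\xi_1,\xi_2,x',y',\omega'',\eta'') = a(\omega'',\eta'')\, h'(x',y')\, \xi_1^{\lambda''}\xi_2^{-\rho''}\,(1 + o(1))
  \qquad (\xi_1 \to 0^+),
\]
so $F$ extends continuously to the regular part of $\partial X(p,q)_{+-}$ with boundary value zero, since $\lambda'' > 0$ in the generic range.

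Setting $u := F\cdot\mathbf{1}_{\xi_1 > 0}$ and substituting into \eqref{eqn:Lapxi}, I compute the distributional Laplacian of $u$ by the product rule. The identities $\xi_1\delta(\xi_1) = 0$ and $\xi_1^2\delta(\xi_1) = \xi_1^2\delta'(\xi_1) = 0$ annihilate the potential delta contributions from $\xi_1^2\partial_{\xi_1}^2 u$ and from $\xi_1 P\partial_{\xi_1} u$. The remaining term $4\partial_{\xi_1}\partial_{\xi_2} u$ contributes $4\,(\partial_{\xi_2} F|_{\xi_1=0^+})\,\delta(\xi_1)$, which vanishes because the identical vanishing $F|_{\xi_1=0} \equiv 0$ forces its tangential derivative to be zero. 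The $Q$-term is harmless since $Q$ contains no $\partial_{\xi_1}$. Combining this with Proposition \ref{prop:smoothSol} on the interior of $X(p,q)_{+-}$ and the trivial vanishing of $u$ on $X(p,q)_{-+}\cup X(p,q)_{++}$, I obtain the weak equation on the open set $V := X(p,q)\setminus \partial X(p,q)_{+-}^{\mathrm{sing}}$.

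The hard part will be extending the identity across the singular boundary $\partial X(p,q)_{+-}^{\mathrm{sing}} \simeq X(p',q')$ of codimension $k = p'' + q''$ in $X(p,q)$. The distribution $T := (\Delta_{X(p,q)} + \lambda^2 - \rho^2) F$ lies in $H^{-2}_{\mathrm{loc}}(X(p,q))$ and is supported in this submanifold. When $k$ is sufficiently large (essentially $k \geq 4$), no nonzero element of $H^{-2}$ can be supported on a submanifold of such codimension, forcing $T = 0$ automatically. For the remaining small-codimension cases I would introduce local coordinates transverse to the singular set in which $\Delta_{X(p,q)}$ admits a Fuchsian-type expansion analogous to \eqref{eqn:Lapxi}, and repeat the same distributional accounting, exploiting that $F$ vanishes in the normal direction to order at least $\lambda''-\rho''$; the residual degenerate cases $(p'',q'') \in \{(1,0),(0,1)\}$ are already covered by Example \ref{ex:GP} and the references given there.
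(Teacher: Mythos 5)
Your overall strategy (reduce to $K'$-finite inputs by boundedness of $T_{\delta\varepsilon,\lambda}^{\lambda',\lambda''}$, then check that extension by zero creates no singular contribution at $\partial X(p,q)_{\delta\varepsilon}$) is the paper's strategy, and your analysis at the \emph{regular} part of the boundary is essentially its Case I: there one also derives $F=O(\xi_1^{\lambda''}\xi_2^{-\rho''})$ from \eqref{eqn:L2asym} and uses the form \eqref{eqn:Lapxi} of $\Delta_{X(p,q)}$ together with $\lambda''>0$. One small caveat there: since $\lambda''$ is typically a half-integer, $\partial_{\xi_1}^2$ of $\xi_1^{\lambda''}\mathbf{1}_{\xi_1>0}$ is \emph{not} ``pointwise derivative plus delta terms'' (it is a finite-part distribution), so the identities $\xi_1\delta=\xi_1^2\delta'=0$ are only heuristic; the correct statement, which does follow from $\lambda''>0$ by integrating by parts against $\xi_1^2\psi$ resp.\ $\xi_1\psi$, is that each $\xi_1$-term of \eqref{eqn:Lapxi} applied to $u$ is the locally integrable function computed pointwise.

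The genuine gap is at the singular boundary $\partial X(p,q)_{\apm}^{\operatorname{sing}}$. Your Sobolev argument (no nonzero element of $H^{-2}_{\operatorname{loc}}$ supported on a submanifold of codimension $\ge 4$) is fine where it applies, but the codimension is $k=p''+q''$, and in the relevant range ($p''\ge 1$, $q''\ge 2$, so that $A_-(p'',q'')\ne\emptyset$) the case $(p'',q'')=(1,2)$ has $k=3$, where a transversal delta \emph{does} lie in $H^{-2}_{\operatorname{loc}}$; the mirror low-codimension cases occur for $(\delta,\varepsilon)=(-,+)$ and at both ends of $X(p,q)_{\app}$. For these your fallback is only a plan, and its key premise --- that $F$ vanishes in the normal direction to order $\lambda''-\rho''$ --- is false in general: $\rho''=\tfrac12(p''+q''-2)$ typically exceeds $\lambda''$, so $\lambda''-\rho''\le 0$ and $F$ need not vanish at the singular set (for larger $p''+q''$ it blows up); moreover the transversal slice there is a cone over $X(q'',p'')$, not a hypersurface, and $z''$-derivatives of $h$ produce further negative powers of $t$, so the $\xi_1$-accounting does not transfer. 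The paper's Case II is different and uniform in $k$, which makes your dichotomy unnecessary: for $K'$-finite $h$ one has $F=O(t^{\lambda''-\rho''})$, $Y_1F=O(t^{\lambda''-\rho''-1})$, $Y_1Y_2F=O(t^{\lambda''-\rho''-2})$ for vector fields $Y_1,Y_2$, and all of these are locally integrable against the density \eqref{eqn:dmu+-}, since $(\lambda''-\rho''-2)+(2\rho''+1)>-1$ is equivalent to $\lambda''+\rho''>0$, automatic from $\lambda''>0$; integrability of $F$ and its first two derivatives across this measure-zero set is what excludes a singular contribution there. To repair your proof, replace the fallback by this estimate (or run it in all codimensions and drop the $H^{-2}$ step). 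Finally, citing Example \ref{ex:GP} for $(p'',q'')\in\{(1,0),(0,1)\}$ is not the right reference: for $(\delta,\varepsilon)=(+,-)$ the case $(1,0)$ is vacuous since $\Lambda_{\apm}(\lambda)=\emptyset$, and $(0,1)$ has $p''=0$, where the paper observes that the extension is already smooth.
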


\begin{proof}
Since the Laplacian $\Delta$ is a closed operator on $L^2(X(p,q))$,
 and since $T_{\delta\varepsilon, \lambda}^{\lambda',\lambda''}$ is 
 a bounded operator by Proposition \ref{prop:opnorm},
 it suffices to prove the assertion 
 for a dense subspace of the Hilbert space.  
Thus we may and do assume
 that $h$ is a $K'$-finite function.  
Then $F$ is real analytic on $X(p,q)_{\delta \varepsilon}$
 and satisfies \eqref{eqn:Laplmd} in $X(p,q)_{\delta \varepsilon}$
 in the usual sense
 by Proposition \ref{prop:smoothSol}.

In order to prove 
 that $F$ is a weak solution to \eqref{eqn:Laplmd}
 in the whole manifold $X(p,q)$,  
 we consider the boundary $\partial X(p,q)_{\delta \varepsilon}$, 
 and explain the case $(\delta,\varepsilon)=(+,-)$.  
We may and do assume that $p''>0$.  
In fact, 
 if $p''=0$, 
 then $X(p,q)_{\app} = X(p,q)_{\apm} =\emptyset$
 and $T_{\apm, \lambda}^{\lambda',\lambda''}h |_{X(p,q)_{\apm}}$ extends
 to a smooth function on $X(p,q)$.

Suppose $p''>0$.  
Then $\lambda'' \in A_-(p'',q'')$ satisfies $\lambda''>0$.  
In order to prove that $F$ is a weak solution to \eqref{eqn:Laplmd}, 
 it suffices to verify it near the boundary
 $\partial X(p,q)_{\apm}=\partial X(p,q)_{\apm}^{\operatorname{reg}}
  \cup \partial X(p,q)_{\apm}^{\operatorname{sing}}$.  

\vskip 1pc
{\bf{Case I.}}\enspace
First, 
 we deal with a neighbourhood $U$
 of a point at $\partial X(p,q)_{\apm}^{\operatorname{reg}}$.  
We take coordinates of $U$ 
 as in \eqref{eqn:coordxi}.  
We recall 
 that the boundary $U \cap \partial X(p,q)_{\apm}$
 is given by $\xi_1=0$
 where $\xi_2 >0$.  
Then $\Phi_{\apm}(z',z'',t)$ 
 with $z''=(\omega'' \sinh s, \eta'' \cosh s)$,
 see \eqref{eqn:coo1}, 
approaches to boundary points
 in $\partial X(p,q)_{\apm}^{\operatorname{reg}}$, 
 as $t \to 0$ and $s \to \infty$
 with constraints 
\[
  C_1< e^{s} \sinh t< C_2
\qquad
\text{for some $0< C_1 < C_2$}, 
\]
because 
\[
  \xi_1 =e^{-s} \sinh t, \quad \xi_2 = e^s \sinh t.  
\]
Then it follows from \eqref{eqn:L2asym}
 that the $K'$-finite function $h$ has an asymptotic behavior
\begin{equation}
\label{eqn:hasym}
  h(z',z'')=a(z',\omega'',\eta'') e^{-(\lambda''+\rho'')s}(1+se^{-2s}O(1))
\end{equation}
as $s \to \infty$
 for some analytic function $a(z',\omega'',\eta'')$, 
 and therefore
 $F=T_{\apm,\lambda}^{\lambda',\lambda''}h$ in $U \cap X(p,q)_{\apm}$
 behaves as 
\[
  O(e^{-(\lambda''+\rho'')s}(\sinh t)^{\lambda''-\rho''})
 =O(\xi_1^{\lambda''}\xi_2^{-\rho''})
\]
near the boundary $\xi_1 \downarrow 0$, 
 whereas $F \equiv 0$ for $\xi_1 <0$.  
Since $\lambda''>0$ and since $\Delta_{X(p,q)}$ takes 
 the form \eqref{eqn:Lapxi}, 
 the distribution $\Delta_{X(p,q)}F$
 is actually a locally integrable function on $U$.  
Since $F$ solves \eqref{eqn:Laplmd}
 in $U \setminus \partial X(p,q)_{\apm}$ 
 in the usual sense, 
 so does $F$ in $U$ in the distribution sense.  

\vskip 1pc
{\bf{Case II.}}\enspace
Next, 
 we deal with a neighbourhood $U$
 of a point at $\partial X(p,q)_{\apm}^{\operatorname{sing}}$.  
In this case, 
 we use 
$
  (z',z'',t) \in X(p',q') \times X(q'', p'') \times [0, \infty)
$
 as coordinates of $U \cap \overline{X(p,q)_{\apm}}$
 via $\Phi_{\apm}$.

Since $F$ behaves as $O(t^{\lambda''-\rho''})$ 
 when $t$ tends to zero, 
 so does $Y_1 F$ as $O(t^{\lambda''-\rho''-1})$
 and $Y_1 Y_2 F$ as $O(t^{\lambda''-\rho''-2})$
 for any vector fields $Y_1$, $Y_2$ on $X(p,q)$.  
In view of the formula \eqref{eqn:dmu+-} of the measure $d \mu_{+-}(t)$, 
 these functions belong to $L_{\operatorname{loc}}^1({\mathbb{R}}, d \mu_{+-}(t))$
 if
\[
   (\lambda''-\rho''-2)+ (2\rho''+1)>-1, 
\]
which is automatically satisfied 
 because $\lambda'' >0$.  
Thus $F$ is a weak solution to \eqref{eqn:Laplmd}
 near the boundary $\partial X(p,q)_{\delta\varepsilon}$
 when $(\delta, \varepsilon)=(+,-)$.

The other cases $(\delta, \varepsilon)=(+,+)$ and $(-,+)$ are similar.  
Thus Proposition \ref{prop:weakSol} is proved.  
\end{proof}

\section{Exhaustion of holographic operators}
\label{sec:5}

Let $\Pi \in \widehat G$ be any discrete series representation
 for the pseudo-Riemannian space form
 $G/H\simeq X(p,q)$.  
In this section we prove
 that discrete spectra of the restriction 
 $\Pi|_{G'}$ are exhausted by \eqref{eqn:2.1.1}
 counted with multiplicities, 
 hence complete the proof of Theorem \ref{thm:2002}.

To be precise, 
 we recall from Proposition \ref{prop:discX}
 that any $\Pi \in \Disc{G/H}$ is of the form 
 $\Pi = \pi_{+,\lambda}^{p,q}$
 for some $\lambda \in A_+(p,q)$, 
 and from Proposition \ref{prop:3.1.1}
 that $\pi \in \widehat{G'}$ satisfying $\Hom_{G'}(\pi, \Pi|_{G'}) \ne \{0\}$
 must be of the form 
 $\pi=\pi_{\delta,\lambda'}^{p',q'} \boxtimes \pi_{\varepsilon,\lambda''}^{p'',q''}$
 for some $(\lambda',\lambda'')
 \in A_{\delta}(p',q') \times A_{\varepsilon}(p'',q'')$
 with $(\delta, \varepsilon) \in \{(-, +), (+, +), (+, -)\}$.  
We show
 that $(\lambda',\lambda'')$ is actually 
 an element of $\Lambda_{\delta\varepsilon}(\lambda)$.  
More strongly, 
 we prove:
\begin{theorem}
\label{thm:4.2}
Suppose that $\lambda \in A_+(p,q)$
 and 
 $(\lambda', \lambda'')
 \in A_{\delta}(p',q') \times A_{\varepsilon}(p'',q'')$.  
Then, we have
$$
\Hom_{G'}(\pi_{\delta, \lambda'}^{p',q'} \boxtimes \pi_{\varepsilon,\lambda''}^{p'',q''},
 \pip{p}{q}{\lambda}|_{G'})
  \simeq \begin{cases} \Bbb C T_{\delta\varepsilon, \lambda}^{\lambda', \lambda''} &
                \text{ if } (\lambda', \lambda'')  \in \Lambda_{\delta\varepsilon}(\lambda),
          \\
                0 & \text{ otherwise}.
         \end{cases}
$$
\end{theorem}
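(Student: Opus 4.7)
The plan is to reduce the intertwiner-space computation to a one-dimensional ODE problem via the discrete-series realization of $\pi_{+,\lambda}^{p,q}$ on $L^2(X(p,q))_\lambda$. Fix $\Phi\in\Hom_{G'}(\pi_{\delta,\lambda'}^{p',q'}\boxtimes\pi_{\varepsilon,\lambda''}^{p'',q''},\pi_{+,\lambda}^{p,q}|_{G'})$, realize the target as $L^2(X(p,q))_\lambda$ via Definition-Theorem~\ref{def:pilmd}(iv), and use Proposition~\ref{prop:3.1.2} to observe that every $F$ in the image of $\Phi$ is supported in $\overline{X(p,q)_{\delta\varepsilon}}$. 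Pull $F$ back via the $G'$-equivariant diffeomorphism $\Phi_{\delta\varepsilon}$ of \eqref{eqn:mapmp}--\eqref{eqn:mappm}; since $G'$ acts trivially on the transverse coordinate and each of the eigenspaces $L^2(X(p',q')_\delta)_{\lambda'}$ and $L^2(X(p'',q'')_\varepsilon)_{\lambda''}$ is irreducible under $G'$ (Proposition~\ref{prop:discX}), Schur's lemma forces
$$
(F\circ\Phi_{\delta\varepsilon})(z',z'',t)=h(z',z'')\,\psi(t),
$$
with $h\in L^2(X(p',q')_\delta)_{\lambda'}\widehat\otimes L^2(X(p'',q'')_\varepsilon)_{\lambda''}$ canonically attached to $\Phi$ and $\psi$ a function of the transverse variable alone.

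Next impose the Laplace eigenequation. Setting $\psi=S_{\lambda',\lambda''}\varphi$ as in \eqref{eqn:S}, the decomposition of $\Delta_{X(p,q)}$ established in the proof of Proposition~\ref{prop:smoothSol} shows that \eqref{eqn:Laplmd} on $X(p,q)_{\delta\varepsilon}$ is equivalent to the Jacobi ODE \eqref{eqn:JacobiODE} for $\varphi$. Combining the $L^2$-condition with the exponent analysis at the regular singular point of \eqref{eqn:4.4.6} (exponents $0,-\lambda''$) at the singular-stratum end forces $\varphi$ to be a scalar multiple of the Jacobi function \eqref{eqn:JacobiF}. Consequently $F|_{X(p,q)_{\delta\varepsilon}}$ is already a scalar multiple of $T_{\delta\varepsilon,\lambda}^{\lambda',\lambda''}h|_{X(p,q)_{\delta\varepsilon}}$, and the outstanding question is when the extension of this $F$ by zero to all of $X(p,q)$ is a weak solution of \eqref{eqn:Laplmd}.

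The weak-solution property across the singular stratum $\partial X(p,q)_{\delta\varepsilon}^{\operatorname{sing}}$ is automatic because $\lambda''>0$ (as in Case~II of the proof of Proposition~\ref{prop:weakSol}). The decisive constraint comes from the regular hypersurface stratum $\partial X(p,q)_{\delta\varepsilon}^{\operatorname{reg}}$: in the normal coordinates \eqref{eqn:coordxi} and with $\Delta_{X(p,q)}$ in the form \eqref{eqn:Lapxi}, a distributional solution must have vanishing boundary-jump across $\xi_1=0$. Controlling how $h$ approaches this hypersurface through the $K'$-finite asymptotics \eqref{eqn:L2asym} for $L^2(X(p'',q'')_\varepsilon)_{\lambda''}$, and invoking the classical Kummer connection formul\ae\ between the two regular singularities of \eqref{eqn:4.4.6}, one computes the jump as an explicit product of Gamma factors whose zero set coincides exactly with the arithmetic-progression condition defining $\Lambda_{\delta\varepsilon}(\lambda)$. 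Hence if $(\lambda',\lambda'')\notin\Lambda_{\delta\varepsilon}(\lambda)$ the jump is nonzero and $\Phi=0$; otherwise the extension is a genuine weak solution (reproducing Proposition~\ref{prop:weakSol}) and $\Phi\in\mathbb{C}\cdot T_{\delta\varepsilon,\lambda}^{\lambda',\lambda''}$.

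The main obstacle is this final step: translating the distributional jump condition at $\partial X(p,q)_{\delta\varepsilon}^{\operatorname{reg}}$ into the Gamma-factor identity. It requires carefully matching the two-scale asymptotics of $F$ (via the coordinate changes \eqref{eqn:coo1}--\eqref{eqn:coo2} and Lemma~\ref{lem:1914194}), controlling $h$ through \eqref{eqn:L2asym}, and applying hypergeometric connection formul\ae\ to pass between the two regular singularities of \eqref{eqn:4.4.6}. The combinatorial payoff is precisely the parity and integrality conditions defining $\Lambda_{\delta\varepsilon}(\lambda)$, and the theorem follows.
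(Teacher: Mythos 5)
Your skeleton (support in $\overline{X(p,q)_{\delta\varepsilon}}$ via Proposition \ref{prop:3.1.2}, separation of variables, the Jacobi ODE, Kummer's connection formula, and a distributional analysis at the boundary strata) is the same as the paper's, but you have inverted where the two constraints act, and this is a genuine gap. Your claim that the $L^2$-condition together with the exponents $0,-\lambda''$ at the singular-stratum end $t\to 0$ already forces $\varphi$ to be a multiple of the Jacobi function $u_{1(0)}=\varphi_{i\lambda}^{(\lambda'',\lambda')}$ fails exactly in the delicate case $\lambda''=\tfrac12$: there both exponent solutions $u_{1(0)}$ and $u_{2(0)}$ are square-integrable near $t=0$ against $d\mu^{\lambda',\lambda''}$, so nothing is pinned down at that end. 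The paper pins the transverse factor at the \emph{other} end: square-integrability as $t\to\infty$ forces $\psi\propto S_{\lambda',\lambda''}(u^-_{(\infty)})$ (Lemma \ref{lem:psi+-}), and then Kummer's relation plus the $L^2$-condition near $t=0$ (Lemmas \ref{lem:Kummer} and \ref{lem:KummerL2}) force the connection coefficient $b(\lambda',\lambda'',\lambda)$ to vanish, i.e.\ $(\lambda',\lambda'')\in\Lambda_{\delta\varepsilon}(\lambda)$, unless $\lambda''=\tfrac12$. You never invoke the $L^2$-condition at $t\to\infty$, which is precisely the step that produces the Gamma-factor constraint in the generic case $\lambda''\ge 1$.

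Consequently your ``decisive constraint'' is also misplaced: with $\varphi=u_{1(0)}$ and $\lambda''>0$, the extension by zero behaves like $O(\xi_1^{\lambda''}\xi_2^{-\rho''})$ near $\partial X(p,q)_{\delta\varepsilon}^{\operatorname{reg}}$, and in the form \eqref{eqn:Lapxi} the distribution $\Delta_{X(p,q)}F$ is locally integrable there \emph{irrespective} of whether $(\lambda',\lambda'')\in\Lambda_{\delta\varepsilon}(\lambda)$ (this is Case I of the proof of Proposition \ref{prop:weakSol}, which uses only $\lambda''>0$); for such an $F$ the obstruction when $(\lambda',\lambda'')\notin\Lambda_{\delta\varepsilon}(\lambda)$ is failure of square-integrability in $t$ at infinity, not a boundary jump. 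The jump/delta mechanism you describe is the one the paper does use, but only in the residual case $\lambda''=\tfrac12$ and applied to the genuinely $L^2$ candidate built from $u^-_{(\infty)}$: the nonvanishing $u_{2(0)}$-component ($B\ne 0$ in Lemma \ref{lem:Heviside}) produces a $\xi_1^{\lambda''-1}$-type term, and Lemma \ref{lem:191496} shows $\Delta_{X(p,q)}(Th)$ is then not locally integrable. As written, your argument neither establishes necessity of the $\Lambda$-condition for $\lambda''\ge 1$ (wrong end of the ODE) nor covers $\lambda''=\tfrac12$ (where your first reduction breaks down). Two smaller points: the separation of variables should be derived from the two-dimensional solution space of the ODE in $t$ followed by Schur's lemma (a priori one gets $h_+u^+_{(\infty)}+h_-u^-_{(\infty)}$ with two different ``coefficient'' functions), and the degenerate case $p'p''=0$ requires the separate discretely-decomposable argument quoted from \cite{xk:1}.
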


We already know in \cite{xk:1}
 that the direct sum \eqref{eqn:2.1.1} equals the whole restriction $\Pi|_{G'}$
 if $p'=0$ or $p''=0$.  
In this case, 
 $\Pi = \pi_{+, \lambda}^{p,q}$ is $K'$-{\it{admissible}}
 ({\it{cf}}. Section \ref{subsec:algdeco}), 
 and the multiplicity of each $K'$-type occurring in $\Pi$
 coincides with 
 that in \eqref{eqn:2.1.1}.  
Hence the restriction $\Pi|_{G'}$ is discretely decomposable
 and is isomorphic to the direct sum \eqref{eqn:2.1.1}.  
Thus, 
 we shall assume $p' p''>0$ from now on.

The rest of this section is devoted to the proof
 of Theorem \ref{thm:4.2}
 in the case $p' p'' >0$
 and $(\delta, \varepsilon)=(+,-)$.  
The other cases
 where $(\delta, \varepsilon)=(-,+)$ or $(+,+)$
 are similar.  

\subsection{Kummer's relation}
The hypergeometric differential equation \eqref{eqn:4.4.6}
 has a regular singularity 
 also at $z=\infty$, 
 and its exponents 
  are $\frac1 2 (\lambda'+\lambda''+1-\lambda)$
 and $\frac1 2 (\lambda'+\lambda''+1+\lambda)$.
Suppose $\lambda \ne 0$.  
We write $g_{(\infty)}^+(z)$ and $g_{(\infty)}^-(z)$
 for the unique solutions to \eqref{eqn:4.4.6}
 such that
\begin{equation}
\label{eqn:ginfty}
  \lim_{z \to \infty} (-z)^{\frac{\lambda'+\lambda''+1 \mp \lambda}{2}}
  g_{(\infty)}^{\pm}(z)=1, 
\end{equation}
and set
\begin{equation}
\label{eqn:uinfty}
 u_{(\infty)}^{\pm}(t):=g_{(\infty)}^{\pm}(-\sinh^2 t). 
\end{equation}

\begin{lemma}
[Kummer's relation]
\label{lem:Kummer}
Suppose $\lambda \ne 0, -1, -2, \dots$
 and $\lambda'' \ne 0$.  
\begin{enumerate}
\item[{\rm{(1)}}]
There exist uniquely
 $a(\lambda', \lambda'', \lambda)$, 
 $b(\lambda', \lambda'', \lambda) \in {\mathbb{C}}$
 such that
\begin{equation}
\label{eqn:gKummer}
  g_{(\infty)}^-(z)
  =
  a(\lambda', \lambda'', \lambda)g_{1(0)}(z)
  +
  b(\lambda', \lambda'', \lambda)e^{i \pi \lambda''}
  g_{2(0)}(z).  
\end{equation}
\item[{\rm{(2)}}]
If $\lambda'' \ne 0,-1,-2,\dots$, 
 then 
\begin{equation}
\label{eqn:1914110}
b(\lambda', \lambda'', \lambda)
=
  \frac{\Gamma(\lambda'') \Gamma(1+\lambda)}
       {\Gamma(\frac{-\lambda'+ \lambda''+ \lambda+1}2)
        \Gamma(\frac{\lambda'+ \lambda''+ \lambda+1}2)}.  
\end{equation}
Moreover, 
 if $\lambda'' \not\in{\mathbb{Z}}$, 
 then $a(\lambda', \lambda'', \lambda)=b(\lambda', -\lambda'', \lambda)$.  
\end{enumerate}
\end{lemma}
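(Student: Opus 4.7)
The plan is to deduce the lemma from the classical Gauss--Kummer connection analysis for the hypergeometric equation \eqref{eqn:4.4.6}. For part (1), the hypothesis $\lambda'' \ne 0$ makes the exponents $0$ and $-\lambda''$ at the regular singularity $z=0$ distinct, so $\{g_{1(0)}, g_{2(0)}\}$ is a basis of the two-dimensional solution space on any simply connected domain avoiding $\{0,1\}$. Existence and uniqueness of $a(\lambda',\lambda'',\lambda)$ and $b(\lambda',\lambda'',\lambda)$ is then automatic; the prefactor $e^{i\pi\lambda''}$ in \eqref{eqn:gKummer} is merely a normalization chosen so that the formula for $b$ in part (2) emerges in a symmetric form.

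For part (2), denote the parameters of \eqref{eqn:4.4.6} by $\alpha=\frac{\lambda'+\lambda''+1-\lambda}{2}$, $\beta=\frac{\lambda'+\lambda''+1+\lambda}{2}$, $\gamma=\lambda''+1$. The normalization \eqref{eqn:ginfty} identifies $g_{(\infty)}^-(z)$ with Kummer's solution at $z=\infty$, namely $(-z)^{-\beta}\,{}_2F_1(\beta, \beta-\gamma+1; \beta-\alpha+1; 1/z)$. Applying the classical connection formula (e.g.~DLMF 15.10.25, or Erd{\'e}lyi, \emph{Higher Transcendental Functions} Vol.~I, \S 2.10) expresses this Kummer solution as an explicit linear combination of $g_{1(0)}(z)={}_2F_1(\alpha,\beta;\gamma;z)$ and $g_{2(0)}(z)=z^{1-\gamma}\,{}_2F_1(\alpha-\gamma+1,\beta-\gamma+1;2-\gamma;z)$, with coefficients that are ratios of Gamma functions. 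After absorbing the branch factor relating $(-z)^{1-\gamma}$ to $z^{1-\gamma}$ into the prefactor $e^{i\pi\lambda''}$, and using the elementary identities $\gamma-1=\lambda''$, $\beta-\alpha+1=\lambda+1$, $\beta=\frac{\lambda'+\lambda''+\lambda+1}{2}$, $\gamma-\alpha=\frac{-\lambda'+\lambda''+\lambda+1}{2}$, the coefficient of $g_{2(0)}$ collapses precisely to \eqref{eqn:1914110}.

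For the symmetry $a(\lambda',\lambda'',\lambda)=b(\lambda',-\lambda'',\lambda)$ when $\lambda''\notin\mathbb{Z}$, I would invoke Kummer's symmetry $f(z)\mapsto z^{1-\gamma}f(z)$ of \eqref{eqn:4.4.6}, which sends the parameter triple $(\alpha,\beta,\gamma)$ to $(\alpha-\gamma+1,\beta-\gamma+1,2-\gamma)$, i.e.\ implements $\lambda''\mapsto -\lambda''$. Under this transformation $g_{1(0)}$ and $g_{2(0)}$ are interchanged (up to the explicit factor $z^{-\lambda''}$ and the normalizations \eqref{eqn:g0}), while $g_{(\infty)}^-$ transforms in a controlled way into its $(-\lambda'')$-counterpart; comparing coefficients in the two incarnations of \eqref{eqn:gKummer} yields the claimed identity. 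The main obstacle throughout is simply bookkeeping: fixing consistent branches of $(-z)^{\sigma}$ and $z^{\sigma}$ on the slit plane $\mathbb{C}\setminus[0,\infty)$ so that the normalizations \eqref{eqn:g0} and \eqref{eqn:ginfty} exactly reproduce the prefactor $e^{i\pi\lambda''}$ in \eqref{eqn:gKummer} and so that the Gamma-function ratios come out in the stated symmetric form.
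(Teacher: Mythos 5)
Your generic-case argument follows essentially the same route as the paper (reduce to the classical Kummer connection formula at $z=\infty$ versus $z=0$, read off the Gamma-factor coefficient of $g_{2(0)}$, and get $a(\lambda',\lambda'',\lambda)=b(\lambda',-\lambda'',\lambda)$ from the $\lambda''\mapsto-\lambda''$ symmetry), but there is a genuine gap: the lemma asserts \eqref{eqn:1914110} for all $\lambda''\ne 0,-1,-2,\dots$, in particular for $\lambda''=m\in{\mathbb{N}}_+$, and your derivation does not cover that case. When $\lambda''=m$ the lower parameter $2-\gamma=1-\lambda''$ of the second Frobenius solution is a nonpositive integer, so $g_{2(0)}$ does not take the form \eqref{eqn:g20}; the second solution at $z=0$ involves a logarithmic term, and the normalization \eqref{eqn:g0} alone no longer pins it down (one may add any multiple of $g_{1(0)}$), so even the meaning of $b$ in \eqref{eqn:gKummer} requires a choice. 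The connection formula you cite (DLMF 15.10.25, Erd\'elyi \S 2.10) degenerates there --- its coefficient of $g_{1(0)}$ contains $\Gamma(-\lambda'')$ --- and a naive ``continuity in $\lambda''$'' appeal fails because $g_{2(0)}$ as given by \eqref{eqn:g20} itself has a pole at $\lambda''=m$.

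This integer case is where the paper's proof does its real work: it defines $g_{2(0)}$ at $\lambda''=m$ by the regularized limit $\lim_{\lambda''\to m}\bigl(g_{2(0)}(z)-\tfrac{P_m}{\lambda''-m}\,g_{1(0)}(z)\bigr)$, where $P_m$ absorbs the pole, and then observes that this change of basis can only alter the coefficient $a$ in \eqref{eqn:gKummer} while leaving $b$ invariant, so the formula \eqref{eqn:1914110} persists by analytic continuation. The case matters for the application, not just for completeness: in Section \ref{subsec:4.4} one has $\lambda''\in A_-(p'',q'')$, which is a positive integer whenever $p''+q''$ is even, and the vanishing criterion $b(\lambda',\lambda'',\lambda)=0\Leftrightarrow\lambda'-\lambda''-\lambda-1\in 2{\mathbb{N}}$ is used precisely there (Lemma \ref{lem:KummerL2}). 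To complete your proof, keep your generic computation and the Kummer symmetry for $\lambda''\notin{\mathbb{Z}}$, but add the continuation argument (or an equivalent treatment of the logarithmic solution) for $\lambda''\in{\mathbb{N}}_+$.
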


\begin{proof}
The first statement is clear 
because $g_{1(0)}(z)$ and $g_{2(0)}(z)$ are linearly independent
 solutions to \eqref{eqn:4.4.6}.

To see the second statement, 
 we begin with the generic case
 where $\lambda \not \in \{0,-1,-2,\cdots\}$
 and $\lambda'' \not \in {\mathbb{Z}}$.  
Then we have 
\begin{align}
g_{(\infty)}^-(z)
=&(-z)^{\frac{\lambda'+ \lambda''+\lambda+1}2}
  {}_2 F_1(\tfrac{\lambda'+ \lambda''+ \lambda+1}2, 
           \tfrac{\lambda'- \lambda''+ \lambda+1}2;
           1+\lambda;z^{-1}), 
\notag
\\
g_{1(0)}(z)
=&{}_2 F_1(\tfrac{\lambda'+ \lambda''- \lambda+1}2, 
           \tfrac{\lambda'+ \lambda''+ \lambda+1}2;
           1+\lambda'';z), 
\label{eqn:g10}
\\
g_{2(0)}(z)
=&z^{-\lambda''} {}_2 F_1(\tfrac{\lambda'- \lambda''- \lambda+1}2, 
           \tfrac{\lambda'- \lambda''+ \lambda+1}2;
           1-\lambda'';z), 
\label{eqn:g20}
\end{align}
 and Kummer's relation \cite[2.9 (39)]{xerd}
 shows 
$
  a(\lambda',\lambda'',\lambda)=b(\lambda',-\lambda'',\lambda) 
$
with the formula \eqref{eqn:1914110}
 for $b(\lambda',\lambda'',\lambda)$.

When $\lambda'' = m \in {\mathbb{N}}_+$, 
 $g_{1(0)}(z)$ remains to be the same \eqref{eqn:g10}
 but $g_{2(0)}(z)$ does not take the form \eqref{eqn:g20}.  
In fact, 
 $g_{2(0)}(z)$ contains a logarithmic term, 
 and is given by the analytic continuation:
\[
  \lim_{\lambda'' \to m}
  (g_{2(0)}(z)-\frac{P_m}{\lambda''-m} g_{1(0)}(z))
\]
where $P_m \equiv P_m(\lambda',\lambda) \in {\mathbb{C}}$
 is determined by
\[
  \lim_{\lambda'' \to m}
  (\lambda''-m)g_{2(0)}(z)
  =
  P_m g_{1(0)}(z).  
\]
Then the change of basis may alter
 the coefficient $a(\lambda',\lambda'',\lambda)$ 
 in \eqref{eqn:gKummer}
 but leaves $b(\lambda',\lambda'',\lambda)$ invariant.  
Thus the lemma is proved.  
\end{proof}

For $\lambda',\lambda'' \in {\mathbb{R}}$,
 we set a measure $d \mu^{\lambda',\lambda''}$ on ${\mathbb{R}}$
 by 
\[
  d \mu^{\lambda',\lambda''}(t)
   :=
  (\cosh t)^{2\lambda'+1}(\sinh t)^{2\lambda''+1} d t.  
\]
We note that $d \mu_{\apm}(t)=d \mu^{\rho',\rho''}(t)$, 
 see \eqref{eqn:dmu+-}, 
 and 
\begin{equation}
\label{eqn:SL2}
u \in L^2((0,\infty), d \mu^{\lambda',\lambda''}(t))
 \Leftrightarrow
 S_{\lambda',\lambda''}(u) \in L^2((0,\infty), d \mu_+(t))
\end{equation}
by the definition of the transform \eqref{eqn:S}
 of $S_{\lambda',\lambda''}$.

We need the following:
\begin{lemma}
\label{lem:KummerL2}
Suppose $\lambda>0$, $\lambda'>-1$, $\lambda''>-1$.  
Then 
$
  u_{(\infty)}^-(t) \in L^2((0, \infty), d \mu^{\lambda', \lambda''}(t))
$
 if and only if $-1 < \lambda''<1$
 or $\lambda'-\lambda''-\lambda-1 \in 2 {\mathbb{N}}$.  
\end{lemma}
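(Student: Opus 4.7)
The plan is to test $L^2$-integrability of $u_{(\infty)}^-$ at the two endpoints $t \to \infty$ and $t \to 0^+$ separately: at infinity the decay is dictated directly by the defining normalization of $g_{(\infty)}^-$, while at $0$ one passes via Kummer's relation (Lemma \ref{lem:Kummer}) to the basis $\{u_{1(0)}, u_{2(0)}\}$ adapted to the origin and extracts a condition on the coefficient $b(\lambda',\lambda'',\lambda)$.

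First I would handle the endpoint at infinity. From $\lim_{z \to \infty}(-z)^{(\lambda'+\lambda''+1+\lambda)/2} g_{(\infty)}^-(z) = 1$ and the substitution $z = -\sinh^2 t$, one gets $u_{(\infty)}^-(t) = O((\sinh t)^{-(\lambda'+\lambda''+1+\lambda)})$ as $t \to \infty$. Since $d\mu^{\lambda',\lambda''}(t)$ is of order $e^{(2\lambda'+2\lambda''+2)t}\,dt$ at infinity, the integrand $|u_{(\infty)}^-(t)|^2\,d\mu^{\lambda',\lambda''}(t)$ is of order $e^{-2\lambda t}\,dt$, which is integrable since $\lambda > 0$. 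Thus the endpoint at infinity imposes no condition, and all the action takes place at the origin.

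At $t \to 0^+$, Kummer's relation gives
\[
u_{(\infty)}^-(t) = a(\lambda',\lambda'',\lambda)\,u_{1(0)}(t) + b(\lambda',\lambda'',\lambda)\,e^{i\pi\lambda''}\,u_{2(0)}(t).
\]
By the normalizations \eqref{eqn:g0}, $u_{1(0)}(t) = 1 + O(t^2)$ is bounded near $0$, while $e^{i\pi\lambda''} u_{2(0)}(t) = (\sinh t)^{-2\lambda''}(1 + O(t^2))$. Against the measure $d\mu^{\lambda',\lambda''}(t) \sim t^{2\lambda''+1}\,dt$ at $0$, the regular contribution is always square-integrable since $\lambda'' > -1$, whereas the singular contribution is of order $|b|^2\,t^{-2\lambda''+1}$, which is integrable near $0$ iff $\lambda'' < 1$. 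Hence $u_{(\infty)}^- \in L^2((0,\infty), d\mu^{\lambda',\lambda''})$ iff either $\lambda'' < 1$ or $b(\lambda',\lambda'',\lambda) = 0$.

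The final step is to identify the vanishing locus of $b$ from the explicit formula \eqref{eqn:1914110}. Under our hypotheses $\lambda'+\lambda''+\lambda+1 > 0$, so $\Gamma(\tfrac{\lambda'+\lambda''+\lambda+1}{2})$ is finite and nonzero; hence $b = 0$ precisely when $\Gamma(\tfrac{-\lambda'+\lambda''+\lambda+1}{2})$ has a pole, i.e., $\lambda'-\lambda''-\lambda-1 \in 2\mathbb{N}$. Combined with $\lambda'' > -1$ this yields the stated dichotomy. The main technical subtlety is the degenerate case $\lambda'' \in \mathbb{N}_+$, where $g_{2(0)}$ picks up a logarithmic term and the formula for $b$ must be read as an analytic continuation; one then has to check that $(\sinh t)^{-2\lambda''}$ remains the correct leading order (with harmless additional $\log$ factors) so that the $L^2$ criterion at $0$ is unchanged, which follows from the invariance of $b$ noted in the proof of Lemma \ref{lem:Kummer}.
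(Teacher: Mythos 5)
Your proposal is correct and follows essentially the same route as the paper: $L^2$-integrability at infinity from the normalization \eqref{eqn:ginfty} and $\lambda>0$, reduction at $t=0$ to the basis $u_{1(0)}, u_{2(0)}$ via Kummer's relation, and identification of the vanishing of $b(\lambda',\lambda'',\lambda)$ with $\lambda'-\lambda''-\lambda-1\in 2{\mathbb{N}}$ through the Gamma factors of \eqref{eqn:1914110}. One tiny imprecision: the hypotheses only give $\lambda'+\lambda''+\lambda+1>-1$, not $>0$, but in the regime where this could fail one has $\lambda''<1$ anyway, so the stated dichotomy is unaffected.
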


\begin{proof}
By the asymptotic behavior \eqref{eqn:ginfty} of 
 $g_{(\infty)}^-(z)$ as $z \to \infty$, 
 we have
\[
  u_{(\infty)}^-(t)
 =g_{(\infty)}^-(-\sinh^2 t)
  \in 
  L^2([1, \infty), d \mu^{\lambda', \lambda''}(t))
\]
because $\lambda >0$.  
Likewise,
 by the asymptotic behavior \eqref{eqn:g0} of $g_{1(0)}(z)$
 and $g_{2(0)}(z)$
 as $z \to 0$, 
\begin{align*}
u_{1(0)}\in L^2((0, 1], d \mu^{\lambda', \lambda''}(t))
&\Leftrightarrow
\operatorname{Re}\lambda'' >-1,
\\
u_{2(0)}\in L^2((0, 1], d \mu^{\lambda', \lambda''}(t))
&\Leftrightarrow
\operatorname{Re}\lambda'' <1.  
\end{align*}
In view of the Kummer's relation \eqref{eqn:gKummer}, 
\[
   u_{(\infty)}^-(t)
   =a(\lambda', \lambda'', \lambda) u_{1(0)}(t)
   +
    b(\lambda', \lambda'', \lambda) u_{2(0)}(t)
\]
 belongs to $L^2((0, \infty), d \mu^{\lambda', \lambda''}(t))$
 if and only if 
 $-1< \lambda''<1$
 or $b(\lambda', \lambda'', \lambda)=0$.  
The latter condition amounts to 
 $\lambda'-\lambda''-\lambda-1 \in 2{\mathbb{N}}$
 by Lemma \ref{lem:Kummer} (2).  
Thus the lemma is proved.  
\end{proof}
\subsection{Possible form of holographic operators}
\label{subsec:4.4}
In this section 
 we examine a possible form for a holographic operator
 $\pi \to \Pi|_{G'}$, 
 and find a necessary condition 
 on the parameter for $\Hom_{G'}(\pi,\Pi|_{G'})$
 to be nonzero.  
We begin with the following:

\begin{lemma}
\label{lem:psi+-}
Let $\lambda \in A_+(p,q)$
 and $(\lambda',\lambda'')\in A_+(p',q') \times A_-(p'',q'')$.  
Suppose 
$
   T \in \Hom_{G'}(\pi_{+,\lambda'}^{p',q'} 
                   \boxtimes \pi_{-,\lambda''}^{p'',q''}, 
                   \pi_{+,\lambda}^{p,q}|_{G'})
$.  
Then in the geometric realizations
 of these representations
 on pseudo-Riemannian space forms
 (Section \ref{subsec:Xpq}), 
 $T$ must be of the following form:
 there exists $c \in {\mathbb{C}}$
 such that 
\[
T h =
\begin{cases}
c(h S_{\lambda',\lambda''}(u_{(\infty)}^-)) \circ \Phi_{\apm}^{-1}
\quad
&\text{on $X(p,q)_{\apm}$}, 
\\
0
&\text{otherwise}, 
\end{cases}
\]
for all $h \in L^2(X(p',q'))_{\lambda'} \widehat{\otimes} L^2(X(q'',p''))_{\lambda''}$. \end{lemma}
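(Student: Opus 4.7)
The plan is a three-step reduction: (i) localize the support of $Th$ via Proposition \ref{prop:3.1.2}, (ii) separate variables using $G'$-equivariance, and (iii) determine the radial factor $\psi(t)$ from the Laplace equation together with the $L^2$ condition at $t=\infty$. Step (i) is immediate: Proposition \ref{prop:3.1.2} applied with $\kappa=(+,-)$ gives $\operatorname{Supp}(Th)\subset\overline{X(p,q)_{\apm}}$ for every $h$, so only the restriction of $T$ to $X(p,q)_{\apm}$ needs to be analysed, the rest being filled in by zero.

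For Step (ii), in the coordinates \eqref{eqn:mappm} the group $G'=O(p',q')\times O(p'',q'')$ acts on $X(p,q)_{\apm}$ by its natural diagonal action on $X(p',q')\times X(q'',p'')$ and trivially on $t\in(0,\infty)$, and each level $\{t=t_0\}$ is a single $G'$-orbit. The measure decomposition \eqref{eqn:measure} then gives a unitary $G'$-equivariant isomorphism
\[
L^2(X(p,q)_{\apm})\;\simeq\;L^2(X(p',q'))\,\widehat\otimes\,L^2(X(q'',p''))\,\widehat\otimes\,L^2((0,\infty),d\mu_{\apm}),
\]
with $G'$ acting only on the first two tensor factors. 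Writing $\pi=\pi_{+,\lambda'}^{p',q'}\boxtimes\pi_{-,\lambda''}^{p'',q''}$, Proposition \ref{prop:discX} identifies the $\pi$-isotypic component inside the first two factors with $L^2(X(p',q'))_{\lambda'}\,\widehat\otimes\,L^2(X(q'',p''))_{\lambda''}\simeq\pi$, occurring with multiplicity one. Schur's lemma in the form $\Hom_{G'}(\pi,\pi\,\widehat\otimes\,V)\simeq V$ then yields a $\psi\in L^2((0,\infty),d\mu_{\apm})$, independent of $h$, such that $(Th)\circ\Phi_{\apm}(z',z'',t)=h(z',z'')\,\psi(t)$ for every $h\in\pi$.

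For Step (iii), because $Th\in L^2(X(p,q))_\lambda$, inserting the factorization into the radial formula \eqref{eqn:4.4.2} for $\Delta_{X(p,q)}$ and using the eigenvalue equations for the two tensor factors of $h$ reduces \eqref{eqn:Laplmd} on $X(p,q)_{\apm}$ to a scalar equation for $\psi$. The conjugation identity from the proof of Proposition \ref{prop:smoothSol}, under the substitution $\psi=S_{\lambda',\lambda''}\varphi$, converts this into the Jacobi equation \eqref{eqn:JacobiODE} for $\varphi$, whose solution space is two-dimensional and is spanned near $t=\infty$ by $u_{(\infty)}^{\pm}$. A direct estimate using \eqref{eqn:ginfty} and \eqref{eqn:dmu+-} yields
\[
\bigl|S_{\lambda',\lambda''}u_{(\infty)}^{\pm}(t)\bigr|^2\,d\mu_{\apm}(t)\;\sim\;e^{\pm 2\lambda t}\,dt\qquad\text{as }t\to\infty,
\]
so for $\lambda>0$ the $L^2$-integrability of $\psi$ forces the coefficient of $u_{(\infty)}^+$ to vanish. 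Hence $\psi=c\,S_{\lambda',\lambda''}(u_{(\infty)}^-)$ for some $c\in\mathbb{C}$, exactly as claimed.

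The main obstacle is the rigorous execution of Step (ii): the multiplicity-one claim requires ruling out further copies of $\pi$ in the continuous-spectrum part of $L^2(X(p',q'))\,\widehat\otimes\,L^2(X(q'',p''))$, which rests on Proposition \ref{prop:discX} together with the Plancherel theorem for the pseudo-Riemannian space forms $X(p',q')$ and $X(q'',p'')$. Once this input is in place, the Jacobi ODE analysis in Step (iii) is essentially mechanical.
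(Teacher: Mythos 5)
Your argument is correct, but it takes a genuinely different route from the paper at the key step. You invoke Schur's lemma at the level of unitary representations \emph{first}: you identify the $\pi$-isotypic part of $L^2(X(p,q)_{\apm})\simeq L^2(X(p',q'))\,\widehat\otimes\,L^2(X(q'',p''))\,\widehat\otimes\,L^2((0,\infty),d\mu_{\apm})$ with $\pi\,\widehat\otimes\,L^2((0,\infty),d\mu_{\apm})$, which forces $Th=h\otimes\psi$ for a single radial function $\psi$, and only then solve the ODE for $\psi$ and kill the $u_{(\infty)}^+$ component by the $L^2$ condition at $t=\infty$ (your asymptotic $e^{\pm 2\lambda t}$ is right). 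This requires the multiplicity-one statement $\dim\Hom_{G'}(\pi,L^2(X(p',q'))\,\widehat\otimes\,L^2(X(q'',p'')))=1$, i.e.\ Plancherel-type input for the two smaller space forms plus the factorization of $\Hom$ over the product group; this is true (any discrete embedding of $\pi_{+,\lambda'}^{p',q'}$ lands in the Laplacian eigenspace by the Casimir argument, and that eigenspace is irreducible by Section \ref{subsec:Xpq}), but you use it as a black box, and it is heavier machinery than the paper needs. The paper instead works with $K'$-finite $h$, first solves the $t$-equation \eqref{eqn:4.4.3} to write $\psi_{\apm}(z',z'',t)=h_+(z',z'')u_{(\infty)}^+(t)+h_-(z',z'')u_{(\infty)}^-(t)$ with coefficients \emph{a priori} depending on $(z',z'')$, kills $h_+$ by square-integrability via \eqref{eqn:measure}, and only then applies Schur's lemma to the assignment $h\mapsto h_-$ as an endomorphism of the irreducible $({\mathfrak{g}}',K')$-module, extending to all $h$ by continuity of $T$; this avoids any Plancherel input. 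What your order of steps buys is a cleaner ODE stage (the radial function $\psi$ depends on $t$ alone, so distributional-to-classical regularity is immediate), at the cost of the multiplicity-one hypothesis; the paper's order buys self-containedness at the cost of handling $(z',z'')$-dependent coefficients. If you keep your route, you should spell out why $\pi$ cannot embed into the orthocomplement of the isotypic component (an $\ell^2$-sum of copies of a space with no $\pi$-summand admits no embedding of $\pi$), rather than appealing to the full Plancherel theorem.
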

\begin{remark}
\label{rem:uinfty}
We have used the Jacobi function 
 $u_{1(0)}(t)=\varphi_{i \lambda}^{(\lambda'',\lambda')}(t)$
 \eqref{eqn:JacobiF}
 for the definition
 of the holographic operator
 $T_{\apm, \lambda}^{\lambda'',\lambda'}$ in \eqref{eqn:T}
 instead of $u_{(\infty)}^-(t)$
 as in Lemma \ref{lem:psi+-}.  
It is a part of Theorem \ref{thm:4.2}
 to show that $u_{1(0)}(t)$ is proportional 
 to $u_{(\infty)}^-(t)$
 if $(\lambda',\lambda'') \in \Lambda_+(\lambda)$.  
\end{remark}
\begin{proof}
[Proof of Lemma \ref{lem:psi+-}]
For any $h$ in $\pip{p'}{q'}{\lambda'} \boxtimes \pim{p''}{q''}{\lambda''}$, 
 we have $\operatorname{Supp} T h \subset \overline {X(p,q)_{+-}}$
 by Proposition \ref{prop:3.1.2}.

Suppose that $h$ is $K'$-finite.  
We set 
\begin{equation}
\label{eqn:psi+-}
\psi_{\apm} := S_{\lambda',\lambda''}^{-1} \circ T h \circ \Phi_{\apm}, 
\end{equation}
where $S_{\lambda',\lambda''}^{-1}$ 
 (see \eqref{eqn:S}) is applied to the last variable $t$.  
Then the following differential equations are satisfied:
\begin{multline*}
    \Delta_{X(p',q')} \psi_\apm 
    =(-(\lambda')^2 + (\rho')^2) \psi_\apm,\quad
  \Delta_{X(q',p')} \psi_\apm
  =(-(\lambda'')^2 + (\rho'')^2) \psi_\apm,
\end{multline*}
where 
$\Delta_{X(p',q')}$ acts on $z'$-variables,
 and 
$\Delta_{X(q',p')}$ on $z''$-variables.

As in the proof of Proposition \ref{prop:smoothSol}, 
 the differential equation \eqref{eqn:Laplmd} yields
 the following differential equation
 (in the sense of distribution):
\begin{equation}
\label{eqn:4.4.3}
 (L_\apm - (\lambda^2 - (\lambda' + \lambda'' + 1)^2))
  \psi_\apm(z', z'', t) = 0, 
\end{equation}
where $L_{\apm}$ is defined in \eqref{eqn:JacobiL}.  
Since $\lambda \ne 0$, 
 the solution $\psi_\apm(z', z'', t)$ is a linear combination
 of the basis $u_{(\infty)}^+(t)$ and $u_{(\infty)}^-(t)$.  
Hence $\psi_{\apm}$ is of the form
\[
\psi_{\apm}(z',z'',t)
=
h_+(z',z'') u_{(\infty)}^+(t)
+
h_-(z',z'') u_{(\infty)}^-(t)
\]
for some real analytic functions
 $h_+(z',z'')$ and $h_-(z',z'')$
 on $X(p',q') \times X(q'',p'')$. 
We observe that under the assumption $\lambda > 0$ 
 we have
\begin{equation}
 u_{(\infty)}^+(t) \not\in L^2([1, \infty); d \mu^{\lambda', \lambda''}(t)), 
\quad
 u_{(\infty)}^-(t) \in L^2([1, \infty); d \mu^{\lambda', \lambda''}(t) ).
\end{equation}
Since $\operatorname{Supp} T h \subset \overline{X(p,q)_{\apm}}$, 
 the formula \eqref{eqn:measure}
 of the invariant measure on $X(p,q)$
 and the definition \eqref{eqn:S}
 of $S_{\lambda',\lambda''}$ imply 
\[
  \|T h\|_{L^2(X(p,q))}^2
  =
  \int_{X(p',q') \times X(q'',p'')}
  \int_{0}^{\infty}
  |\psi_{\apm}(z',z'',t)|^2 
  d z' d z''
  d \mu^{\lambda', \lambda''}(t).  
\]
Thus
 we conclude from $T h \in L^2(X(p,q))$
 that $h_+(z',z'')=0$.  
In turn, 
 we have
\[
  \|T h\|_{L^2(X(p,q))}
  =
  \|h_-\|_{L^2 (X(p',q') \times X(q'',p''))}
  \|u_{(\infty)}^-\|_{L^2((0,\infty), d\mu^{\lambda',\lambda''}(t))}.  
\]
Since $T$ is a continuous map 
between the Hilbert spaces,
 we have
\begin{equation}
\label{eqn:u-}
  u_{(\infty)}^- (t) \in L^2((0,\infty), d\mu^{\lambda',\lambda''}(t))
\end{equation}
if $T \ne 0$.  
Moreover,
 $h \mapsto h_-$ is a $({\mathfrak{g}}',K')$-endomorphism
 of the irreducible $({\mathfrak{g}}',K')$-module
 $(\pi_{+,\lambda'}^{p',q'} \boxtimes \pi_{-,\lambda''}^{p'',q''})_{K'}$, 
 whence there exists $c \in {\mathbb{C}}$
 such that $h_- = c h$
 for all $K'$-finite vectors $h$
 by Schur's lemma.  
Since $T$ is a continuous map, 
 we obtain Lemma \ref{lem:psi+-}.  
\end{proof}

Next,
 we show 
 that the condition $T h \in L^2(X(p,q))$ leads us to the following:
\begin{proposition}
\label{prop:nec}
Retain $(\delta,\varepsilon)=(+,-)$.  
Suppose $\lambda \in A_+(p,q)$
 and $(\lambda',\lambda'')
 \in A_\delta(p',q') \times A_\varepsilon(p'',q'')$.  
If 
$
   \Hom_{G'}(\pi_{\delta,\lambda'}^{p',q'} \boxtimes \pi_{\varepsilon,\lambda''}^{p'',q''}, \pi_{+,\lambda}^{p,q}|_{G'}) \ne \{0\}$, 
then $\lambda'' =\frac 1 2$
 or $(\lambda', \lambda'') \in \Lambda_{\delta\varepsilon}(\lambda)$.  
\end{proposition}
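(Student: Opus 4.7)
The plan is to combine Lemma \ref{lem:psi+-}, which pins down the shape of any possible holographic operator $T$, with the $L^2$-integrability criterion of Lemma \ref{lem:KummerL2}, and then to read off what the constraint $\lambda'' \in A_-(p'',q'')$ allows in the exceptional case.

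First, let $T$ be a nonzero element of $\Hom_{G'}(\pi_{+,\lambda'}^{p',q'} \boxtimes \pi_{-,\lambda''}^{p'',q''},\, \pi_{+,\lambda}^{p,q}|_{G'})$. By Lemma \ref{lem:psi+-}, on the open set $X(p,q)_{+-}$ the image $Th$ has the explicit form $c\,(h\cdot S_{\lambda',\lambda''}(u_{(\infty)}^-))\circ \Phi_{+-}^{-1}$ for some nonzero constant $c$, and vanishes elsewhere. The proof of Lemma \ref{lem:psi+-} already extracted from $Th \in L^2(X(p,q))$, via the factorization of the measure in \eqref{eqn:measure} and the equivalence \eqref{eqn:SL2}, the conclusion
\[
   u_{(\infty)}^-(t) \in L^2((0,\infty),\, d\mu^{\lambda',\lambda''}(t)).
\]
Thus the existence of nonzero $T$ forces this integrability.

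Next, I would apply Lemma \ref{lem:KummerL2}. Its hypotheses hold because $\lambda \in A_+(p,q)$ gives $\lambda > 0$, while the definitions of $A_+(p',q')$ and $A_-(p'',q'')$ yield $\lambda'>-1$ and $\lambda''>-1$ (both are in fact at least $-\tfrac12$). Hence the lemma forces the dichotomy
\[
   -1 < \lambda'' < 1 \qquad \text{or} \qquad \lambda' - \lambda'' - \lambda - 1 \in 2{\mathbb{N}}.
\]
The second alternative is exactly the definition of $(\lambda',\lambda'') \in \Lambda_{+-}(\lambda)$ given in Section~\ref{sec:Intro}, so it suffices to show that the first alternative forces $\lambda'' = \tfrac12$.

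Finally, I analyze the first alternative. In Section~\ref{sec:5} we may assume $p''>0$, so $\lambda'' \in A_-(p'',q'') = A_+(q'',p'')$ forces $q''\geq 2$ (the set is empty otherwise), and consequently $\lambda''>0$ and $\lambda'' \in {\mathbb{Z}}+\tfrac{p''+q''}{2}$. The only element of this discrete set in the interval $(-1,1)$ is $\lambda''=\tfrac12$, which occurs precisely when $p''+q''$ is odd. This completes the proof.

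Regarding difficulty: the proposition is essentially an assembly step, since the genuinely analytical inputs -- the shape of $T$ (Lemma \ref{lem:psi+-}) and the sharp $L^2$-criterion for $u_{(\infty)}^-$ (Lemma \ref{lem:KummerL2}, resting on the Kummer relation) -- are already established. The only real care needed is the case distinction for $A_-(p'',q'')$ under the standing assumption $p'p''>0$; the slight subtlety is that the ``exceptional'' value $\lambda''=\tfrac12$ genuinely can occur (when $p''+q''$ is odd) and will be handled separately in the subsequent sharpening to Theorem~\ref{thm:4.2}.
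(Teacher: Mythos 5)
Your proposal is correct and follows essentially the same route as the paper: extract the integrability $u_{(\infty)}^-(t)\in L^2((0,\infty),d\mu^{\lambda',\lambda''}(t))$ from Lemma \ref{lem:psi+-}, invoke Lemma \ref{lem:KummerL2} to get the dichotomy, and then use the structure of $A_-(p'',q'')$ under the standing assumption $p''>0$ to see that the only value in $(-1,1)$ is $\lambda''=\tfrac12$. Your added parity remark (that $\lambda''=\tfrac12$ requires $p''+q''$ odd) is a harmless refinement of the same argument.
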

In Section \ref{subsec:Heviside}, 
 we treat the case $\lambda''=\frac 1 2$.  

\begin{proof}
As we have seen \eqref{eqn:u-}
 in the proof of Lemma \ref{lem:psi+-}, 
 $u_{(\infty)}^-(t) \in L^2((0,\infty), d \mu^{\lambda',\lambda''}(t))$.  
Hence $-1 < \lambda'' < 1$
 or $\lambda'-\lambda''-\lambda-1 \in 2 {\mathbb{N}}$
 by Lemma \ref{lem:KummerL2}.  
Since $\lambda'' \in A_-(p'',q'')$ with $p''>0$ 
 (see \eqref{eqn:A-}), 
 the only possible $\lambda''$ with $\lambda''<1$
 is $\lambda''=\frac 1 2$.  
(We note that $\lambda''=-\frac 1 2$ occurs only
 when $(p'',q'')=(0,1)$.)  
Thus Proposition \ref{prop:nec} is proved.  
\end{proof}

\subsection{The case $\lambda''=\frac 1 2$}
\label{subsec:Heviside}

The case $\lambda''=\frac 1 2$ is delicate 
 because there exists a continuous $G'$-homomorphism
\[
  T \colon \pi_{+,\lambda'}^{p',q'} \boxtimes \pi_{-,\lambda''}^{p'',q''}
  \to 
  L^2(X(p,q)_{\apm})
\]
such that the image of $T$ consists of weak solutions
 to \eqref{eqn:Laplmd}
 in $L^2(X(p,q)_{\apm})$
 without the assumption 
 $(\lambda',\lambda'') \in \Lambda_{\apm}(\lambda)$.  
However, 
 we shall see 
 that $T h$ {\it{cannot}} be a weak solution
 to \eqref{eqn:Laplmd} in $L^2(X(p,q))$
 unless $(\lambda', \lambda'') \in \Lambda_{\apm}(\lambda)$.  
For this, 
 it suffices to show the following:
\begin{lemma}
\label{lem:Heviside}
In the setting of Lemma \ref{lem:psi+-}, 
 suppose $\lambda''=\frac 1 2$
 and $(\lambda', \lambda'') \not \in \Lambda_{\apm}(\lambda)$.  
Then the distribution $\Delta_{X(p,q)}(T h)$
 is not a locally integrable function on $X(p,q)$
 for any nonzero $K'$-finite function $h$.  
\end{lemma}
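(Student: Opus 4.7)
The plan is to show that $\Delta_{X(p,q)}(Th)$ contains a Dirac delta distribution supported on a hypersurface of $X(p,q)$, which prevents it from being locally integrable. I will work in a neighborhood of a point of $\partial X(p,q)_{\apm}^{\mathrm{reg}}$, using the coordinates $(\xi_1,\xi_2,x',y',\omega'',\eta'')$ from Section \ref{subsec:bdry}, where the boundary of $X(p,q)_{\apm}$ is $\{\xi_1=0\}$ and the Laplacian takes the explicit form \eqref{eqn:Lapxi}.

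First, combining Lemma \ref{lem:psi+-} with Kummer's relation (Lemma \ref{lem:Kummer}), I would write, on $X(p,q)_{\apm}$,
$$Th = c \cdot h(z',z'')\,(\cosh t)^{\lambda'-\rho'}(\sinh t)^{\lambda''-\rho''}\bigl[A\, u_{1(0)}(t) + B\, u_{2(0)}(t)\bigr]$$
with $A=a(\lambda',\tfrac12,\lambda)$ and $B = b(\lambda',\tfrac12,\lambda)\,e^{i\pi/2}$. The hypothesis $(\lambda',\tfrac12)\not\in \Lambda_{\apm}(\lambda)$ translates to $\lambda'-\lambda-\tfrac32\not\in 2{\mathbb N}$, which is precisely the condition that the Gamma factor $\Gamma(\frac{-\lambda'+\lambda+3/2}{2})$ in the denominator of \eqref{eqn:1914110} be finite; the other denominator factor $\Gamma(\frac{\lambda'+\lambda+3/2}{2})$ is always finite since its argument is positive. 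Hence $B\ne 0$.

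Next I will extract the leading asymptotics of $Th$ as $\xi_1\downarrow 0$ with the other variables fixed. From \eqref{eqn:coo1}--\eqref{eqn:coo2} one has $\sinh t=(\xi_1\xi_2)^{1/2}$ and $e^{-s}=(\xi_1/\xi_2)^{1/2}$, while \eqref{eqn:hasym} gives $h\sim a(z',\omega'',\eta'')(\xi_1/\xi_2)^{(\lambda''+\rho'')/2}$. Combining these with $u_{1(0)}(0)=1$ and $u_{2(0)}(t)\sim C_0(\sinh t)^{-2\lambda''}$ (from \eqref{eqn:g0}), the exponents collapse to give
$$Th \sim c\,a(z',\omega'',\eta'')\bigl[\alpha\,\xi_1^{1/2}\xi_2^{-\rho''}+\beta\,\xi_2^{-1/2-\rho''}\bigr]\qquad (\xi_1>0),$$
while $Th\equiv 0$ for $\xi_1<0$, with $\beta$ a nonzero constant multiple of $B$. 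The crucial feature of $\lambda''=\tfrac12$ is that in the $u_{2(0)}$ summand the exponents of $\xi_1$ from $h$ and from $(\sinh t)^{\lambda''-\rho''}u_{2(0)}$ cancel exactly, leaving a nonzero \emph{boundary trace} $\Psi_0:=c\,\beta\, a(z',\omega'',\eta'')\,\xi_2^{-1/2-\rho''}$ at $\xi_1=0$.

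Finally I will apply \eqref{eqn:Lapxi} termwise to the Heaviside-function piece $\mathbf{1}_{\xi_1>0}\cdot\Psi_0$. The distributional identities $\xi_1^2\delta'(\xi_1)=0$ and $\xi_1\delta(\xi_1)=0$ annihilate the contributions of $\xi_1^2\partial_1^2$ and $\xi_1 P\partial_1$, the term $Q(\mathbf{1}_{\xi_1>0}\Psi_0)=\mathbf{1}_{\xi_1>0}\,Q\Psi_0$ is locally integrable, but
$$4\partial_1\partial_2\bigl(\mathbf{1}_{\xi_1>0}\Psi_0\bigr)=4\,\delta(\xi_1)\,\partial_2\Psi_0.$$
Here $\partial_2\Psi_0=-(\tfrac12+\rho'')c\beta\, a(z',\omega'',\eta'')\,\xi_2^{-3/2-\rho''}$ is not identically zero, since $\tfrac12+\rho''>0$ (using $p''\ge 1$ and $q''\ge 1$) and since $h\ne 0$ forces the analytic leading coefficient $a(z',\omega'',\eta'')$ to be not identically zero. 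The remaining summands of $Th$ (the $\xi_1^{1/2}\xi_2^{-\rho''}$ term and the $O(se^{-2s})$ corrections from \eqref{eqn:hasym} together with the $O(t^2)$ corrections in $u_{j(0)}$) vanish sufficiently fast at $\xi_1=0$ that $\Delta_{X(p,q)}$ sends them into $L^1_{\mathrm{loc}}$ by a direct exponent count. Therefore $\Delta_{X(p,q)}(Th)$ contains the nonzero boundary delta $4\delta(\xi_1)\partial_2\Psi_0$, and cannot be locally integrable.

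The main anticipated obstacle is the bookkeeping in this last step: one must verify that none of the subleading asymptotic corrections, after being hit by the Laplacian, can produce a canceling boundary delta. The structure of \eqref{eqn:Lapxi}, in which every $\partial_1$ outside the $4\partial_1\partial_2$ term carries at least one power of $\xi_1$, makes this plausible, but the argument must rule out delicate algebraic cancellations at the boundary.
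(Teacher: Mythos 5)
Your proposal is correct and follows essentially the same route as the paper's proof: localize at a point of $\partial X(p,q)_{\apm}^{\operatorname{reg}}$ in the coordinates \eqref{eqn:coordxi}, use Kummer's relation \eqref{eqn:1914110} to see that the second-solution coefficient is nonzero off $\Lambda_{\apm}(\lambda)$, observe that $\lambda''=\tfrac 12$ makes the leading term of $Th$ a jump (Heaviside) across $\xi_1=0$ with trace proportional to $\xi_2^{-\frac12-\rho''}a$, and extract the non-integrable $\delta(\xi_1)$ from the cross term $4\frac{\partial^2}{\partial\xi_1\partial\xi_2}$ in \eqref{eqn:Lapxi}. The bookkeeping you flag for the subleading terms is precisely what the paper packages into Lemma \ref{lem:191496} (compare $f$ with the pure Heaviside trace $\widetilde f$ and check that the difference, as well as the remaining terms of the operator applied to $f$, stay locally integrable), so your anticipated obstacle is resolved by the same comparison you sketch.
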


\begin{proof}
We consider a neighbourhood $U$ 
 at a point of $\partial X(p,q)_{\apm}^{\operatorname{reg}}$, 
 and use the coordinates \eqref{eqn:coordxi}
  as in Section \ref{subsec:bdry}.  
Then $T h=0$
 if $\xi_1<0$.  
Let us examine the behavior of $T h$
 in $U \cap \overline{X(p,q)_{\apm}}$
 near the boundary as $\xi_1 \downarrow 0$.

Let $\psi_{\apm}$ be as in \eqref{eqn:psi+-}.  
Since $(\lambda',\lambda'') \not \in \Lambda_{\apm}(\lambda)$, 
 the coefficient $b(\lambda',\lambda'',\lambda)$ 
 in \eqref{eqn:gKummer} does not vanish.  
Hence there exist $A \in {\mathbb{C}}$
 and $B \ne 0$
 such that 
\begin{align*}
   \psi_{\apm}(z',z'',t)
   =& h(z',z'') (A u_{1(0)}(t) + B u_{2(0)}(t))
\\
  =& h(z',z'') (A -B t^{-1})(1+O(t^2)).  
\end{align*}

We recall from \eqref{eqn:hasym}
 that $h(z',z'')$ has an asymptotic behavior 
\[
 h(z',z'')=a(z',\omega'',\eta'') e^{-(\lambda''+\rho'')s}(1+se^{-2s}O(1))
\]
 for some real analytic function
 of $(z',\omega'',\eta'') \in X(p',q') \times S^{p''-1} \times S^{q''-1}$
 as $s \to \infty$
 in the coordinates $z''=(\omega'' \sinh s, \eta'' \cosh s)$.

Combining these two asymptotic behaviours
 as $s \to \infty$ and $t \to 0$
 with $\xi_2 =e^s \sinh t$
 away from 0 and infinity, 
 we obtain the asymptotic behavior 
 of $T h$
 near the boundary $\partial X(p,q)_{\apm}^{\operatorname{reg}}$:
\[
  T h \sim \sum_{k=0}^{\infty}\xi_1^{\lambda''-\frac 1 2+\frac k 2} g_k(\xi_2, z', \omega'',\eta'')
\]
 where the first term is given by 
\[
  g_0=-B \xi_2^{-\frac 1 2 - \rho''}a(z',\omega'',\eta'').  
\]
In view of $\lambda''=\frac 1 2$, 
 the proof of the lemma is reduced to the following.  
\end{proof}

\begin{lemma}
\label{lem:191496}
Let $U$ be an open subset of ${\mathbb{R}}^n$, 
 and $P$ a differential operator on $U$ of the form
\[
  P=\xi_1^2 \frac{\partial^2}{\partial \xi_1^2} + \frac{\partial}{\partial \xi_1} P'+ P''
\]
such that $P'$ and $P''$ are differential operators
 of variables $\xi'=(\xi_2, \cdots, \xi_n)$
 with smooth coefficients
 in $\xi=(\xi_1, \xi')$.  
Suppose that $f(\xi)$ is a locally integrable function on $U$
 of the form
\[
  f(\xi)=
\begin{cases}
F(\xi_1^{\frac 1 2}, \xi')
\quad
&\text{for $\xi_1>0$,}
\\
0
\quad
&\text{for $\xi_1\le 0$, }
\end{cases}
\]
for some smooth function $F$.  
Then the distribution $P(\xi_1 f)$ is a continuous function in $U$.  
Furthermore, 
 $f$ is a weak solution to $P f=0$
 only when $P(\xi_1 f)|_{\xi_1=0} \equiv 0$.  
\end{lemma}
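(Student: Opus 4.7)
The plan is to directly compute both $P(\xi_1 f)$ and $Pf$ as distributions on $U$ and to compare their boundary contributions at $\xi_1=0$. Write $g := \xi_1 f$, which is a continuous function on $U$ vanishing on $\{\xi_1\le 0\}$ and equal to $\xi_1 F(\sqrt{\xi_1},\xi')$ on $\{\xi_1>0\}$. Expressing $P'=\sum_\alpha a_\alpha(\xi)\,\partial_{\xi'}^\alpha$, set
\[
   \tilde F(\eta,\xi_1,\xi') := \sum_\alpha a_\alpha(\xi_1,\xi')\,(\partial_{\xi'}^\alpha F)(\eta,\xi'),
\]
so that $P'g = \xi_1 H(\xi_1)\,\tilde F(\sqrt{\xi_1},\xi_1,\xi')$ and $P'f = H(\xi_1)\,\tilde F(\sqrt{\xi_1},\xi_1,\xi')$.

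For the first assertion, I would show term-by-term that $P(g)$ is a locally bounded function (i.e.\ contains no $\delta$-type components). Since $g$ is continuous and vanishes at $\xi_1=0$, $\partial_{\xi_1}g$ is already a locally bounded function, and the second derivative $\partial_{\xi_1}^2 g$ has at most a single $\delta(\xi_1)$ coming from the jump of $\partial_{\xi_1}g$ across $\xi_1=0$; this $\delta$ is annihilated by the factor $\xi_1^2$, so $\xi_1^2\partial_{\xi_1}^2 g$ is a continuous function that vanishes at $\xi_1=0$. Next, $P'g=\xi_1 H\tilde F$ is continuous and vanishes at $\xi_1=0$, so $\partial_{\xi_1}(P'g)$ is a locally bounded function whose one-sided value at $\xi_1=0^+$ equals $\tilde F(0,0,\xi')$. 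Finally, $P''g$ is a smooth multiple of $\xi_1 H$, hence continuous with value $0$ on the boundary. Summing, $P(\xi_1 f)$ is a function on $U$ whose one-sided trace on $\{\xi_1=0\}$ equals $\tilde F(0,0,\xi')$ from the positive side and $0$ from the negative side.

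For the necessity, I would compute $Pf$ directly as a distribution. Although $f=HF$ is discontinuous at $\xi_1=0$ and $\partial_{\xi_1}^2 f$ involves both a $\delta'(\xi_1)$ with coefficient $F(0,\xi')$ and a non-integrable $\xi_1^{-3/2}$ term, the factor $\xi_1^2$ kills the $\delta'$ and regularizes $\xi_1^{-3/2}$ to $\xi_1^{1/2}$. On the other hand $P'f=H\tilde F$ has a jump of $\tilde F(0,0,\xi')$ at $\xi_1=0$, so $\partial_{\xi_1}(P'f)$ contributes the unavoidable singular term $\tilde F(0,0,\xi')\,\delta(\xi_1)$. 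Gathering all the remaining pieces one obtains
\[
   Pf \;=\; \tilde F(0,0,\xi')\,\delta(\xi_1) \;+\; H(\xi_1)\,(PF)(\sqrt{\xi_1},\xi_1,\xi')
\]
as a distribution on $U$. If $f$ is a weak solution of $Pf=0$, the $\delta$-coefficient must vanish, which by the first part is precisely the condition $P(\xi_1 f)\big|_{\xi_1=0}\equiv 0$.

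The main technical obstacle is the careful bookkeeping of half-integer powers of $\xi_1$ in the distributional derivatives of $HF(\sqrt{\xi_1},\xi')$: the first $\xi_1$-derivative already contains the locally integrable but unbounded $\xi_1^{-1/2}$, and the second contains the non-integrable $\xi_1^{-3/2}$, whose Hadamard regularization would otherwise introduce spurious boundary terms. The device of first multiplying by $\xi_1$ (to form $g=\xi_1 f$) or by $\xi_1^2$ (the leading factor of $P$) is what regularizes these singularities and isolates the genuine obstruction, namely the coefficient of $\delta(\xi_1)$ in $Pf$, as the one-sided boundary value of the tame distribution $P(\xi_1 f)$.
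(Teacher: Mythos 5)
Your argument is correct and is essentially the paper's own proof: both isolate the single $\delta(\xi_1)$-layer that $\frac{\partial}{\partial \xi_1}P'$ produces from the jump of $f$ across $\{\xi_1=0\}$, check that all remaining contributions ($\xi_1^2\partial_{\xi_1}^2 f$, $P''f$, and the tame part of $\partial_{\xi_1}(P'f)$) are locally integrable thanks to the factors $\xi_1^2$ and $\xi_1$, and conclude that $Pf=0$ in the distribution sense forces the one-sided trace $P(\xi_1 f)|_{\xi_1=0}$ to vanish. The only (cosmetic) difference is organizational: the paper extracts the delta layer by comparing $f$ with the auxiliary function $\widetilde{f}(\xi)=F(0,\xi')$ for $\xi_1>0$ (and $0$ otherwise), via the splitting $Pf=(P-\partial_{\xi_1}P')f+\partial_{\xi_1}P'(f-\widetilde{f})+\partial_{\xi_1}P'\widetilde{f}$, whereas you apply the jump formula to $P'f$ directly.
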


\begin{proof}
The first assertion is clear.  
Moreover we have 
$
   P(\xi_1 f)|_{\xi_1=0}= P_1 F(0,\xi').  
$

For the second assertion,
 we observe 
 that $f$ is a smooth function
 on $U^{\operatorname{reg}}:= U \setminus \{\xi_1 \ne 0\}$.  
Hence, 
in order to show $P f \ne 0$
 in the distribution sense, 
 it suffices to show
 that $P f$ does not belong to $L_{\operatorname{loc}}^1(U)$
 when $P_1 F(0,\xi') \not \equiv 0$.  
We introduce a locally integrable function $\widetilde f$ on $U$
 by 
\[
  \widetilde f(\xi):=
\begin{cases}
F(0, \xi')
\quad
&\text{for $\xi_1>0$,}
\\
0
\quad
&\text{for $\xi_1\le 0$.}
\end{cases}
\]
Clearly, 
 the distribution
\[
  \frac{\partial}{\partial \xi_1} P_1 \widetilde f
  =
  \delta(\xi_1) P_1 F(0,\xi')
\]
 is not locally integrable
 unless $P_1 F (0,\xi')\not \equiv 0$. 
Since $(P-\frac{\partial}{\partial \xi_1} P_1)f \in  L_{\operatorname{loc}}^1(U)$
 and $\frac{\partial}{\partial \xi_1} P_1 (f-\widetilde f)\in L_{\operatorname{loc}}^1(U)$, 
 we conclude 
 that $P f \not \in L_{\operatorname{loc}}^1(U)$.  
Thus the lemma is proved.  
\end{proof}

\section{Further analysis of the branching laws}
\label{sec:comments}

In this section we discuss further analytic aspects
 of the branching laws of the restriction $\Pi|_{G'}$
 of a discrete series representation $\Pi \in \operatorname{Disc}(G/H)$
 ($\subset \widehat G$), 
 see Section \ref{subsec:Pidisc} for notation.

\subsection{Generalities: discrete part of unitary representations}
\label{subsec:Pidisc}
Any unitary representation $\pi$
 of a reductive Lie group $L$
 has a unique irreducible decomposition:
\begin{equation}
\label{eqn:directint}
   \pi \simeq \int_{\widehat L} n_{\pi}(\sigma) \sigma \, d \mu (\sigma)
\qquad
\text{(direct integral)}, 
\end{equation}
where $d \mu$ is a Borel measure 
 on the unitary dual $\widehat L$, 
 and $n_{\pi} \colon \widehat L \to {\mathbb{N}} \cup \{\infty\}$
 is a measurable function
 ({\it{multiplicity}}).

In what follows, 
 we use the same letter 
 to denote a representation space 
 with the representation.  
Then the Hilbert direct sum 
\[
  \pi_{\operatorname{disc}} := \Hsum{\sigma \in \widehat L} \Hom_L(\sigma, \pi) \otimes \sigma
\]
is identified with the maximal closed $G$-submodule of $\pi$
 which is discretely decomposable.  
We say that the unitary representation $\pi_{\operatorname{disc}}$
 is the {\it{discrete part}}
 of the unitary representation $\pi$,  
and its orthogonal complement $\pi_{\operatorname{cont}}$ in $\pi$
 is the {\it{continuous part}} of $\pi$.

The unitary representation $\pi$ is discretely decomposable
 if $\pi = \pi_{\operatorname{disc}}$, 
 whereas $\pi=\pi_{\operatorname{cont}}$
({\it{i.e.}}, 
$\pi_{\operatorname{disc}}=\{0\}$)
 means
 that the irreducible decomposition \eqref{eqn:directint} does not 
 contain any discrete spectrum.

The irreducible decomposition \eqref{eqn:directint} is called
 the {\it{Plancherel formula}}
 when $\pi$ is the regular representation on $L^2(X)$
 where $X$ is an $L$-space
 with invariant measure;
 it is called the 
 {\it{branching law}}
 when $\pi$ is the restriction $\Pi|_L$
 of a unitary representation $\Pi$ of a group $G$
 containing $L$ 
 as a subgroup.  
The support
$
   \{\sigma \in \widehat L: \Hom_L(\sigma, \pi) \ne \{0\}\}
$
 will be denoted by
\begin{alignat*}{3}
&\operatorname{Disc}(X)\,\,
&&(\subset \widehat G)
\quad
&&\text{when $L=G$
 and $\pi$ is the regular representation $L^2(X)$;}
\\
&\operatorname{Disc}(\Pi|_{G'})\,\,
&&(\subset \widehat {G'})
\quad
&&\text{when $L=G'$ and $\pi$ is the restriction  $\Pi|_{G'}$.   }
\end{alignat*}

We consider the restriction $\Pi \in \operatorname{Disc}(G/H)$
 $(\subset \widehat G)$
 to the subgroup $G'$.  
The unitary representation $\Pi|_{G'}$ of the subgroup $G'$ splits
 into the discrete and continuous parts:
\[
  \Pi|_{G'} = (\Pi|_{G'})_{\operatorname{disc}} \oplus (\Pi|_{G'})_{\operatorname{cont}}.  
\]
We ask 
\begin{question}
\label{q:Disc}
Let $H$, $G'$ be reductive subgroups of $G$
 and $\Pi \in \Disc{G/H}$.  
\begin{enumerate}
\item[{\rm{(1)}}]
When $(\Pi|_{G'})_{\operatorname{disc}} =\{0\}$?
\item[{\rm{(2)}}]
When $\#(\operatorname{Disc}(\Pi|_{G'}))<\infty$?
\item[{\rm{(3)}}]
When $(\Pi|_{G'})_{\operatorname{cont}} =\{0\}$?
\end{enumerate}
\end{question}

We note that if $G'=H$
 and if $\Pi \in \operatorname{Disc}(G/H)$
 then the underlying $({\mathfrak{g}}, K)$-module $\Pi_K$
 is never discretely decomposable
 as a $({\mathfrak{g}}', K')$-module, 
 see \cite[Thm.~6.2]{xkInvent98}.  

\subsection{Criteria
 for $(\Pi|_{G'})_{\operatorname{disc}}=\{0\}$
 and $(\Pi|_{G'})_{\operatorname{cont}}=\{0\}$
}

We retain the previous setting 
where
\[\text{$G/H=O(p,q)/O(p-1,q) =X(p,q)$
 and 
$
  G'=O(p',q') \times O(p'',q'').  
$}
\]
{}From now, 
 we assume
\begin{equation}
\label{eqn:pqbasic}
   p=p'+p'' \ge 2, \, \, q=q'+q'' \ge 1, \,\, 
   \text{$(p',q') \ne (0,0)$ and $(p'',q'') \ne (0,0)$.}
\end{equation}
Then Proposition \ref{prop:discX} and Theorem \ref{thm:2002} 
 may be restated as:
\begin{align*}
\operatorname{Disc}(G/H)
=&
\{\pi_{+,\lambda}^{p,q}:
\lambda \in A_+(p,q)\}, 
\\
\operatorname{Disc}(\pi_{+,\lambda}^{p,q}|_{G'})
=&
\bigcup_{\delta, \varepsilon}
\{\pi_{\delta,\lambda'}^{p',q'} \boxtimes \pi_{\varepsilon,\lambda''}^{p'',q''}:
(\lambda', \lambda'') \in \Lambda_{\delta\varepsilon}(\lambda)\}. 
\end{align*}

In particular 
 $\operatorname{Disc}(G/H) \ne \emptyset$.

Here are answers to Question \ref{q:Disc} (1)--(3):
\begin{theorem}
[purely continuous spectrum]
\label{thm:conti}
The following two conditions on $(p',p'',q',q'')$ are equivalent:
\begin{enumerate}
\item[{\rm{(i)}}]
$\operatorname{Disc}(\Pi|_{G'}) = \emptyset$
 for any $\Pi \in \operatorname{Disc}(G/H)$;
\item[{\rm{(ii)}}]
$(p',p'')=(1,1)$,
 $(p',q')=(1,1)$ or $(p'',q'')=(1,1)$.  
\end{enumerate}
\end{theorem}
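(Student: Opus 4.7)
The plan is to apply Theorem \ref{thm:2002} to translate (i) into a purely combinatorial condition on the parameter sets $\Lambda_{\delta\varepsilon}(\lambda)$, and then perform a finite case analysis on the quadruple $(p',q',p'',q'')$. By Proposition \ref{prop:discX}, every $\Pi \in \operatorname{Disc}(G/H)$ has the form $\pi_{+,\lambda}^{p,q}$ for some $\lambda \in A_+(p,q)$, and Theorem \ref{thm:2002} identifies $\operatorname{Disc}(\Pi|_{G'}) = \emptyset$ with the simultaneous vanishing of $\Lambda_{-+}(\lambda)$, $\Lambda_{++}(\lambda)$, $\Lambda_{+-}(\lambda)$. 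Hence (i) is equivalent to this simultaneous vanishing for every $\lambda \in A_+(p,q)$. The key combinatorial input from \eqref{eqn:A+}--\eqref{eqn:A-} is the following dichotomy: $A_+(p,q) = \emptyset$ exactly when $p=0$, or $p=1$ and $q\ge 1$; $A_+(1,0) = \{\pm\tfrac12\}$; and in every other case $A_+(p,q)$ is an infinite subset of $\mathbb{Z}+\tfrac{p+q}{2}$ of the form $\{\lambda_0,\lambda_0+1,\lambda_0+2,\dots\}$. The same dichotomy applies to $A_-(p,q)=A_+(q,p)$.

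For the direction (ii) $\Rightarrow$ (i), each subcase of (ii) forces the relevant $A$-sets entering the definitions of $\Lambda_{\delta\varepsilon}(\lambda)$ to collapse. If $(p',q')=(1,1)$ or $(p'',q'')=(1,1)$, then $A_+=A_-=A_+(1,1)=\emptyset$ on that side, killing the corresponding factor in each $\Lambda_{\delta\varepsilon}(\lambda)$. If $(p',p'')=(1,1)$ and both $q',q'' \ge 1$, then similarly $A_{\pm}(1,q')=A_{\pm}(1,q'')=\emptyset$. The only delicate subcase is $q'=0$, $q''\ge 1$ (and its mirror), which leaves $A_+(p',q')=A_+(1,0)=\{\pm\tfrac12\}$ as the unique nonempty $A$-set on the primed side; only $\Lambda_{+-}(\lambda)$ could then be nonempty, but its defining condition $\lambda'-\lambda''-\lambda-1 \in 2\mathbb{N}$ demands $\lambda' \ge \lambda+\lambda''+1$, contradicting $\lambda' \le \tfrac12$ together with $\lambda,\lambda''>0$.

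For the reverse direction (i) $\Rightarrow$ (ii) I argue contrapositively: assuming that none of the three equalities in (ii) holds, I exhibit $(\lambda,\lambda',\lambda'')$ witnessing some $\Lambda_{\delta\varepsilon}(\lambda)\neq\emptyset$. The main split is on $\min(p',p'')$. If $p',p''\ge 2$, then $A_+(p',q')$ and $A_+(p'',q'')$ are both infinite; fixing $\lambda'\in A_+(p',q')$ and $\lambda''\in A_+(p'',q'')$, one takes $\lambda\in A_+(p,q)$ large enough so that the integer $\lambda-\lambda'-\lambda''$ is positive and odd (the parity can be flipped since $A_+(p,q)$ contains consecutive shifts of step $1$), giving $\Lambda_{++}(\lambda)\neq\emptyset$. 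Up to the primed/double-primed symmetry, the remaining configurations have $p' \in \{0,1\}$ with $p''\ge 2$; the exclusion of (ii) rules out $(p',q')=(1,1)$, so in every such subcase $A_-(p',q')=A_+(q',p')$ is nonempty (either $\{\pm\tfrac12\}$ or infinite) while $A_+(p'',q'')$ is infinite. Fixing $\lambda \in A_+(p,q)$ and $\lambda'\in A_-(p',q')$, one chooses $\lambda''$ in $A_+(p'',q'')$ sufficiently large and of the correct parity so that $\lambda''-\lambda-\lambda'-1\in 2\mathbb{N}$; the integrality of $\lambda''-\lambda-\lambda'$ is automatic from $p=p'+p''$, $q=q'+q''$, and the parity can again be arranged by the step-$1$ structure of $A_+(p'',q'')$.

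The principal obstacle will be the careful bookkeeping of small-parameter subcases, especially those involving the finite set $A_+(1,0)=\{\pm\tfrac12\}$, where emptiness of $\Lambda_{\delta\varepsilon}(\lambda)$ has to be verified through the inequality constraints rather than through the emptiness of the ambient $A$-set. The direction (i) $\Rightarrow$ (ii) is the more delicate half, since one must enumerate enough subcases to ensure that outside of (ii) some parity-and-positivity configuration can always be realized simultaneously.
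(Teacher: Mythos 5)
Your skeleton is the same as the paper's: by Proposition \ref{prop:discX} and Theorem \ref{thm:2002}, condition (i) becomes the simultaneous emptiness of $\Lambda_{-+}(\lambda)$, $\Lambda_{++}(\lambda)$, $\Lambda_{+-}(\lambda)$ for all $\lambda\in A_+(p,q)$, and the rest is combinatorics of the sets \eqref{eqn:A+}--\eqref{eqn:A-}. Your (ii) $\Rightarrow$ (i) half is sound; in fact your handling of the subcase $(p',p'')=(1,1)$, $q'=0$, $q''\ge 2$ (where $A_+(1,0)=\{\pm\tfrac{1}{2}\}$ survives and $\Lambda_{+-}(\lambda)$ must be killed by the positivity constraint $\lambda'\ge\lambda+\lambda''+1$ rather than by emptiness of the ambient $A$-sets) is finer than the paper's Lemma \ref{lem:Aempty}, whose purely set-theoretic formulation is delicate exactly there, since $A_+(1,0)\times A_-(1,q'')\ne\emptyset$ in that subcase. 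One small misstatement: for $(p',p'')=(1,1)$ with $q',q''\ge 1$ you assert $A_{\pm}(1,q')=A_{\pm}(1,q'')=\emptyset$, but $A_-(1,q')=A_+(q',1)\ne\emptyset$ when $q'\ge 2$; your conclusion still holds because in each of the three $\Lambda_{\delta\varepsilon}$ the factor that actually enters ($A_+(1,q')$, $A_+(1,q'')$, or $A_+(1,q')$ again) is genuinely empty.

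The genuine gap is in the contrapositive direction, in the configuration $p'\in\{0,1\}$, $p''\ge 2$ with $(p',q')=(1,0)$, which is permitted by \eqref{eqn:pqbasic} and by the failure of (ii). You claim that excluding $(p',q')=(1,1)$ forces $A_-(p',q')=A_+(q',p')$ to be nonempty, ``either $\{\pm\tfrac{1}{2}\}$ or infinite''; but $A_-(1,0)=A_+(0,1)=\emptyset$ by \eqref{eqn:A+} (you seem to have evaluated $A_+(1,0)$ instead of $A_+(0,1)$, which contradicts your own remark in the other direction that $A_+(1,0)$ is the only nonempty $A$-set attached to the factor $(1,0)$). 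Hence the witness you build in $\Lambda_{-+}(\lambda)$ does not exist and this subcase is uncovered. The repair is to produce the witness in $\Lambda_{++}(\lambda)$: take $\lambda'\in A_+(1,0)=\{\pm\tfrac{1}{2}\}$, $\lambda''$ minimal in the infinite set $A_+(p'',q'')$, and $\lambda\in A_+(p,q)$ large; the parity required for $\lambda-\lambda'-\lambda''-1\in 2\mathbb{N}$ can be arranged by switching the sign of $\lambda'$ or by the step-one structure of $A_+(p,q)$. This is precisely the configuration the paper disposes of by quoting Example \ref{ex:GP}~(2); its route to (i) $\Rightarrow$ (ii) first invokes Theorem \ref{thm:191419} and Lemma \ref{lem:191411} to force $p'p''>0$ and $\min(p',p'')\le 1$, and then eliminates $(p',q')=(1,0)$ by that example, whereas you argue directly on the $A$-sets --- a legitimate alternative once this subcase is fixed.
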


As a weaker property than Theorem \ref{thm:conti}, 
 we have:
\begin{theorem}
[at most finitely many discrete summands]
\label{thm:191419}
The following three conditions on $(p',p'',q',q'')$ are equivalent:
\begin{enumerate}
\item[{\rm{(i)}}]
$\# \operatorname{Disc}(\Pi|_{G'}) < \infty$
 for any $\Pi \in \operatorname{Disc}(G/H)$;
\item[{\rm{(ii)}}]
$\# \operatorname{Disc}(\Pi|_{G'}) < \infty$
 for some $\Pi \in \operatorname{Disc}(G/H)$;
\item[{\rm{(iii)}}]
$p' p''> 0$, 
 $\operatorname{min}(p'',q') \le 1$
 and $\operatorname{min}(p',q'') \le 1$.  
\end{enumerate}
\end{theorem}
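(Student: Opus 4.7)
The plan is to combine Theorem~\ref{thm:2002} with a short combinatorial analysis of the parameter sets $\Lambda_{\delta\varepsilon}(\lambda)$. Since $\operatorname{Disc}(G/H) \ne \emptyset$, (i)\,$\Rightarrow$\,(ii) is immediate, so the content lies in (ii)\,$\Rightarrow$\,(iii) and (iii)\,$\Rightarrow$\,(i). By Theorem~\ref{thm:2002}, for every $\lambda \in A_+(p,q)$ there is a bijection
\[
   \operatorname{Disc}(\Pi|_{G'}) \;\longleftrightarrow\; \Lambda_\amp(\lambda) \,\sqcup\, \Lambda_\app(\lambda) \,\sqcup\, \Lambda_\apm(\lambda),
\]
and the paper already notes $\Lambda_\app(\lambda)$ is finite (positivity forces $\lambda',\lambda''<\lambda$). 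Hence the whole question reduces to when $\Lambda_\amp(\lambda)$ and $\Lambda_\apm(\lambda)$ are finite.

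The combinatorial heart of the argument is the following claim, which I would prove directly from the definitions: for any $\lambda \in A_+(p,q)$,
\[
   \Lambda_\apm(\lambda) \text{ is infinite} \;\iff\; A_+(p',q') \text{ is infinite and } A_-(p'',q'') \ne \emptyset,
\]
with the symmetric statement for $\Lambda_\amp(\lambda)$. For $(\Leftarrow)$, fix any $\lambda'' \in A_-(p'',q'')$; then $\lambda' := \lambda + \lambda'' + 1 + 2n$ automatically lies in the congruence class $\mathbb{Z} + \tfrac{p'+q'}{2}$ (because $\lambda \in \mathbb{Z}+\tfrac{p+q}{2}$ and $\lambda'' \in \mathbb{Z}+\tfrac{p''+q''}{2}$), and is positive for all large $n$, so it belongs to $A_+(p',q')$ for infinitely many $n$. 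For $(\Rightarrow)$, if $A_-(p'',q'') = \emptyset$ the product is empty; if $A_+(p',q')$ is finite, inspection of \eqref{eqn:A+} shows it is either empty or $\{\pm\tfrac12\}$ (the latter only when $(p',q') = (1,0)$), and the constraint $\lambda' \ge \lambda + \lambda'' + 1 > 1$ is then incompatible with $\lambda' \le \tfrac12$, forcing $\Lambda_\apm(\lambda) = \emptyset$.

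Translating through \eqref{eqn:A+}--\eqref{eqn:A-}: $A_+(p,q)$ is infinite iff $p \ge 2$, and $A_-(p,q) = A_+(q,p)$ is nonempty iff $q \ge 2$ or $(q,p) = (1,0)$. Under the ambient hypothesis \eqref{eqn:pqbasic}, a short case check finishes the proof. If $p' = 0$ then necessarily $p'' = p \ge 2$ and $q' \ge 1$, so $A_-(p',q') = A_+(q',p')$ is nonempty while $A_+(p'',q'')$ is infinite; by the lemma $\Lambda_\amp(\lambda)$ is infinite, hence $\operatorname{Disc}(\Pi|_{G'})$ is infinite; the case $p'' = 0$ is symmetric. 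If $p'p'' > 0$, the lemma gives
\[
  \Lambda_\apm(\lambda) \text{ infinite} \iff \min(p',q'') \ge 2,
  \qquad
  \Lambda_\amp(\lambda) \text{ infinite} \iff \min(p'',q') \ge 2.
\]
Therefore $\operatorname{Disc}(\Pi|_{G'})$ is finite (for every, equivalently some, $\lambda \in A_+(p,q)$) exactly when $p'p''>0$, $\min(p',q'') \le 1$ and $\min(p'',q') \le 1$, which is condition (iii); this establishes both (ii)\,$\Rightarrow$\,(iii) and (iii)\,$\Rightarrow$\,(i).

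There is no serious analytic difficulty here once Theorem~\ref{thm:2002} is in hand; the main obstacle is simply the careful bookkeeping of fractional parts and of the sporadic finite set $A_+(1,0) = \{\pm\tfrac12\}$. A priori such a singleton factor could inflate some $\Lambda_{\delta\varepsilon}(\lambda)$ to an infinite set, and ruling this out reduces each time to the positivity bound $\lambda' \ge \lambda + \lambda'' + 1 > 1$ (or its mirror) — the one step where one must pause and verify by hand.
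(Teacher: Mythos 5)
Your proposal is correct and takes essentially the same route as the paper: reduce via Theorem \ref{thm:2002} to the combinatorics of the parameter sets, note that $\Lambda_{\app}(\lambda)$ is always finite, and characterize directly from the definitions when $\Lambda_{\apm}(\lambda)$ and $\Lambda_{\amp}(\lambda)$ are infinite (independently of $\lambda$), which is exactly the content of the paper's Lemmas \ref{lem:191414} and \ref{lem:191417}. Your reformulation of the criterion in terms of $A_+(p',q')$ being infinite and $A_-(p'',q'')$ being nonempty coincides, under the standing hypothesis \eqref{eqn:pqbasic}, with the paper's condition \lq\lq{$p''=0$ or $\min(p',q'')\ge 2$}\rq\rq\ (and its mirror), so the case check you perform is the same bookkeeping the paper leaves as \lq\lq{direct from the definition}\rq\rq.
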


As an opposite extremal case to Theorem \ref{thm:conti}, 
 we have:
\begin{theorem}
[discretely decomposable restriction]
\label{thm:discdeco}
The following three conditions on $(p',p'',q',q'')$ are  equivalent:
\begin{enumerate}
\item[{\rm{(i)}}]
The restriction $\Pi|_{G'}$ is discretely decomposable
 for any $\Pi \in \operatorname{Disc}(G/H)$;
\item[{\rm{(ii)}}]
The restriction $\Pi|_{G'}$ is discretely decomposable
 for some $\Pi \in \operatorname{Disc}(G/H)$;
\item[{\rm{(iii)}}]
$p'=0$ or $p''=0$.  
\end{enumerate}
\end{theorem}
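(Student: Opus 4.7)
The implication (i) $\Rightarrow$ (ii) is trivial, and (iii) $\Rightarrow$ (i) is already recorded at the beginning of Section \ref{sec:5}: when $p'=0$ or $p''=0$, the cited \cite[Thm.~3.3]{xk:1} identifies $\Pi|_{G'}$ with the Hilbert direct sum \eqref{eqn:2.1.1}, which is discretely decomposable by construction. The substantive content is therefore the contrapositive of (ii) $\Rightarrow$ (iii): assuming $p'p''>0$, I claim that no $\Pi=\pi_{+,\lambda}^{p,q}\in\operatorname{Disc}(G/H)$ has discretely decomposable restriction to $G'$.

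My plan is to invoke the admissibility criterion of Kobayashi \cite{xkInvent98}, which asserts that, for a $K$-admissible irreducible unitary representation $\Pi$ of $G$ and a reductive subgroup $G'$ with $K'=K\cap G'$, the restriction $\Pi|_{G'}$ is discretely decomposable if and only if $\Pi$ is $K'$-admissible, i.e.\ every $K'$-isotypic component of $\Pi$ is finite-dimensional. The hypothesis of the criterion is satisfied because the $K$-type formula $\Xi(K,b)$ in Definition-Theorem \ref{def:pilmd}(iii) gives each $K$-type of $\pi_{+,\lambda}^{p,q}$ with multiplicity one. It therefore suffices to produce a single $K'$-type that occurs in $\Pi|_{K'}$ with infinite multiplicity whenever $p'p''>0$.

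The required infinite multiplicity follows from a direct count obtained by combining \eqref{eqn:Xib} with the classical branching of $\mathcal{H}^m(\mathbb{R}^p)|_{O(p')\times O(p'')}$ (and the analogous branching on the $q$-side): the multiplicity in $\Pi|_{K'}$ of the $K'$-type $\mathcal{H}^{a'}(\mathbb{R}^{p'})\boxtimes\mathcal{H}^{a''}(\mathbb{R}^{p''})\boxtimes\mathcal{H}^{c'}(\mathbb{R}^{q'})\boxtimes\mathcal{H}^{c''}(\mathbb{R}^{q''})$ equals the cardinality of an explicit subset of $\mathbb{N}^2$, namely those $(m,n)$ satisfying $m\ge a'+a''$, $n\ge c'+c''$, $m-a'-a''\in 2\mathbb{N}$, $n-c'-c''\in 2\mathbb{N}$, and the congruence $m-n-b\in 2\mathbb{N}$ dictated by \eqref{eqn:Xib}. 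For any choice of $(a',a'',c',c'')$ meeting the (solvable) single parity condition $a'+a''-c'-c''\equiv b\pmod 2$, the index $(m,n)$ runs freely along two infinite step-$2$ arithmetic progressions, subject only to the mild inequality $m-n\ge b$; the resulting count is infinite. The boundary cases $\min(p',p'')=1$ or $q'q''=0$ are handled with only cosmetic modifications, using the corresponding Gegenbauer-type branching of Remark \ref{rem:2.2}(1), or the observation that when the $q$-side indices collapse, $m$ alone still ranges over an infinite progression of a fixed parity.

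The main obstacle is the reliance on Kobayashi's nonobvious implication ``discrete decomposability $\Rightarrow$ $K'$-admissibility'' for $K$-admissible $\Pi$, which is external to the present paper. A self-contained alternative in the spirit of Section \ref{sec:5} would proceed geometrically: using the decomposition \eqref{eqn:2.2.1}, fix one of the open pieces $X(p,q)_{\delta\varepsilon}$ that is nonempty under $p'p''>0$, diagonalize the $t$-direction via the Jacobi equation \eqref{eqn:JacobiODE}, and exploit the continuous spectrum of the $G'$-Plancherel decomposition of $L^2(X(p',q')_{\delta})\widehat\otimes L^2(X(p'',q'')_{\varepsilon})$ to exhibit $L^2$-eigenfunctions of $\Delta_{X(p,q)}$ that lie outside the closed $G'$-span of the images of all holographic operators $T_{\delta\varepsilon,\lambda}^{\lambda',\lambda''}$ from Theorem \ref{thm:holographic}.
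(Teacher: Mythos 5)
Your framing is right — the content is (ii) $\Rightarrow$ (iii) — and two pieces of your argument are sound: the treatment of (iii) $\Rightarrow$ (i) via \cite{xk:1}, and the $K'$-type count showing that $\pi_{+,\lambda}^{p,q}$ is \emph{not} $K'$-admissible when $p'p''>0$ (this is a legitimate, essentially self-contained substitute for the paper's appeal to the classification result quoted as Lemma \ref{lem:K'adm}). The gap is the bridge you use between the two notions of discrete decomposability. The ``criterion'' you attribute to \cite{xkInvent98} is not what that reference proves: \cite[Thm.~4.2]{xkInvent98}, quoted here as Proposition \ref{prop:discdeco} (i) $\Leftrightarrow$ (ii), identifies $K'$-admissibility with discrete decomposability of the underlying $({\mathfrak{g}},K)$-module $\Pi_K$ as a $({\mathfrak{g}}',K')$-module --- an algebraic condition. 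The implication you actually need, namely that discrete decomposability of the \emph{unitary} restriction $\Pi|_{G'}$ forces $K'$-admissibility (equivalently, by Proposition \ref{prop:discdeco} (iii), forces some closed irreducible $G'$-subspace to meet $\Pi_K$), is precisely the delicate direction; only the converse (algebraic $\Rightarrow$ unitary) is cited in the paper, right before Lemma \ref{lem:K'adm}. That closed irreducible $G'$-submodules can fail to meet $\Pi^{\infty}\supset\Pi_K$ is exactly the phenomenon recorded in Corollary \ref{cor:discdeco}, so this step cannot be waved through, and indeed if the equivalence you invoke were available, the paper's Theorem \ref{thm:discdeco} would follow at once from Lemma \ref{lem:K'adm} with no further work.

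The paper closes this gap with Proposition \ref{prop:Discadm}: by the exhaustion theorem (Theorem \ref{thm:2002}, refined as Theorem \ref{thm:4.2}), the discrete part $(\Pi|_{G'})_{\operatorname{disc}}$ is an explicitly known sum of representations $\pi_{\delta,\lambda'}^{p',q'}\boxtimes\pi_{\varepsilon,\lambda''}^{p'',q''}$, and Lemma \ref{lem:Xiadm} (applied to each factor of $G'$) shows this sum is $K'$-admissible; hence if $\Pi|_{G'}$ were purely discrete, $\Pi$ itself would be $K'$-admissible, contradicting the non-admissibility when $p'p''>0$. So your multiplicity count does finish the proof once you insert this step --- but note that it relies on the full strength of the exhaustion of the discrete spectrum, not merely on the construction of the holographic operators. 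Your fallback sketch (exhibiting $L^2$-eigenfunctions of $\Delta_{X(p,q)}$ attached to continuous spectrum, lying outside the closed span of the images of the operators $T_{\delta\varepsilon,\lambda}^{\lambda',\lambda''}$ of Theorem \ref{thm:holographic}) is a plausible idea but is not carried out: as stated it amounts to constructing a nonzero continuous part of the branching law, which is more than the paper does anywhere, and without the exhaustion statement you would also have to rule out discrete summands embedded in ways other than through the $T_{\delta\varepsilon,\lambda}^{\lambda',\lambda''}$.
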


For a unitary representation $\Pi$ of $G$, 
 the space $\Pi^{\infty}$ of smooth vectors
 (as a representation of $G$)
 is smaller in general than the space
 $(\Pi|_{G'})^{\infty}$ of smooth vectors
 as a representation of the subgroup $G'$.  
This difference detects discrete decomposability of the restriction $\Pi|_{G'}$  as follows.  
\begin{corollary}
\label{cor:discdeco}
Let $\Pi \in \Disc{G/H}$.  
Then the following two conditions are equivalent:
\begin{enumerate}
\item[{\rm{(i)}}]
The restriction $\Pi|_{G'}$ contains continuous spectrum
 in the branching law;
\item[{\rm{(ii)}}]
There does not exist a closed $G'$-irreducible submodule $W$ in $\Pi$
 such that $W \cap \Pi^{\infty} \ne \{0\}$.  
\end{enumerate}
\end{corollary}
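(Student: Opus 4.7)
Via Theorem \ref{thm:discdeco}, condition (i) is equivalent to $p'p'' > 0$, so the claim reduces to proving that a closed $G'$-irreducible submodule $W \subset \Pi$ with $W \cap \Pi^\infty \ne \{0\}$ exists if and only if $p' = 0$ or $p'' = 0$. I plan to treat the two directions separately.

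For the direction in which $p'p'' = 0$: Theorem \ref{thm:discdeco} gives discrete decomposability of $\Pi|_{G'}$, which by \cite{xk:1} is in fact $K'$-admissibility; hence the Harish-Chandra module $\Pi_K$ decomposes algebraically as a direct sum of irreducible $(\mathfrak{g}', K')$-submodules $V_j$. Since $\Pi_K \subset \Pi^\infty$, the Hilbert-space closure $W_j := \overline{V_j}$ is a closed $G'$-irreducible submodule satisfying $W_j \cap \Pi^\infty \supset V_j \ne \{0\}$, yielding the required $W$.

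The main content is the converse. Assuming $p'p'' > 0$, I aim to show that every closed $G'$-irreducible submodule $W \subset \Pi$ satisfies $W \cap \Pi^\infty = \{0\}$, and I argue by contradiction. Suppose $0 \ne f \in W \cap \Pi^\infty$. By Proposition \ref{prop:3.1.1} together with Theorem \ref{thm:4.2}, the inclusion $W \hookrightarrow \Pi$ coincides (up to scalar) with a holographic operator $T_{\delta\varepsilon,\lambda}^{\lambda',\lambda''}$ for some $(\delta,\varepsilon) \in \{(-,+),(+,+),(+,-)\}$ and $(\lambda',\lambda'') \in \Lambda_{\delta\varepsilon}(\lambda)$, so $f$ is supported in $\overline{X(p,q)_{\delta\varepsilon}}$ and vanishes on the complementary open set. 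Because $G$ acts transitively on $X(p,q) \simeq G/H$, every $G$-smooth vector of the $L^2$-realization is a $C^\infty$ function, so $f$ would be a genuine $C^\infty$ function supported in this closed subset. To rule this out I treat the case $(\delta,\varepsilon)=(+,-)$ (the other two being entirely analogous) and analyze $f$ near a regular boundary point of $\partial X(p,q)_{\apm}^{\operatorname{reg}}$ using the coordinates \eqref{eqn:coordxi} of Lemma \ref{lem:1914194}. The computation carried out within the proof of Proposition \ref{prop:weakSol} gives the leading asymptotic
\[
f(\xi_1,\xi_2,z',\omega'',\eta'') \;\sim\; a(z',\omega'',\eta'')\,\xi_2^{-\rho''}\,\xi_1^{\lambda''} \qquad \text{as } \xi_1 \downarrow 0,
\]
where $a$ is the analytic boundary coefficient of $h := (T_{\apm,\lambda}^{\lambda',\lambda''})^{-1} f$ provided by the asymptotic formula \eqref{eqn:L2asym}. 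Since $p'' q'' > 0$ forces $\lambda'' \in A_-(p'',q'')$ to be a positive integer or positive half-integer, the extension by zero of $\xi_1^{\lambda''}$ across $\xi_1 = 0$ is not $C^\infty$, so $f$ cannot be smooth at any boundary point where $a \ne 0$, giving the contradiction.

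The main obstacle is to verify that the leading boundary coefficient $a$ does not vanish identically. I plan to extract this from the uniqueness content of \eqref{eqn:L2asym}---a $K$-finite eigenfunction in the discrete series $L^2(X(q'',p''))_{\lambda''}$ is recovered from its leading boundary coefficient---together with the tensor-product structure of $h \in \pi_{+,\lambda'}^{p',q'} \boxtimes \pi_{-,\lambda''}^{p'',q''}$. Replacing $f$ by its projection to a single nontrivial $K'$-isotypic component of $W$ (still lying in $W \cap \Pi^\infty$) makes $h$ $K'$-finite, hence $K$-finite in the second tensor factor, so $a$ is a nonzero real-analytic function on $X(p',q') \times S^{p''-1} \times S^{q''-1}$; then $\{a \ne 0\}$ is open and dense, supplying the required boundary point at which the non-smoothness contradicts $f \in \Pi^\infty$.
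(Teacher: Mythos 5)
Your proposal is correct and takes essentially the same route as the paper: reduce (i) to $p'p''\ne 0$ via Theorem \ref{thm:discdeco}, handle $p'p''=0$ by discrete decomposability (Proposition \ref{prop:discdeco}, giving a closed $G'$-irreducible $W$ meeting $\Pi_K\subset\Pi^{\infty}$), and for $p'p''>0$ identify any $G'$-embedding with a holographic operator $T_{\delta\varepsilon,\lambda}^{\lambda',\lambda''}$ (Theorems \ref{thm:2002}, \ref{thm:holographic}, \ref{thm:4.2}) whose image meets $C^{\infty}(X(p,q))\supset\Pi^{\infty}$ only in $\{0\}$. The only difference is one of detail: where the paper dismisses this last point with ``by the definition of $T_{\delta\varepsilon,\lambda}^{\lambda',\lambda''}$,'' you spell out the boundary asymptotics $a\,\xi_2^{-\rho''}\xi_1^{\lambda''}$ near $\partial X(p,q)_{\delta\varepsilon}^{\operatorname{reg}}$ (as in Proposition \ref{prop:weakSol} and Lemma \ref{lem:Heviside}), together with the nonvanishing of the leading coefficient for $K'$-finite vectors, which is exactly the implicit justification.
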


\subsection{Proof of Theorem \ref{thm:191419}: finitely many summands}
We begin with the proof of Theorem \ref{thm:191419}.  

\begin{lemma}
\label{lem:191414}
In the setting \eqref{eqn:pqbasic}, 
 the following three conditions on $(p',p'',q',q'')$
 and $\lambda \in A_+(p,q)$ are equivalent:
\begin{enumerate}
\item[{\rm{(i)}}]
$\Lambda_{\apm} (\lambda) \ne \emptyset$;
\item[{\rm{(ii)}}]
$\# \Lambda_{\apm} (\lambda) = \infty$;
\item[{\rm{(iii)}}]
$p'' =0$ or \lq\lq{$p' \ge 2$ and $q'' \ge 2$}\rq\rq.  
\end{enumerate}
\end{lemma}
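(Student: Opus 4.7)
My plan is a case analysis based on when the factor sets $A_+(p',q')$ and $A_-(p'',q'')=A_+(q'',p'')$ are non-empty, combined with a parity-and-positivity analysis of the arithmetic condition $\lambda'-\lambda''-\lambda-1\in 2\mathbb{N}$ that defines $\Lambda_{\apm}(\lambda)$. Since (ii)$\Rightarrow$(i) is automatic, it suffices to establish (iii)$\Rightarrow$(ii) and (i)$\Rightarrow$(iii); together these also recover the dichotomy (asserted right after Theorem \ref{thm:2002}) that $\Lambda_{\apm}(\lambda)$ is empty whenever it is not infinite.

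For (iii)$\Rightarrow$(ii) I would first read off from \eqref{eqn:A+}, \eqref{eqn:A-} that, in either of the two alternatives in (iii)---namely $p''=0$ (which under \eqref{eqn:pqbasic} forces $p'\ge 2$ and $1\le q''\le q$), or $p'\ge 2$ together with $q''\ge 2$---both $A_+(p',q')$ and $A_-(p'',q'')$ are non-empty, and $A_+(p',q')$ in fact contains every sufficiently large element of the coset $\mathbb{Z}+\tfrac{p'+q'}{2}$. A short coset computation gives $\lambda+\lambda''+1\in\mathbb{Z}+\tfrac{p'+q'}{2}$ for any $\lambda''\in A_-(p'',q'')$, using $\lambda\in\mathbb{Z}+\tfrac{p+q}{2}$ and $\lambda''\in\mathbb{Z}+\tfrac{p''+q''}{2}$. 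Fixing one such $\lambda''$, the arithmetic progression $\{\lambda+\lambda''+1+2n:n\in\mathbb{N}\}$ therefore lies eventually inside $A_+(p',q')$, producing infinitely many pairs $(\lambda',\lambda'')\in\Lambda_{\apm}(\lambda)$.

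For (i)$\Rightarrow$(iii) I argue contrapositively: assume (iii) fails, so $p''\ge 1$ and either $p'\le 1$ or $q''\le 1$. If $p'=0$, or if $p'=1$ with $q'\ge 1$, then $A_+(p',q')=\emptyset$ by \eqref{eqn:A+}, so $\Lambda_{\apm}(\lambda)=\emptyset$. If $p'=1$ and $q'=0$, then $p''\ge 1$ and $q''\ge 1$, and $A_+(p',q')=\{\pm\tfrac12\}$; here the only chance for $A_-(p'',q'')$ to be non-empty is $q''\ge 2$, but then every $\lambda''\in A_-(p'',q'')$ is strictly positive, so together with $\lambda>0$ and $\lambda'\le\tfrac12$ one finds $\lambda'-\lambda''-\lambda-1<0$, which cannot lie in $2\mathbb{N}$. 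Finally, if $q''\le 1$ and $p''\ge 1$, then $A_-(p'',q'')=A_+(q'',p'')$ is empty by direct inspection of \eqref{eqn:A+}. In every sub-case $\Lambda_{\apm}(\lambda)=\emptyset$.

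The main delicacy is the degenerate case $p'=1$, $q'=0$: here $A_+(p',q')=\{\pm\tfrac12\}$ is non-empty, but only a two-element set, so mere non-emptiness of both factor sets is not enough to conclude. The sign/positivity constraint $\lambda>0$ (coupled with $\lambda''>0$ in the complementary factor) is what ultimately rules this configuration out; the other sub-cases reduce to straightforward bookkeeping of the definitions \eqref{eqn:A+}, \eqref{eqn:A-}.
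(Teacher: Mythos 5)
Your proof is correct, and it is essentially the verification the paper has in mind: the paper's own proof of this lemma consists of the single remark that it is ``direct from the definition of $\Lambda_{\apm}(\lambda)$,'' and your case analysis (coset computation giving infinitely many $\lambda'$ when $p'\ge 2$ and one factor set is non-empty, plus emptiness of $A_+(p',q')$ or $A_-(p'',q'')$, with the positivity argument handling the borderline case $(p',q')=(1,0)$) is exactly the elementary bookkeeping being left to the reader. No gaps; the degenerate sub-case you flag is treated correctly.
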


\begin{proof}
Direct from the definition of
 $\Lambda_{\apm} (\lambda)$ in Section \ref{sec:Intro}.  
\end{proof}

We note that the conditions (i) and (ii) in Lemma \ref{lem:191414}
 do not depend on the choice of $\lambda \in A_+(p,q)$.  
An analogous result holds for $\Lambda_{\amp}(\lambda)$
 by switching the role of $(p',q')$ and $(p'',q'')$.  
Hence we have:

\begin{lemma}
\label{lem:191417}
The following three conditions on $(p',p'', q',q'')$
 and $\lambda \in A_+(p,q)$ are equivalent:
\begin{enumerate}
\item[{\rm{(i)}}]
$\Lambda_{\amp} (\lambda) \cup \Lambda_{\apm} (\lambda) \ne \emptyset$; 
\item[{\rm{(ii)}}]
$\# (\Lambda_{\amp} (\lambda) \cup \Lambda_{\apm} (\lambda))= \infty$;
\item[{\rm{(iii)}}]
$p' p'' =0$, $\operatorname{min}(p'',q')\ge 2$, 
 or $\operatorname{min}(p',q'')\ge 2$.  
\end{enumerate}
\end{lemma}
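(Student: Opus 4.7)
The plan is to deduce Lemma \ref{lem:191417} from Lemma \ref{lem:191414} by exploiting a symmetry between $\Lambda_{\amp}(\lambda)$ and $\Lambda_{\apm}(\lambda)$ under interchange of the two factors. First I would record the following: using the identity $A_-(p,q)=A_+(q,p)$ and inspecting the defining conditions, the map $(\lambda',\lambda'')\mapsto(\lambda'',\lambda')$ furnishes a bijection between $\Lambda_{\amp}(\lambda)$ attached to the quadruple $(p',p'',q',q'')$ and $\Lambda_{\apm}(\lambda)$ attached to the swapped quadruple $(p'',p',q'',q')$; indeed, the parity condition $\lambda''-\lambda-\lambda'-1\in 2\mathbb{N}$ transforms into $\mu'-\mu''-\lambda-1\in 2\mathbb{N}$ upon setting $(\mu',\mu'')=(\lambda'',\lambda')$. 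Applying Lemma \ref{lem:191414} to the swapped quadruple then yields the companion statement: the three conditions $\Lambda_{\amp}(\lambda)\ne\emptyset$, $\#\Lambda_{\amp}(\lambda)=\infty$, and \lq\lq$p'=0$, or $p''\ge 2$ and $q'\ge 2$\rq\rq\ are mutually equivalent.

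With this companion in hand, the implication (i)\,$\Leftrightarrow$\,(ii) is immediate: Lemma \ref{lem:191414} and its companion together show that each of $\Lambda_{\apm}(\lambda)$ and $\Lambda_{\amp}(\lambda)$ is individually either empty or of infinite cardinality, so their union is empty precisely when both are empty and infinite otherwise.

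For (i)\,$\Leftrightarrow$\,(iii), I would combine the two characterizations of nonemptyness. The union is nonempty exactly when one of the following four conditions holds: $p''=0$; $p'=0$; $p'\ge 2$ and $q''\ge 2$; $p''\ge 2$ and $q'\ge 2$. The first two collapse to $p'p''=0$, the third is $\operatorname{min}(p',q'')\ge 2$, and the fourth is $\operatorname{min}(p'',q')\ge 2$, giving exactly (iii).

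No substantive analytic difficulty is expected; the only points requiring care are verifying that the swap $(p',p'',q',q'')\leftrightarrow(p'',p',q'',q')$ preserves the standing hypothesis \eqref{eqn:pqbasic} (which is symmetric in the two factors) and that the bijection above respects the degenerate boundary cases in the definitions \eqref{eqn:A+}--\eqref{eqn:A-} of $A_\pm(p,q)$, both of which are routine checks.
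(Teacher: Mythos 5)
Your proposal is correct and is essentially the paper's own argument: the paper likewise obtains the $\Lambda_{\amp}$ analogue of Lemma \ref{lem:191414} by switching the roles of $(p',q')$ and $(p'',q'')$ and then combines the two statements (each set being empty or infinite, and the union of the two nonemptiness criteria giving condition (iii)). You merely spell out the swap-bijection and the boolean combination more explicitly than the paper does, which is fine.
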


Since $\# \Lambda_{\app} (\lambda)< \infty$ for any $\lambda$, 
 Theorem \ref{thm:191419} follows immediately from Lemma \ref{lem:191417}.  

\subsection{Nonexistence condition of discrete spectrum: proof of Theorem \ref{thm:conti}}

In this section,
 we discuss about when the restriction $\Pi|_{G'}$ decomposes into 
 continuous spectrum, 
 and give a proof of Theorem \ref{thm:conti}.

We begin with the following observation
 on elementary combinatorics:
\begin{lemma}
\label{lem:Aempty}
The condition (ii) in Theorem \ref{thm:conti} is equivalent to the condition:
\[
  A_{\delta}(p',q') \times A_{\varepsilon}(p'',q'') =\emptyset
\quad
\text{for $(\delta, \varepsilon)=(-,+)$, $(+,+)$ and $(+,-)$}.
\]
\end{lemma}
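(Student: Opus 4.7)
The claim is purely combinatorial, and the plan is to unwind the definitions \eqref{eqn:A+}--\eqref{eqn:A-} and carry out a small truth-table argument.

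First, from \eqref{eqn:A+} one reads off that, under the hypothesis $(p',q')\ne (0,0)$ of \eqref{eqn:pqbasic}, the set $A_+(p',q')$ is empty precisely when $p'=0$ or ($p'=1$ and $q'\ge 1$); and by the identity $A_-(p',q')=A_+(q',p')$, the symmetric criterion for $A_-(p',q')=\emptyset$ holds. Analogous descriptions apply to $A_{\pm}(p'',q'')$. Setting
\[
\alpha_{\pm}:=\bigl[A_{\pm}(p',q')=\emptyset\bigr],
\qquad
\beta_{\pm}:=\bigl[A_{\pm}(p'',q'')=\emptyset\bigr],
\]
each of these four Boolean variables is then an explicit numerical condition on the parameters.

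Second, since $X\times Y=\emptyset$ iff $X=\emptyset$ or $Y=\emptyset$, the simultaneous vanishing of the three products in the lemma becomes the conjunction
\[
(\alpha_-\vee\beta_+)\wedge(\alpha_+\vee\beta_+)\wedge(\alpha_+\vee\beta_-),
\]
which by Boolean distributivity collapses to
\[
(\alpha_+\wedge\alpha_-)\vee(\alpha_+\wedge\beta_+)\vee(\beta_+\wedge\beta_-).
\]

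Third, I would translate each disjunct back into a condition on $(p',q',p'',q'')$ using the table produced in the first step. Under $(p',q')\ne (0,0)$, the conjunction $\alpha_+\wedge\alpha_-$ has the unique solution $(p',q')=(1,1)$; symmetrically $\beta_+\wedge\beta_-$ forces $(p'',q'')=(1,1)$; and for $\alpha_+\wedge\beta_+$, the inequality $p'+p''\ge 2$ together with the fact that both $\alpha_+$ and $\beta_+$ require the corresponding $p$-coordinate to be at most $1$ leaves only $p'=p''=1$ (the accompanying inequalities $q',q''\ge 1$ being absorbed by the other two disjuncts via $q'+q''\ge 1$). Matching these three cases with the three alternatives in condition~(ii) of Theorem~\ref{thm:conti} completes the equivalence.

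The only point of care is the anomalous clause $(p,q)=(1,0)$ in \eqref{eqn:A+}, for which $A_+$ is the nonempty finite set $\{\pm\tfrac12\}$ rather than $\emptyset$; this exception must be respected carefully when tabulating the truth values of $\alpha_{\pm},\beta_{\pm}$ at the boundary. Beyond that, the argument is pure Boolean bookkeeping, with no real analytic obstacle.
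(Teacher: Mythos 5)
Your setup is sound: the translation of the emptiness of $A_{\pm}(p',q')$ and $A_{\pm}(p'',q'')$ out of \eqref{eqn:A+}--\eqref{eqn:A-}, and the Boolean reduction of the three product conditions to $(\alpha_+\wedge\alpha_-)\vee(\alpha_+\wedge\beta_+)\vee(\beta_+\wedge\beta_-)$, are both correct, and under \eqref{eqn:pqbasic} these three disjuncts amount to $(p',q')=(1,1)$, to \lq\lq$p'=p''=1$, $q'\ge 1$ and $q''\ge 1$\rq\rq, and to $(p'',q'')=(1,1)$, respectively. (The paper's own proof is just \lq\lq clear from the definitions,\rq\rq\ so the only issue is whether your elaboration closes.)

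The gap is in your final matching step: the parenthetical claim that the constraints $q'\ge 1$, $q''\ge 1$ accompanying $p'=p''=1$ are \lq\lq absorbed by the other two disjuncts via $q'+q''\ge 1$\rq\rq\ is false. Take $(p',q')=(1,0)$ and $(p'',q'')=(1,q'')$ with $q''\ge 2$; this quadruple is admissible under \eqref{eqn:pqbasic} and satisfies condition (ii) of Theorem \ref{thm:conti} through $(p',p'')=(1,1)$, yet none of your three disjuncts holds, since $(p',q')\ne(1,1)$, $(p'',q'')\ne(1,1)$ and $q'=0$. Concretely, $A_+(p',q')\times A_-(p'',q'')=\{\pm\tfrac12\}\times A_+(q'',1)\ne\emptyset$, precisely because of the anomalous fourth case $A_+(1,0)=\{\pm\tfrac12\}$ of \eqref{eqn:A+} which you flagged as the \lq\lq only point of care\rq\rq\ but did not actually carry through. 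So at this corner (and its mirror $q''=0$, $q'\ge 2$) condition (ii) does not force the three products to be empty, and your concluding step fails; it cannot be repaired by bookkeeping alone, because the two sides genuinely differ there. What keeps the implication (ii) $\Rightarrow$ (i) of Theorem \ref{thm:conti} intact in these corners is not the emptiness of the parameter products but the positivity constraint built into $\Lambda_{+-}(\lambda)$ (resp.\ $\Lambda_{-+}(\lambda)$): with $\lambda'\in\{\pm\tfrac12\}$, $\lambda''\ge\tfrac12$ and $\lambda>0$ one has $\lambda'-\lambda''-\lambda-1<0$, so that set is empty for a different reason. Your argument would need either to exclude these quadruples from the equivalence or to supplement the absorption claim with this separate analysis.
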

\begin{proof}
Clear from the definitions \eqref{eqn:A+} and \eqref{eqn:A-}
 of $A_{\pm}(p,q)$.  
\end{proof}

Thus the implication (ii) $\Rightarrow$ (i) in Theorem \ref{thm:conti}
follows readily from Theorem \ref{thm:2002} 
and Lemma \ref{lem:Aempty}.  

In order to prove the opposite implication,
 we need another elementary combinatorics as below.  
The proof is direct from the definition of $\Lambda_{\app}(\lambda)$.  
\begin{lemma}
\label{lem:191411}
In the setting \eqref{eqn:pqbasic}, 
 assume further that 
 $p', p'' \ge 2$.  
Then for $\lambda \in A_+(p,q)$, 
 we have the following:
\begin{enumerate}
\item[{\rm{(1)}}]
$\Lambda_{\app} (\lambda)=\emptyset$
\quad
if $\lambda < 2$ or if \lq\lq{$\lambda =2$ and $p' \equiv q' \mod 2$}\rq\rq;
\item[{\rm{(2)}}]
$\Lambda_{\app}(\lambda) \ne \emptyset$
\quad
if $\lambda>2$
 or if \lq\lq{$\lambda=2$ and $p' \not \equiv q' \mod 2$}\rq\rq.  
\end{enumerate}
\end{lemma}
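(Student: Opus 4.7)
The approach is purely combinatorial: the definition
\[
\Lambda_{\app}(\lambda) = \{(\lambda',\lambda'') \in A_+(p',q') \times A_+(p'',q'') : \lambda - \lambda' - \lambda'' - 1 \in 2{\mathbb{N}}\}
\]
reduces to locating an element in the image of the sum map $(\lambda',\lambda'') \mapsto \lambda'+\lambda''$ subject to a parity and positivity constraint. Under the hypothesis $p',p'' \ge 2$ (and, in the typical case, $q',q'' \ge 1$), each factor $A_+(\cdot,\cdot)$ is an arithmetic progression with common difference $1$ starting at its minimum $s'_{\min}$, $s''_{\min}$; hence the sum map has image $\{s_0, s_0+1, s_0+2, \dots\}$ with $s_0 := s'_{\min} + s''_{\min}$.

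The second step is to rewrite the defining condition in terms of $s := \lambda' + \lambda''$: I want $\lambda - s$ to be a positive odd integer. Since $\lambda$ and $s$ both lie in ${\mathbb{Z}} + \tfrac{p+q}{2}$, the difference $\lambda - s$ is automatically an integer, and its parity flips whenever $s$ increments by one. Consequently such an $s$ exists in $\{s_0, s_0+1, \dots\}$ if and only if $\lambda - s_0 \ge 1$, i.e., if and only if $\lambda \ge s_0 + 1$: take $s = \lambda - 1$ when $\lambda - s_0 \ge 1$, and observe that no choice works when $\lambda \le s_0$.

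The final step is a short parity case check for $s_0$. Assuming $q', q'' \ge 1$, one has $s'_{\min} = 1$ when $p'+q'$ is even and $s'_{\min} = \tfrac{1}{2}$ when $p'+q'$ is odd, and similarly for $s''_{\min}$; this gives $s_0 \in \{2,\tfrac{3}{2},1\}$. Recalling that $\lambda \in {\mathbb{Z}} + \tfrac{p+q}{2}$ forces $p'+q' \equiv p''+q'' \pmod 2$ whenever $\lambda$ is an integer, the three cases are: both parities even yields $s_0 = 2$ and threshold $\lambda \ge 3$; both odd yields $s_0 = 1$ and threshold $\lambda \ge 2$; mixed parities yield $s_0 = \tfrac{3}{2}$ and threshold $\lambda \ge \tfrac{5}{2}$ (with $\lambda$ half-integral). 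Reading these against the lemma: $\lambda < 2$ fails every possible threshold, and $\lambda = 2$ with $p'\equiv q' \pmod 2$ falls into the "both even" case whose threshold is $3$; conversely $\lambda > 2$ clears the threshold in every case, and $\lambda = 2$ with $p'\not\equiv q'$ falls into the "both odd" case whose threshold is exactly $2$.

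I do not expect a serious obstacle here — once the arithmetic progression description of the sums is in place, the rest is a finite combinatorial verification. The only points requiring mild care are (i) tracking the half-integer case so the parity constraint $\lambda - \lambda' - \lambda'' - 1 \in 2{\mathbb{N}}$ is correctly matched, and (ii) separately inspecting the degenerate situations where $q' = 0$ or $q'' = 0$, which adjust $s'_{\min}$ to $\tfrac{p'}{2}-1$ but are handled by exactly the same argument after recomputing $s_0$.
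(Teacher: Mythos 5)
Your main computation is correct and is, in substance, exactly the ``direct from the definition'' verification that the paper leaves unwritten: for $q',q''\ge 1$ each factor $A_+(\cdot,\cdot)$ is an arithmetic progression of step $1$ starting at $1$ or $\tfrac12$ according to the parity of $p'+q'$ (resp.\ $p''+q''$), the sums $\lambda'+\lambda''$ fill out $s_0+{\mathbb{N}}$, the membership condition asks that $\lambda-(\lambda'+\lambda'')$ be a positive odd integer, hence $\Lambda_{++}(\lambda)\ne\emptyset$ if and only if $\lambda\ge s_0+1$, and your three-way parity check ($s_0=2,\,1,\,\tfrac32$) reproduces the dichotomy of the lemma. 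Up to this point I see no problem.

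The gap is the final sentence: the degenerate cases $q'=0$ or $q''=0$ are \emph{not} ``handled by exactly the same argument after recomputing $s_0$.'' By \eqref{eqn:A+}, for $q'=0$ one has $s'_{\min}=\tfrac{p'}{2}-1$, which agrees with the generic value only for $p'\in\{3,4\}$, and carrying out your own argument with the recomputed $s_0$ then \emph{contradicts} the statement instead of proving it. Concretely: for $(p',q',p'',q'')=(2,0,2,1)$ one has $0\in A_+(2,0)$ and $\tfrac12\in A_+(2,1)$, and with $\lambda=\tfrac32\in A_+(4,1)$ the pair $(\lambda',\lambda'')=(0,\tfrac12)$ gives $\lambda-\lambda'-\lambda''-1=0\in2{\mathbb{N}}$, so $\Lambda_{++}(\tfrac32)\ne\emptyset$ although $\lambda<2$, against part (1); for $(p',q',p'',q'')=(6,0,2,2)$ the minimal value of $\lambda'+\lambda''$ is $2+1=3$, so $\Lambda_{++}(3)=\emptyset$ although $\lambda=3>2$ and $p'\equiv q'\bmod 2$, against part (2). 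So the clean threshold ``$2$ versus $3$'' stated in the lemma is really a statement for $q',q''\ge1$; in the cases $q'q''=0$ your method yields the different (and parameter-dependent) threshold $\lambda\ge\lambda'_{\min}+\lambda''_{\min}+1$, and you must either impose $q',q''\ge1$ explicitly or record these corrected thresholds as exceptions rather than asserting they come out the same. (This does not affect the way the lemma is used in the proof of Theorem \ref{thm:conti}: there one only needs $\Lambda_{++}(\lambda)\ne\emptyset$ for some sufficiently large $\lambda\in A_+(p,q)$ whenever $p',p''\ge2$, which your threshold computation does give in every case, including $q'q''=0$.)
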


We are ready to complete the proof
 of Theorem \ref{thm:conti}.  
\begin{proof}
[Proof of the implication (i) $\Rightarrow$ (ii) in Theorem \ref{thm:conti}]
Suppose that 
$
  \operatorname{Disc}(\Pi|_{G'}) =\emptyset
$
for any $\Pi \in \operatorname{Disc}(G/H)$.  
Then Theorem \ref{thm:191419} tells 
\begin{equation}
\label{eqn:thm72}
  p' p'' >0, \,\,
 \operatorname{min}(p'',q')\le 1, \,\,
 \text{and $\operatorname{min}(p',q'')\le 1$}.  
\end{equation}
On the other hand, 
 it follows from Lemma \ref{lem:191411} (2)
 that $\Lambda_{\app}(\lambda) \ne \emptyset$
 for $\lambda >2$ 
 if $\operatorname{min}(p',p'')\ge 2$.  
Hence we get $\min(p',p'') \le 1$.  
Without loss of generality, 
 we may and do assume $p'=1$.  
In turn, 
the condition \eqref{eqn:thm72} imply
\[
  (p', p'') = (1,1), \,
  (p',q') =(1,0), \,
  \text{ or }
  (p',q') =(1,1).  
\]
As we saw in Example \ref{ex:GP}, 
 $\operatorname{Disc}(\pi_{+,\lambda}^{p,q}|_{G'}) \ne \emptyset$
 for any $\lambda \in A_+(p,q)$ with $\lambda \ge 1$
 if $(p', q')=(1,0)$.  
Hence $(p', q')\ne(1,0)$.  
Thus the implication (i) $\Rightarrow$ (ii)
 in Theorem \ref{thm:conti} is proved.  
\end{proof}

\subsection{Proof of Theorem \ref{thm:discdeco}
 and Corollary \ref{cor:discdeco}}
\label{subsec:algdeco}

In the category of \gk-modules,
 analogous results to Theorem \ref{thm:discdeco}
 and Corollary \ref{cor:discdeco} are known in a general setting,  
 which we now recall:
\begin{proposition}
\label{prop:discdeco}
Let $(G,G')$ be a reductive symmetric pair.  
For $\Pi \in \widehat G$
 of which the underlying \gk-module $\Pi_K$ 
 is a Zuckerman derived functor module
 $A_{\mathfrak{q}}(\lambda)$.  
Then the following four conditions are equivalent:
\begin{enumerate}
\item[{\rm{(i)}}]
$\Pi_K$ is discretely decomposable as a $({\mathfrak{g}}',K')$-module
 (\cite[Def.~1.1]{xkInvent98}).  
\item[{\rm{(ii)}}]
$\Pi_K$ is $K'$-admissible,
 namely,
 $\dim_{\mathbb{C}} \Hom_{K'}(\tau, \Pi_K)<\infty$
 for any $\tau \in \widehat{K'}$.  
\item[{\rm{(iii)}}]
There exists a $G'$-irreducible closed subspace $\pi$ of $\Pi$
 such that $\pi \cap \Pi_K \ne \{0\}$.  
\item[{\rm{(iv)}}]
There exists a $G'$-irreducible closed subspace $\pi$ of $\Pi$
 such that $\pi \cap \Pi_K$ is dense in the Hilbert space $\pi$.  
\end{enumerate}
\end{proposition}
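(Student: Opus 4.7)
The plan is to establish the cycle (ii) $\Rightarrow$ (i) $\Rightarrow$ (iv) $\Rightarrow$ (iii) $\Rightarrow$ (ii), invoking the main theorem of \cite{xkInvent98} for the deep algebraic equivalence and using Harish-Chandra's dictionary between irreducible unitary $G'$-representations and irreducible $(\mathfrak{g}', K')$-modules to connect the algebraic conditions (i), (ii) to the analytic conditions (iii), (iv).

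I would dispatch (ii) $\Rightarrow$ (i) as a general fact valid for any $(\mathfrak{g}, K)$-module: $K'$-admissibility makes the $K'$-isotypic filtration into a Noetherian $(\mathfrak{g}', K')$-stable filtration, which forces an algebraic direct sum decomposition into irreducible $(\mathfrak{g}', K')$-modules. The nontrivial converse (i) $\Rightarrow$ (ii), specialized to $A_{\mathfrak{q}}(\lambda)$ and to reductive symmetric pairs $(G, G')$, is the central theorem of \cite{xkInvent98}: its proof proceeds via the associated variety criterion $\mathcal{V}_{\mathfrak{g}'}(\Pi_K) \subset \mathcal{N}^*_{G'}$, exploiting the fact that the associated variety of $A_{\mathfrak{q}}(\lambda)$ is the closure of a single $K_\mathbb{C}$-orbit determined by $\mathfrak{u}^\perp$.

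For (i) $\Rightarrow$ (iv), I would take an algebraic decomposition $\Pi_K = \bigoplus_\alpha W_\alpha$ into irreducible $K'$-admissible $(\mathfrak{g}', K')$-modules furnished by (ii). Each $W_\alpha$ inherits a positive-definite Hermitian form from $\Pi$, so by Harish-Chandra's theorem each underlies a unique $\pi_\alpha \in \widehat{G'}$, and the Hilbert closure of $W_\alpha$ inside $\Pi$ realizes $\pi_\alpha$ as a closed $G'$-irreducible subspace $\pi \subset \Pi$ satisfying $\pi \cap \Pi_K \supset W_\alpha$ with $W_\alpha$ dense in $\pi$. The implication (iv) $\Rightarrow$ (iii) is immediate.

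The final and most substantive step is (iii) $\Rightarrow$ (ii). Given $0 \ne v \in \pi \cap \Pi_K$, I would observe that $v$ is smooth (since $\Pi_K \subset \Pi^\infty$) and $K$-finite, hence $K'$-finite in $\pi$, so $v \in \pi_{K'}$. The submodule $M := U(\mathfrak{g}') U(\mathfrak{k}') v$ is then contained in both $\Pi_K$ and $\pi_{K'}$; since $\pi_{K'}$ is $(\mathfrak{g}', K')$-irreducible by Harish-Chandra, $M = \pi_{K'}$, so $\Pi_K$ contains the irreducible unitarizable $(\mathfrak{g}', K')$-submodule $\pi_{K'}$. To deduce (ii) from this, I would invoke the stronger form of Kobayashi's theorem for $A_{\mathfrak{q}}(\lambda)$ on reductive symmetric pairs: the mere existence of a nonzero irreducible unitarizable $(\mathfrak{g}', K')$-submodule of $\Pi_K$ forces $\mathcal{V}_{\mathfrak{g}'}(\Pi_K) \subset \mathcal{N}^*_{G'}$ and hence $K'$-admissibility. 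This propagation---from a single irreducible submodule to a nilpotency condition on the associated variety of the entire module---is the main obstacle; naive inclusions of associated varieties run in the wrong direction, and the argument relies crucially on the rigidity of $A_{\mathfrak{q}}(\lambda)$ (whose associated variety is the closure of a single nilpotent $K_\mathbb{C}$-orbit) together with the symmetry of the pair $(G, G')$.
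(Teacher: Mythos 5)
Your overall architecture is reasonable, and most of the cycle is fine: (ii) $\Rightarrow$ (i) is standard (though not via a ``$K'$-isotypic $(\mathfrak{g}',K')$-stable filtration''---isotypic components are not $\mathfrak{g}'$-stable; the correct elementary route is that $K'$-admissibility makes every finitely generated $(\mathfrak{g}',K')$-submodule admissible, hence of finite length by Harish-Chandra, and unitarizability then yields an algebraic direct sum of irreducibles); (i) $\Rightarrow$ (ii) is correctly outsourced to the main theorem of \cite{xkInvent98}; your closure argument for (i) $\Rightarrow$ (iv) is the standard one ($K$-finite vectors are analytic, so the closure of an irreducible unitarizable summand is a $G'$-irreducible closed subspace with dense intersection); and your reduction in (iii) $\Rightarrow$ (ii) showing $\pi_{K'} \subset \Pi_K$ is correct. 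For comparison, the paper proves the proposition purely by citation: (i) $\Leftrightarrow$ (ii) is \cite[Thm.~4.2]{xkInvent98}, and (i) $\Leftrightarrow$ (iii) $\Leftrightarrow$ (iv) is \cite[Lem.~1.5]{xkInvent98}.

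The genuine gap is the final step of (iii) $\Rightarrow$ (ii), which you yourself identify as the main obstacle and then resolve only by invoking a ``stronger form of Kobayashi's theorem'': that a single nonzero irreducible unitarizable $(\mathfrak{g}',K')$-submodule of $\Pi_K$ already forces $\mathcal{V}_{\mathfrak{g}'}(\Pi_K)$ to lie in the nilpotent cone, hence $K'$-admissibility. You neither prove this nor point to where it is proved, and it is not the statement you cited for (i) $\Rightarrow$ (ii): the associated-variety necessary condition in \cite{xkInvent98} takes discrete decomposability of \emph{all} of $\Pi_K$ (an exhaustive filtration by finite-length $(\mathfrak{g}',K')$-submodules) as its hypothesis, not the existence of one irreducible submodule, and, as you note, naive associated-variety inclusions run the wrong way. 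The missing ingredient is exactly the implication (iii) $\Rightarrow$ (i): for $\Pi \in \widehat{G}$ and $G'$ reductive in $G$, a nonzero irreducible $(\mathfrak{g}',K')$-submodule of $\Pi_K$ forces infinitesimal discrete decomposability of the whole module. This is the content of \cite[Lem.~1.5]{xkInvent98}; it relies on unitarity of $\Pi$ but needs neither the $A_{\mathfrak{q}}(\lambda)$ hypothesis nor the symmetric-pair assumption, so the mechanism you propose (``rigidity'' of the associated variety of $A_{\mathfrak{q}}(\lambda)$ plus symmetry of the pair) is not what carries this step in the literature. Once that lemma is in hand, you simply compose with (i) $\Rightarrow$ (ii) as you already do elsewhere; without it, your cycle does not close.
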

\begin{proof}
The equivalence (i) $\Leftrightarrow$ (ii) is proved
 in \cite[Thm.~4.2]{xkInvent98}. 
The equivalence (i) $\Leftrightarrow$ (iii) $\Leftrightarrow$ (iv)
 follows from \cite[Lem.~1.5]{xkInvent98}.  
\end{proof}

The equivalence holds 
 without the assumption 
 $\Pi_K \simeq A_{\mathfrak{q}}(\lambda)$.  
See also \cite{KOY15, K19}.

Back to our setting,
 we know from the classification theory \cite{decoAq}:

\begin{lemma}
\label{lem:K'adm}
The following three conditions on $(p', p'', q',q'')$ are equivalent:
\begin{enumerate}
\item[{\rm{(i)}}]
$\Pi_K$ is discretely decomposable as a $({\mathfrak{g}}',K')$-module
 for any $\Pi \in \Disc{G/H}$;
\item[{\rm{(ii)}}]
$\Pi_K$ is discretely decomposable as a $({\mathfrak{g}}',K')$-module
 for some $\Pi \in \Disc{G/H}$;
\item[{\rm{(iii)}}]
$p'=0$ or $p''=0$.  
\end{enumerate}
\end{lemma}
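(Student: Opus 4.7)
Our plan is to apply Proposition \ref{prop:discdeco} to convert discrete decomposability of the restriction $\Pi|_{G'}$ into $K'$-admissibility of the underlying $({\mathfrak{g}},K)$-module, and then perform a $K$-type count using the formula $\Xi(K,b)$ of \eqref{eqn:Xib}. The proposition applies to every $\Pi = \pi_{+,\lambda}^{p,q} \in \Disc{G/H}$ because $\Pi_K \simeq A_{\mathfrak{q}}(\lambda-\rho)$ by Definition-Theorem \ref{def:pilmd}(i). The direction (i) $\Rightarrow$ (ii) is immediate since $\Disc{G/H}$ is nonempty by Proposition \ref{prop:discX} under \eqref{eqn:pqbasic}.

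For (iii) $\Rightarrow$ (i), assume $p''=0$ (the case $p'=0$ is completely analogous). Then $K' \simeq O(p) \times O(q') \times O(q'')$ and the factor $\mathcal{H}^m({\mathbb{R}}^p)$ in \eqref{eqn:Xib} needs no further branching under $K'$. For a fixed $K'$-type $\mathcal{H}^{m_0}({\mathbb{R}}^p) \boxtimes \tau' \boxtimes \tau''$ with $\tau' = \mathcal{H}^{n_1}({\mathbb{R}}^{q'})$, $\tau'' = \mathcal{H}^{n_2}({\mathbb{R}}^{q''})$, the index $m$ is forced to equal $m_0$, while the classical branching rule for $\mathcal{H}^n({\mathbb{R}}^q) \downarrow O(q') \times O(q'')$ combined with the constraint $m_0-n \in 2{\mathbb{N}}+b$ forces $n$ to lie in the finite range $n_1+n_2 \le n \le m_0 - b$. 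Hence each $K'$-type occurs with finite multiplicity in $\Xi(K,b)$, so $\Pi_K$ is $K'$-admissible and Proposition \ref{prop:discdeco} yields (i).

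For the contrapositive of (ii) $\Rightarrow$ (iii), assume $p'p''>0$ and fix any $\Pi=\pi_{+,\lambda}^{p,q}\in\Disc{G/H}$. Choose $m_1 \in \{0,1\}$ and $n_1 \in \{0,1\}$ so that $m_1 - n_1 \equiv b \pmod{2}$, and consider the $K'$-type
\[
  \mathcal{H}^{m_1}({\mathbb{R}}^{p'}) \boxtimes \mathbf{1} \boxtimes \mathcal{H}^{n_1}({\mathbb{R}}^{q'}) \boxtimes \mathbf{1}.
\]
By the classical branching rule for spherical harmonics, this $K'$-type is contained in $\mathcal{H}^m({\mathbb{R}}^p) \boxtimes \mathcal{H}^n({\mathbb{R}}^q)$ whenever $m \ge m_1$ and $n \ge n_1$ with $m \equiv m_1$ and $n \equiv n_1$ modulo $2$. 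Fixing any such $n$, the set of admissible $m$ with $m - n \in 2{\mathbb{N}}+b$ is an infinite arithmetic progression, yielding infinite multiplicity. Hence $\Pi_K$ fails to be $K'$-admissible, and Proposition \ref{prop:discdeco} precludes discrete decomposability of $\Pi|_{G'}$.

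The main technical obstacle is the enumeration of edge cases where some of $p',p'',q',q''$ equals $0$ or $1$, since the branching of $\mathcal{H}^{\bullet}({\mathbb{R}}^s)$ degenerates when $s \le 1$. In every such case one still exhibits a $K'$-type for which the constraint $m-n \in 2{\mathbb{N}}+b$ leaves either $m$ or $n$ free on an infinite arithmetic progression, producing infinite multiplicity. This case-by-case verification is already codified in the classification \cite{decoAq}, which we may invoke to conclude the equivalence in its final form.
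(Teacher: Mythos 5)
Your argument is correct, but it takes a different route from the paper: the paper offers no computation here at all, simply quoting Lemma \ref{lem:K'adm} from the classification theory of discretely decomposable $A_{\mathfrak{q}}(\lambda)$ with respect to reductive symmetric pairs \cite{decoAq}, whereas you give a direct proof by combining Proposition \ref{prop:discdeco} (discrete decomposability $\Leftrightarrow$ $K'$-admissibility, valid since $\Pi_K\simeq A_{\mathfrak{q}}(\lambda-\rho)$ and $(G,G')$ is a symmetric pair) with an explicit multiplicity count in the $K$-type formula \eqref{eqn:Xib} via the classical branching rule ${\mathcal{H}}^m({\mathbb{R}}^{s'+s''})|_{O(s')\times O(s'')}\simeq\bigoplus_{a+b\le m,\ a+b\equiv m\,(2)}{\mathcal{H}}^a({\mathbb{R}}^{s'})\boxtimes{\mathcal{H}}^b({\mathbb{R}}^{s''})$. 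Both the positive direction (for $p''=0$ the degree $n$ is pinned to the finite window $n_1+n_2\le n\le m_0-b$ with the correct parity, so each $K'$-type has finite multiplicity) and the negative direction (for $p'p''>0$ the degree $m$ runs over an infinite arithmetic progression compatible with $m-n\in 2{\mathbb{N}}+b$, so some $K'$-type has infinite multiplicity) are sound, and your argument also makes transparent why the answer is independent of $\lambda$, i.e.\ why (i) and (ii) coincide. What your route buys is a self-contained verification inside the paper's own toolkit, essentially the same counting that underlies Lemma \ref{lem:Xiadm}; what the paper's citation buys is brevity and automatic coverage of all degenerate parameters. One remark: your closing appeal to \cite{decoAq} for the edge cases is dispensable. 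Under the standing assumption \eqref{eqn:pqbasic} the only degenerations are $q'\in\{0,1\}$ or $q''\in\{0,1\}$ (recall $p=p'+p''\ge 2$, and $p',p''\ge 1$ in the negative direction), and in each such case the infinitude still comes from the $p$-side: the trivial $O(p'')$-component together with ${\mathcal{H}}^{m_1}({\mathbb{R}}^{p'})$ occurs in ${\mathcal{H}}^m({\mathbb{R}}^p)$ for every $m\ge m_1$ of the right parity, while the $q$-side merely fixes $n$ (to $n_1$, or to $0$ when $q'=0$), so the same $K'$-type still has infinite multiplicity; likewise the finiteness argument for $p''=0$ survives $q'=0$ or $q'=1$ trivially. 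Spelling this out would make your proof fully independent of the classification.
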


Since the discrete decomposability in the category of \gk-module
 implies the discrete decomposability 
 of the unitary representation, 
 the implication (iii) $\Rightarrow$ (i) ($\Rightarrow$ (ii))
in Theorem \ref{thm:discdeco} follows from Lemma \ref{lem:K'adm}.

To prove the converse implication (ii) $\Rightarrow$ (iii) in Theorem \ref{thm:discdeco}, 
 the following lemma is crucial.  
\begin{lemma}
\label{lem:Xiadm}
Let $G/H=O(p,q)/O(p-1,q)$ $(=X(p,q))$.  
Then the direct sum $\bigoplus_{\Pi \in \Disc{G/H}} \Pi$
 is $K$-admissible.  
\end{lemma}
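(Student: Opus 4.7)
The plan is to read off $K$-admissibility directly from the explicit $K$-type formula recorded in condition (iii) of Definition-Theorem \ref{def:pilmd}. By Proposition \ref{prop:discX}, $\Disc{G/H}$ is identified with $\{\pi_{+,\lambda}^{p,q} : \lambda \in A_+(p,q)\}$, and each summand decomposes under $K=O(p)\times O(q)$ as the multiplicity-free sum $\Xi(K,b)$ with $b=b_+(\lambda,p,q)=\lambda-\frac{p-q}{2}+1$. I will therefore count, for a fixed $K$-type $\mathcal{H}^m(\mathbb{R}^p)\boxtimes\mathcal{H}^n(\mathbb{R}^q)$, the set of $\lambda\in A_+(p,q)$ for which this $K$-type appears in $\pi_{+,\lambda}^{p,q}$, and show it is finite.

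In the principal case $p \geq 2,\ q \geq 1$, the assignment $\lambda\mapsto b$ gives a bijection between $A_+(p,q)$ and a one-sided infinite subset of $\mathbb{Z}$ bounded below by $\frac{q-p}{2}+1$. From the definition \eqref{eqn:Xib}, the condition $\mathcal{H}^m(\mathbb{R}^p)\boxtimes\mathcal{H}^n(\mathbb{R}^q)\subset \Xi(K,b)$ amounts to $b\le m-n$ together with $b\equiv m-n\pmod 2$. Combined with the lower bound on $b$, these conditions single out only finitely many admissible values of $b$, hence only finitely many $\lambda\in A_+(p,q)$, which yields the required multiplicity bound.

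The remaining boundary cases are handled separately and present no difficulty. If $p\le 1$ and $q\ge 1$, then $A_+(p,q)=\emptyset$ by \eqref{eqn:A+}, so the sum is trivial. If $q=0$, then $\pi_{+,\lambda}^{p,0}\simeq\mathcal{H}^{\lambda-p/2+1}(\mathbb{R}^p)$ is itself a single irreducible $K$-module, and distinct $\lambda$ produce pairwise inequivalent $K$-types by Proposition \ref{prop:pilmd}, so every $K$-type occurs with multiplicity at most one in the sum.

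I do not expect any substantial obstacle here: the entire argument reduces to bookkeeping of parities and congruence classes among integers. The substantive content, namely the uniform multiplicity-one $K$-type formula indexed by the single parameter $b$, has already been supplied by Definition-Theorem \ref{def:pilmd}, and the lower bound on $b$ inherited from the condition $\lambda>0$ in \eqref{eqn:A+} is precisely what forces finiteness of the count.
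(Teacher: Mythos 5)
Your proposal is correct and follows the paper's own route: the paper proves this lemma in one line by citing exactly the two ingredients you use, namely the classification $\Disc{G/H}=\{\pi_{+,\lambda}^{p,q}:\lambda\in A_+(p,q)\}$ from Proposition \ref{prop:discX} and the multiplicity-free $K$-type formula $\Xi(K,b)$ from condition (iii) of Definition-Theorem \ref{def:pilmd}. Your contribution is merely to spell out the elementary counting (the lower bound on $b=\lambda-\frac{p-q}{2}+1$ forced by $\lambda>0$, together with $b\le m-n$ and the parity constraint), which is exactly the bookkeeping the paper leaves implicit.
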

\begin{proof}
This follows from the classification of $\Disc{G/H}$ 
 in Proposition \ref{prop:discX} 
 and from the $K$-type formula of $\Pi$ 
 as seen in the condition (iii) of Definition-Theorem \ref{def:pilmd}.  
\end{proof}

Combining Lemma \ref{lem:Xiadm} with Theorem \ref{thm:2002}, 
 we have
\begin{proposition}
\label{prop:Discadm}
For any $\Pi \in \Disc{G/H}$, 
 $(\Pi|_{G'})_{\operatorname{disc}}$ is $K'$-admissible.  
\end{proposition}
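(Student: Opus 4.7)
The plan is to combine Theorem \ref{thm:2002} with Lemma \ref{lem:Xiadm} applied to the two smaller factor groups $O(p',q')$ and $O(p'',q'')$. Theorem \ref{thm:2002} gives the explicit Hilbert direct sum decomposition of $(\Pi|_{G'})_{\operatorname{disc}}$ into irreducibles of the form $\pi_{\delta,\lambda'}^{p',q'} \boxtimes \pi_{\varepsilon,\lambda''}^{p'',q''}$ indexed by $(\delta,\varepsilon) \in \{\amp,\app,\apm\}$ and $(\lambda',\lambda'')\in \Lambda_{\delta\varepsilon}(\lambda)$. Writing $K' = K'_1 \times K'_2$ with $K'_i$ the maximal compact subgroup of the $i$-th factor of $G'$, and fixing a $K'$-type $\tau = \tau'\boxtimes\tau''$, the multiplicity of $\tau$ in $(\Pi|_{G'})_{\operatorname{disc}}$ decomposes as a sum over $(\delta,\varepsilon)$ and $(\lambda',\lambda'')\in \Lambda_{\delta\varepsilon}(\lambda)$ of products $\dim\Hom_{K'_1}(\tau',\pi_{\delta,\lambda'}^{p',q'})\cdot \dim\Hom_{K'_2}(\tau'',\pi_{\varepsilon,\lambda''}^{p'',q''})$.

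The next step is to apply Lemma \ref{lem:Xiadm} to each factor. For $i=1$, the lemma applied to the discrete series of $X(p',q')$ (and, via the isomorphism $O(p',q')\simeq O(q',p')$ described in Section \ref{subsec:pilmd}, also to $X(q',p')$) gives the $K'_1$-admissibility of the Hilbert direct sum $\bigoplus_{\lambda'\in A_\delta(p',q')}\pi_{\delta,\lambda'}^{p',q'}$ for each $\delta=\pm$. The analogous statement holds for $i=2$. In particular, for any fixed $\tau'\in\widehat{K'_1}$ the quantity $\sum_{\lambda' \in A_\delta(p',q')}\dim\Hom_{K'_1}(\tau',\pi_{\delta,\lambda'}^{p',q'})$ is finite, and likewise for $\tau''$. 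Since $\Lambda_{\delta\varepsilon}(\lambda)\subset A_\delta(p',q')\times A_\varepsilon(p'',q'')$, the total multiplicity of $\tau$ in $(\Pi|_{G'})_{\operatorname{disc}}$ is bounded above by
\[
  \sum_{(\delta,\varepsilon)}\Bigl(\sum_{\lambda' \in A_\delta(p',q')}\dim\Hom_{K'_1}(\tau',\pi_{\delta,\lambda'}^{p',q'})\Bigr)\Bigl(\sum_{\lambda'' \in A_\varepsilon(p'',q'')}\dim\Hom_{K'_2}(\tau'',\pi_{\varepsilon,\lambda''}^{p'',q''})\Bigr),
\]
a finite sum of products of finite quantities, hence finite. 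This establishes the $K'$-admissibility of $(\Pi|_{G'})_{\operatorname{disc}}$.

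The argument is a clean assembly of results already in hand, so there is no serious technical obstacle. The one point requiring attention is that Lemma \ref{lem:Xiadm} as stated covers only $A_+$-families of discrete series, so the reduction to the $A_-$-families $\pi_{-,\lambda'}^{p',q'}$ must be routed through the identification $\pi_{-,\lambda'}^{p',q'}\simeq \pi_{+,\lambda'}^{q',p'}$, which is immediate from the definition in Section \ref{subsec:pilmd}.
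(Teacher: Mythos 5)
Your proposal is correct and follows essentially the same route as the paper: the paper's proof is precisely the one-line combination of Theorem \ref{thm:2002} with Lemma \ref{lem:Xiadm}, and your argument fills in the intended details—apply the lemma to the factor groups $O(p',q')$ and $O(p'',q'')$ (using $\pi_{-,\lambda'}^{p',q'}\simeq\pi_{+,\lambda'}^{q',p'}$ for the minus families) and bound the $K'$-multiplicities through the multiplicity-free decomposition \eqref{eqn:2.1.1}. No gaps.
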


We are ready to complete the proof of Theorem \ref{thm:discdeco}.  
\begin{proof}
[Proof of the implication (ii) $\Rightarrow$ (iii) in Theorem \ref{thm:discdeco}]
Suppose that the restriction $\Pi|_{G'}$ is discretely decomposable
 as a unitary representation of the subgroup $G'$, 
 {\it{i.e.,}}
 $\Pi|_{G'}=(\Pi|_{G'})_{\operatorname{disc}}$.  
Then $\Pi$ is $K'$-admissible by Proposition \ref{prop:Discadm},
 and so is the underlying \gk-module $\Pi_K$.  
Hence $p'=0$ or $p''=0$ by Lemma \ref{lem:K'adm}.  
Thus Theorem \ref{thm:discdeco} is proved.  
\end{proof}

\begin{proof}
[Proof of Corollary \ref{cor:discdeco}]
By Theorem \ref{thm:discdeco}, 
 the condition (i) in Corollary \ref{cor:discdeco}
 is equivalent to the following:
\newline
(i)\enspace
$p' p'' \ne 0$, 
\newline
whereas the condition (ii) is clearly equivalent to 
\newline
(ii)$'$\enspace
For any $\pi \in \widehat {G'}$
 and any $\iota \in \Hom_{G'}(\pi, \Pi|_{G'})$, 
 $\iota(\pi) \cap \Pi^{\infty} = \{0\}$.  
\newline
Let us prove the equivalence (i)$'$ $\Leftrightarrow$ (ii)$'$.  
\newline
{(ii)$'$ $\Rightarrow$ (i)$'$:}\enspace
Suppose $p'p''=0$.  
Then $\iota(\pi) \cap \Pi_K \ne \{0\}$
 by Proposition \ref{prop:discdeco}, 
 whence $\iota(\pi) \cap \Pi^{\infty}\ne \{0\}$
 because $\Pi_K \subset \Pi^{\infty}$.  
\newline
{(i)$'$ $\Rightarrow$ (ii)$'$}:\enspace
Conversely,
 suppose $\iota\colon \pi \to \Pi|_{G'}$ is a nonzero continuous $G'$-homomorphism 
 for some $\pi \in \widehat{G'}$.  
Then $\pi$ must be of the form 
 $\pi_{\delta,\lambda'}^{p',q'}
 \boxtimes \pi_{\varepsilon,\lambda''}^{p'',q''}$
 for some $(\delta,\varepsilon)$ and $(\lambda',\lambda'')$, 
 and $\iota$ must be a scalar multiple
 of $T_{\delta\varepsilon, \lambda}^{\lambda',\lambda''}$
 by Theorems \ref{thm:2002} and \ref{thm:holographic}.  
If $p' p'' \ne 0$, 
 then it follows from the definition of $X(p,q)_{\delta\varepsilon}$ 
 in Section \ref{subsec:orbits} that at least two of the open sets
 $X(p,q)_{\amp}$, $X(p,q)_{\app}$, 
 $X(p,q)_{\apm}$ are nonempty,
 and thus 
$
   \operatorname{Image} 
   T_{\delta\varepsilon, \lambda}^{\lambda',\lambda''}
   \cap C^{\infty}(X(p,q)) =\{0\}
$
 by the definition of $T_{\delta\varepsilon, \lambda}^{\lambda',\lambda''}$
 in Section \ref{subsec:ho}.  
Since $\Pi^{\infty} \subset C^{\infty}(X(p,q))$, 
 this shows that $\iota(\pi) \cap \Pi^{\infty} =\{0\}$.  
Therefore, 
 we have shown the implication (i)$'$ $\Rightarrow$ (ii)$'$.  
\end{proof}

\section{Appendix ---multiplicity in branching laws}

As viewed in \cite{xKVogan2015}, 
 we divide branching problems into the following three stages:
\par\indent
\text{Stage A}:\enspace
Abstract features of the restriction; 
\par\indent
\text{Stage B}:\enspace
Branching laws 
 (irreducible decomposition of restrictions);
\par\indent
\text{Stage C}:\enspace
Construction of symmetry breaking/holographic operators.

The role of Stage A is to develop
 an abstract theory on the restriction
 of representations
 as generally as possible.  
In turn, 
 we could expect a detailed study of the restriction
 in Stages B and C
 in the specific settings
 that are {\it{a priori}} guaranteed
 to be \lq\lq{nice}\rq\rq\
 in Stage A.  
Conversely, 
 new results and methods in Stage C
 may indicate a further fruitful direction
 of branching problems
 including Stage A.

The present article has focused on analytic problems
 in Stages B and C
 in the setting where the triple $H \subset G \supset G'$ is given by 
\begin{equation}
\label{eqn:Opq3}
(G,H,G')=(O(p,q), O(p-1,q), O(p',q')\times O(p-p',q-q')).  
\end{equation}
Then one might wonder what are the abstract features (Stage A)
 which have arisen from this article, 
 and also might be curious about a possible generalization
 beyond the setting \eqref{eqn:Opq3}.  
The spectral property 
 of the branching laws is such an aspect, 
 which we discussed in Section \ref{sec:comments}.  
Another aspect of Theorem \ref{thm:2002} is 
 the multiplicity-free property:
\begin{equation}
\label{eqn:mult-one}
  m_{\Pi}(\pi) \le 1
\qquad
\text{
${}^{\forall} \pi \in \widehat {G'}$
 and
${}^{\forall} \Pi \in \operatorname{Disc}(G/H)$.  
}
\end{equation}
Here, 
 for $\Pi \in \widehat G$, 
 the multiplicity $m_{\Pi}(\pi)$ of $\pi \in \widehat {G'}$
 as the {\it{discrete spectrum}}
 of the (unitary) restriction $\Pi|_{G'}$
 is defined by
\[
m_{\Pi}(\pi):= \dim_{\mathbb{C}} \invHom {G'}{\pi}{\Pi|_{G'}}
=\dim_{\mathbb{C}}\invHom {G'}{\Pi|_{G'}}{\pi}
\in {\mathbb{N}} \cup \{\infty\}.  
\]

In this Appendix, 
 we give a flavor of some multiplicity estimates (Stage A)
 in a broader setting
 than \eqref{eqn:Opq3}, 
 for instance, 
 when
\begin{equation}
\label{eqn:triplesymm}
\text{both $(G,H)$ and $(G,G')$ are reductive symmetric pairs.}
\end{equation}
In what follows, 
we treat not only discrete series representations
 $\Pi \in \operatorname{Disc}(G/H)$
 but also non-unitary representations
 that have a non-trivial 
$H$-period (or is $H$-distinguished)
 as well.  
We recall that
 there is a canonical equivalence
 of categories 
 between the category ${\mathcal{H C}}$
 of $({\mathfrak{g}}, K)$-modules
 of finite length and the category ${\mathcal{M}}$
 of smooth admissible representations
 of moderate growth
  by the Casselman--Wallach globalization theory 
 \cite[Chap.~11]{WaI}.  
Denote by $\operatorname{Irr}(G)$
 the set of irreducible objects
 in ${\mathcal{M}}$.  
The unitary dual $\widehat G$ may be thought
 of as a subset of $\operatorname{Irr}(G)$
 by taking smooth vectors:
\begin{equation}
\label{eqn:Piinfty}
\widehat G \hookrightarrow \operatorname{Irr}(G), 
\qquad
\Pi \mapsto \Pi^{\infty}.  
\end{equation}

For $\Pi^{\infty} \in \operatorname{Irr}(G)$
 and $\pi^{\infty} \in \operatorname{Irr}(G')$, 
 we set
\[
m_{\Pi^{\infty}}(\pi^{\infty})
:= \dim_{\mathbb{C}} \invHom {G'}{\Pi^{\infty}|_{G'}}{\pi^{\infty}}.  
\]
In general, 
 for any $\Pi \in \widehat G$, 
one has 
 $m_{\Pi}(\pi) \le m_{\Pi^{\infty}}(\pi^{\infty})$
 for all $\pi \in \widehat {G'}$, 
 and 
 $m_{\Pi}(\pi) \le n_{\Pi}(\pi) \le m_{\Pi^{\infty}}(\pi^{\infty})$
 a.e.~$\pi \in \widehat {G'}$
 with respect to the measure
 for the disintegration \eqref{eqn:directint}
 of the (unitary) restriction $\Pi|_{G'}$, 
 where we recall
 $n_{\Pi} \colon \widehat {G'} \to {\mathbb{N}} \cup \{\infty\}$
 is the measurable function
 which gives the multiplicity
 in \eqref{eqn:directint}.

For a closed subgroup $H$ of $G$, 
 we define
\begin{equation*}
\operatorname{Irr}(G)_H
:=\{\Pi^{\infty} \in \operatorname{Irr}(G)
:(\Pi^{-\infty})^H \ne \{0\}
\}, 
\end{equation*}
where $\Pi^{-\infty}$ denotes the representation 
 on the space
 of distribution vectors.

Then $\operatorname{Disc}(G/H)$ may be thought of 
 as a subset of $\operatorname{Irr}(G)_H$
 via \eqref{eqn:Piinfty}.

Now we address the following:
\begin{prob}
\label{q:bdd}
Find a criterion for a triple $H \subset G \supset G'$
 with bounded multiplicity property
 for the restriction:
 there exists $C>0$
 such that
\begin{equation}
\label{eqn:BBH}
m_{\Pi^{\infty}}(\pi^{\infty}) \le C
\qquad
\text{${}^{\forall} \pi^{\infty} \in \operatorname{Irr}(G')$
 and ${}^{\forall} \Pi^{\infty} \in \operatorname{Irr}(G)_H$}.  
\end{equation}
\end{prob}

Note that the condition \eqref{eqn:BBH} immediately implies
\begin{equation}
\label{eqn:BBDisc}
m_{\Pi}(\pi) \le C  
\qquad
\text{
${}^{\forall}\pi \in \widehat {G'}$
 and 
${}^{\forall}\Pi \in \operatorname{Disc}(G/H)$.  
}
\end{equation}
We also note that \eqref{eqn:mult-one} is nothing but \eqref{eqn:BBDisc}
 with $C=1$.

We recall some general results in the setting 
 where $H=\{e\}$ from \cite[Thms.~C and D]{xktoshima}
 and \cite[Thm.~4.2]{xkInvent98}
 (see also Proposition \ref{prop:discdeco}):

{\bf{Bounded multiplicity:}}\enspace
$(G_{\mathbb{C}} \times G_{\mathbb{C}}')/\operatorname{diag} G_{\mathbb{C}}'$ is spherical 
 iff
\begin{equation}
\label{eqn:BB}
\text{${}^{\exists}C>0
\quad
m_{\Pi^{\infty}}(\pi^{\infty}) \le C
$
 \quad
 ${}^{\forall}\pi^{\infty} \in \operatorname{Irr}(G')$
 and ${}^{\forall}\Pi^{\infty} \in \operatorname{Irr}(G)$.  }
\end{equation}

{\bf{Finite multiplicity:}}\enspace
$(G \times G')/\operatorname{diag} G'$ is real spherical 
iff
\begin{equation}
\label{eqn:PP}
\text{
$m_{\Pi^{\infty}}(\pi^{\infty}) < \infty$
\quad
${}^{\forall} \pi^{\infty} \in \operatorname{Irr}(G')$
 and 
${}^{\forall}\Pi^{\infty} \in \operatorname{Irr}(G)$. 
}
\end{equation}

{\bf{Admissible restriction:}}\enspace
If $\Pi_K$ is discretely decomposable
 as a $({\mathfrak{g}}',K')$-module
 and if $(G,G')$ is a symmetric pair, 
 then 
\begin{equation}
\label{eqn:Wconj}
\text{$m_{\Pi}(\pi)=m_{\Pi^{\infty}}(\pi^{\infty})<\infty$
 for all $\pi \in \widehat{G'}$.  }
\end{equation}
(This generalizes Harish-Chandra's admissibility theorem
 for compact $G'$.)

In these cases, 
 explicit criteria lead us to the classification theory.  
The criterion \cite{xktoshima} for \eqref{eqn:BB} depends
 only on the complexification $({\mathfrak{g}}_{\mathbb{C}}, {\mathfrak{g}}_{\mathbb{C}}')$, 
 hence the classification for \eqref{eqn:BB}
 for simple ${\mathfrak{g}}_{\mathbb{C}}$ simple
 reduces to a classical result \cite{xkramer}:
\begin{equation}
\label{eqn:BBlist}
({\mathfrak{g}}_{\mathbb{C}}, {\mathfrak{g}}_{\mathbb{C}}')
=
({\mathfrak{sl}}_n, {\mathfrak{gl}}_{n-1}), 
({\mathfrak{so}}_{n}, {\mathfrak{so}}_{n-1}),
\text{ or } 
({\mathfrak{so}}_8, {\mathfrak{spin}}_7). 
\end{equation}
In this case, 
one can take $C=1$
 for most of the real forms
 \cite{xsunzhu}.  
On the other hand, 
 irreducible symmetric pairs 
 $({\mathfrak{g}}, {\mathfrak{g}}')$ satisfying \eqref{eqn:PP}
 were classified in \cite{xKMt}.  
The triples $(A_{\mathfrak{q}}(\lambda), {\mathfrak{g}}, {\mathfrak{g}}')$
 having discretely decomposable restrictions
 $A_{\mathfrak{q}}(\lambda)|_{{\mathfrak{g}}'}$ were classified
 in \cite{decoAq}.

We now consider the setting \eqref{eqn:triplesymm}.  
In this generality, 
 \eqref{eqn:BBDisc} may fail.  
The following example is a reformulation
 of \cite[Ex.~5.5]{xkAdv00}
 (cf. \cite[Sect.~6.3]{mf-korea}).  
\begin{example}
\label{ex:sp2C}
$(G,H,G')=(SO(5,{\mathbb{C}}),SO(3,2), SO(3,2))$.  
Then for any $\Pi \in \operatorname{Disc}(G/H)$
 there exists $\pi \in \widehat {G'}$
 such that $m_{\Pi}(\pi)=\infty$.  
(In this case, 
 the disintegration $\Pi|_{G'}$ contains
 continuous spectrum, 
 see \eqref{eqn:Wconj}.)
\end{example}

As we shall see in Observation \ref{obs:0.6} (1) below, 
 the bounded multiplicity property \eqref{eqn:BBH} often holds
 if $\operatorname{rank}G/H=1$, 
 but not always: 

\begin{example}
\label{ex:SU3}
Let $(G,H,G')=(SU(3),U(2), SO(3))$.  
Then \eqref{eqn:BBDisc} fails 
 because $m_{\Pi_n}(\pi_n)=[\frac n 2]+1$
 where $\Pi_n \in \operatorname{Disc}(G/H)$
 and $\pi_n \in \widehat{G'}$
 are of dimensions $(n+1)^3$ and $2n+1$, 
 respectively.  
\end{example}

\begin{example}
\label{ex:SL3}
Let $(G,H,G')=(SL(3,{\mathbb{R}}),GL(2,{\mathbb{R}}), SO(3))$.  
Then \eqref{eqn:BBDisc}  fails
 because $\sup_{\pi \in \widehat {G'}} m_{\Pi}(\pi)=\infty$
 for any $\Pi \in \operatorname{Disc}(G/H)$.  
\end{example}

The last example may be compared with the following:
\begin{example}
Let $(G,H,G')=(S L(4,{\mathbb{R}}), S p(2,{\mathbb{R}}), S O(4))$.  
Then \eqref{eqn:BBDisc} holds 
because $\sup_{\pi \in \widehat{G'}} m_{\Pi}(\pi) =1$
 for any $\Pi \in \operatorname{Disc}(G/H)$.  
\end{example}

To describe an answer to Problem \ref{q:bdd} (Stage A)
 which covers not only discrete series representations
 $\Pi \in \operatorname{Disc}(G/H)$
 but also any irreducible representations $\Pi^{\infty}$ 
realized in $C^{\infty}(G/H)$, 
 we fix some notation.  
Denote by $\sigma$ the involution of $G$
 that defines a symmetric pair $(G,H)$.  
We use the same letter $\sigma$
 to denote the complex linear extension of its differential.  
We write $G_{\mathbb{C}}$
 for a complexification of $G$, 
 and $G_U$ for a compact real form of $G_{\mathbb{C}}$.  
Let ${\mathfrak{j}}_{\mathbb{C}}$ be a maximal semisimple abelian
 subspace 
 in ${\mathfrak{g}}_{\mathbb{C}}^{-\sigma}=\{X \in {\mathfrak{g}}_{\mathbb{C}}:\sigma X=-X\}$, 
 and $Q_{\mathbb{C}}$ a parabolic subgroup 
 of $G_{\mathbb{C}}$
 with Levi part $Z_{G_{\mathbb{C}}}({\mathfrak{j}}_{\mathbb{C}})$.  

\begin{theorem}
\label{thm:bdd}
Suppose that $(G,H)$ is a reductive symmetric pair, 
 and $G'$ an (algebraic) reductive subgroup of $G$.  
Then the following three conditions on the triple $(G,H,G')$
 are equivalent:
\begin{enumerate}
\item[{\rm{(i)}}]
${}^{\exists}C>0$, 
 $m_{\Pi^{\infty}}(\pi^{\infty}) \le C$
\quad
 ${}^{\forall}\Pi^{\infty} \in \operatorname{Irr}(G)_H$
 and ${}^{\forall}\pi^{\infty} \in \operatorname{Irr}(G')$.  
\item[{\rm{(ii)}}]
$G_{\mathbb{C}}/Q_{\mathbb{C}}$ is $G_{\mathbb{C}}'$-spherical.  
\item[{\rm{(iii)}}]
$G_{\mathbb{C}}/Q_{\mathbb{C}}$ is $G_U'$-strongly visible.  
\end{enumerate}
\end{theorem}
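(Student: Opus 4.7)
The plan is to prove the equivalence by assembling three independent general results: an embedding theorem for $H$-distinguished representations into a principal series induced from a specific parabolic, the Kobayashi--Oshima uniform bounded-multiplicity criterion for such inductions, and Kobayashi's theorem identifying sphericity with strong visibility.

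First I would establish the geometric equivalence (ii) $\Leftrightarrow$ (iii). Since $Q_{\mathbb{C}}$ is a parabolic subgroup of $G_{\mathbb{C}}$, the complex manifold $G_{\mathbb{C}}/Q_{\mathbb{C}}$ is a (generalized) flag variety, and it is $G'_{\mathbb{C}}$-spherical precisely when a Borel subgroup of $G'_{\mathbb{C}}$ has an open orbit. By Kobayashi's general theorem on visible actions on complex manifolds, for a connected complex reductive group action on a compact complex manifold, sphericity of the complex action is equivalent to strong visibility of the compact real form action. Applying this with the complex reductive group $G'_{\mathbb{C}}$ acting on $G_{\mathbb{C}}/Q_{\mathbb{C}}$ and its compact real form $G'_U$ yields the equivalence.

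Next I would reduce (i) to a uniform multiplicity estimate for a single degenerate principal series. Let $\sigma$ and $\theta$ denote the defining involutions for $H$ and $K$, chosen to commute. By construction, the parabolic $Q$ with Levi part $Z_G(\mathfrak{j})$ is a minimal $\sigma\theta$-stable parabolic subgroup of $G$ whose complexification is $Q_{\mathbb{C}}$. By the Matsuki--Oshima embedding theorem (a $(\sigma,\theta)$-refinement of the Casselman subrepresentation theorem), every $\Pi^{\infty} \in \operatorname{Irr}(G)_H$ embeds, as a $G$-module, into a degenerate principal series $\operatorname{Ind}_Q^G(\tau)^{\infty}$ for some irreducible finite-dimensional representation $\tau$ of the Levi part of $Q$. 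Conversely, for generic parameters $\tau$, all irreducible subquotients of $\operatorname{Ind}_Q^G(\tau)^{\infty}$ have nontrivial $H$-distribution vectors (an open $H \times Q$-orbit argument on $G$), so they belong to $\operatorname{Irr}(G)_H$. Hence (i) is equivalent to the uniform bound
\[
\sup_{\tau,\,\pi^{\infty} \in \operatorname{Irr}(G')} \dim \operatorname{Hom}_{G'}\bigl(\operatorname{Ind}_Q^G(\tau)^{\infty},\,\pi^{\infty}\bigr) < \infty.
\]

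The final step is to invoke the Kobayashi--Oshima criterion \cite{xktoshima} for uniform bounded multiplicity of symmetry breaking from a parabolic induction: the displayed supremum is finite if and only if $G_{\mathbb{C}}/Q_{\mathbb{C}}$ is $G'_{\mathbb{C}}$-spherical. This delivers the equivalence (i) $\Leftrightarrow$ (ii) and completes the proof together with the first paragraph.

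The main obstacle I anticipate is the second step, namely ensuring that the class $\operatorname{Irr}(G)_H$ is \emph{exactly} captured by subquotients of the principal series induced from the specific parabolic $Q$ attached to $\mathfrak{j}_{\mathbb{C}}$. The embedding direction is standard, but the converse (that failure of sphericity for $G_{\mathbb{C}}/Q_{\mathbb{C}}$ already forces unbounded multiplicities \emph{within the $H$-distinguished class}, rather than merely in all of $\operatorname{Irr}(G)$) requires verifying that one can select $\tau$ whose induced module produces $H$-distinguished irreducible quotients realizing the multiplicity growth predicted by the Kobayashi--Oshima estimate; this is a density/genericity argument on the parameter space of $\tau$ combined with an open-orbit analysis of $H$ on $G/Q$.
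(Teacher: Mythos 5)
A preliminary remark: the paper itself does not prove Theorem \ref{thm:bdd} --- it states that proofs of the Appendix assertions will appear in another paper, and for (ii) $\Leftrightarrow$ (iii) it cites \cite{tanaka} (together with \cite[Cor.~15]{xrims40}). Your first paragraph is consistent with that: sphericity $\Rightarrow$ strong visibility for flag varieties is in \cite{xrims40}, while the converse in this generality is Tanaka's theorem \cite{tanaka}, so apart from the attribution your (ii) $\Leftrightarrow$ (iii) step is fine. The problem is your treatment of (i) $\Leftrightarrow$ (ii).

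The central gap is your assertion that ``the parabolic $Q$ with Levi part $Z_G(\mathfrak{j})$ is a minimal $\sigma\theta$-stable parabolic subgroup of $G$ whose complexification is $Q_{\mathbb{C}}$.'' In the paper, $\mathfrak{j}_{\mathbb{C}}$ is a maximal semisimple abelian subspace of $\mathfrak{g}_{\mathbb{C}}^{-\sigma}$, so $\dim \mathfrak{j}_{\mathbb{C}}=\operatorname{rank} G_{\mathbb{C}}/H_{\mathbb{C}}$, which in general strictly exceeds the dimension of any $\mathbb{R}$-split abelian subspace of $\mathfrak{g}^{-\sigma}$; consequently $Q_{\mathbb{C}}$ is usually \emph{not} the complexification of any real parabolic subgroup of $G$. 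Two cases covered by the theorem make this concrete: (a) compact $G$, e.g.\ $(G,H,G')=(SU(3),U(2),SO(3))$ of Example \ref{ex:SU3}, where $G$ has no proper real parabolic subgroups at all, yet $Q_{\mathbb{C}}$ is a proper parabolic of $SL(3,\mathbb{C})$ and the theorem has nontrivial content (unbounded multiplicities); (b) the group case $(G,H)=(G_1\times G_1,\operatorname{diag}G_1)$ with $G_1$ not quasi-split, where $Q_{\mathbb{C}}$ is a product of opposite Borel subgroups, not defined over $\mathbb{R}$, whereas the Casselman subrepresentation theorem only gives embeddings into induction from the minimal \emph{real} parabolic, whose complexification is strictly larger than $Q_{\mathbb{C}}$; sphericity of the correspondingly smaller flag variety is a strictly weaker condition than (ii), so your reduction would prove the criterion for the wrong parabolic. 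On top of this, the ``final step'' is not available as stated: the bounded-multiplicity theorems of \cite{xktoshima} concern restriction for the full $\operatorname{Irr}(G)$ (criterion: sphericity of $(G_{\mathbb{C}}\times G_{\mathbb{C}}')/\operatorname{diag}G_{\mathbb{C}}'$) and harmonic analysis on $G/H$ itself; they do not assert that $\sup_{\tau}\dim\operatorname{Hom}_{G'}(\operatorname{Ind}_Q^G(\tau)^{\infty}|_{G'},\pi^{\infty})<\infty$, with $\tau$ running over \emph{all} finite-dimensional irreducibles of a reductive Levi (of unbounded dimension), is equivalent to $G_{\mathbb{C}}'$-sphericity of $G_{\mathbb{C}}/Q_{\mathbb{C}}$. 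That sharp uniform estimate, phrased in terms of the complex parabolic $Q_{\mathbb{C}}$ attached to the symmetric pair, is essentially the content of (i) $\Leftrightarrow$ (ii) itself, so your argument is circular at the decisive point. The genericity/exhaustion issue you flag at the end is real, but it is secondary to these two problems: the reduction to real parabolic induction from $Q$ is not even defined in general, and the multiplicity criterion you want to quote is not in the cited literature in the required form.
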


See \cite{tanaka} 
 (see also \cite[Cor.~15]{xrims40})
for the equivalence (ii) $\Leftrightarrow$ (iii).

\begin{remark}
 The multiplicity-freeness 
 \eqref{eqn:mult-one} holds for compact forms.  
\end{remark}

It should be mentioned that the bounded multiplicity property (i)
 depends {\it{a priori}} 
 on the real form $(G,H,G')$, 
 however, 
Theorem \ref{thm:bdd} tells that its criterion (ii)
 (or equivalently (iii))
 can be stated only by the complexification
 of the Lie algebras $({\mathfrak{g}}, {\mathfrak{h}}, {\mathfrak{g}}')$.  
Here is a complete classification
 of such triples $({\mathfrak{g}}_{\mathbb{C}}, {\mathfrak{h}}_{\mathbb{C}}, {\mathfrak{g}}_{\mathbb{C}}')$
 when ${\mathfrak{g}}_{\mathbb{C}}$ is simple:

\begin{corollary}
[classification]
\label{cor:bdd}
Assume ${\mathfrak{g}}_{\mathbb{C}}$ is simple in the setting \eqref{eqn:triplesymm}.  
Then the bounded multiplicity property \eqref{eqn:BBH}
 holds for the triple $(G,H,G')$ 
 iff the complexified Lie algebras 
 $({\mathfrak{g}}_{\mathbb{C}}, {\mathfrak{h}}_{\mathbb{C}}, {\mathfrak{g}}_{\mathbb{C}}')$ 
 are in Table \ref{tab:0.1}
 up to automorphisms.  
In the table, 
 $p$, $q$ are arbitrary subject to $n=p+q$.

\begin{table}[H]
\begin{minipage}[t]{.45\textwidth}
\begin{tabular}[t]{ccc}
${\mathfrak{g}}_{\mathbb{C}}$
&${\mathfrak{h}}_{\mathbb{C}}$
&${\mathfrak{g}}_{\mathbb{C}}'$
\\
\hline
${\mathfrak{sl}}_n$
&${\mathfrak{gl}}_{n-1}$
&${\mathfrak{sl}}_p \oplus {\mathfrak{sl}}_q \oplus {\mathbb{C}}$
\\
${\mathfrak{sl}}_{2m}$
&${\mathfrak{gl}}_{2m-1}$
&${\mathfrak{sp}}_m$
\\
${\mathfrak{sl}}_{6}$
&${\mathfrak{sp}}_{3}$
&${\mathfrak{sl}}_4 \oplus {\mathfrak{sl}}_2 \oplus {\mathbb{C}}$
\\
${\mathfrak{so}}_n$
&${\mathfrak{so}}_{n-1}$
&${\mathfrak{so}}_p \oplus {\mathfrak{so}}_q$
\\
${\mathfrak{so}}_{2m}$
&${\mathfrak{so}}_{2m-1}$
&${\mathfrak{gl}}_m$
\\
${\mathfrak{so}}_{2m}$
&${\mathfrak{so}}_{2m-2} \oplus {\mathbb{C}}$
&${\mathfrak{gl}}_m$
\\
${\mathfrak{sp}}_n$
&${\mathfrak{sp}}_{n-1} \oplus {\mathfrak{sp}}_1$
&${\mathfrak{sp}}_p \oplus {\mathfrak{sp}}_q$
\\
${\mathfrak{sp}}_n$
&${\mathfrak{sp}}_{n-2} \oplus {\mathfrak{sp}}_2$
&${\mathfrak{sp}}_{n-1} \oplus {\mathfrak{sp}}_1$
\\
${\mathfrak{e}}_6$
&${\mathfrak{f}}_4$
&${\mathfrak{so}}_{10} \oplus {\mathbb{C}}$
\\
${\mathfrak{f}}_4$
&${\mathfrak{so}}_{9}$
&${\mathfrak{so}}_9$
\\
\end{tabular}
  \end{minipage}
  \hfill
  \begin{minipage}[t]{.45\textwidth}
\begin{tabular}[t]{ccc}
${\mathfrak{g}}_{\mathbb{C}}$
&${\mathfrak{h}}_{\mathbb{C}}$
&${\mathfrak{g}}_{\mathbb{C}}'$
\\
\hline
${\mathfrak{sl}}_n$
&${\mathfrak{so}}_{n}$
&${\mathfrak{gl}}_{n-1}$
\\
${\mathfrak{sl}}_{2m}$
&${\mathfrak{sp}}_{m}$
&${\mathfrak{gl}}_{2m-1}$
\\
${\mathfrak{sl}}_n$
&${\mathfrak{sl}}_p \oplus {\mathfrak{sl}}_q \oplus {\mathbb{C}}$
&${\mathfrak{gl}}_{n-1}$
\\
${\mathfrak{so}}_{n}$
&${\mathfrak{so}}_{p} \oplus {\mathfrak{so}}_q$
&${\mathfrak{so}}_{n-1}$
\\
${\mathfrak{so}}_{2m}$
&${\mathfrak{gl}}_{m}$
&${\mathfrak{so}}_{2m-1}$
\\
\end{tabular}
  \end{minipage}
\caption{Triples $({\mathfrak{g}}_{\mathbb{C}}, {\mathfrak{h}}_{\mathbb{C}}, {\mathfrak{g}}_{\mathbb{C}}')$ with ${\mathfrak{g}}_{\mathbb{C}}$ simple in Theorem \ref{thm:bdd}}
\label{tab:0.1}
\hfil
\end{table}

\end{corollary}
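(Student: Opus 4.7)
The plan is to invoke Theorem \ref{thm:bdd} and then carry out a case-by-case verification on Berger's list of symmetric pairs. By the equivalence (i) $\Leftrightarrow$ (ii) of Theorem \ref{thm:bdd}, the bounded multiplicity property \eqref{eqn:BBH} holds for the triple $(G,H,G')$ if and only if the generalized flag variety $G_{\mathbb{C}}/Q_{\mathbb{C}}$ is $G_{\mathbb{C}}'$-spherical, where $Q_{\mathbb{C}}$ is the parabolic subgroup with Levi part $Z_{G_{\mathbb{C}}}(\mathfrak{j}_{\mathbb{C}})$ attached to the symmetric pair $(G,H)$. Since this condition depends only on the complexifications $(\mathfrak{g}_{\mathbb{C}}, \mathfrak{h}_{\mathbb{C}}, \mathfrak{g}_{\mathbb{C}}')$, the classification problem becomes purely combinatorial in complex Lie theory.

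The first step is to identify the parabolic $Q_{\mathbb{C}}$ explicitly for each symmetric pair $(\mathfrak{g}_{\mathbb{C}}, \mathfrak{h}_{\mathbb{C}})$ with $\mathfrak{g}_{\mathbb{C}}$ simple. The split rank $\dim \mathfrak{j}_{\mathbb{C}}$ equals the rank of the symmetric space $G/H$, and the Levi type is read off from the restricted (Satake) root system; in particular, when $\operatorname{rank}(G/H)=1$, $Q_{\mathbb{C}}$ is a maximal parabolic, and the flag variety $G_{\mathbb{C}}/Q_{\mathbb{C}}$ is a Grassmannian, a quadric, or one of a few exceptional cominuscule varieties. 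The second step is to enumerate, using Berger's classification, the reductive symmetric pairs $(\mathfrak{g}_{\mathbb{C}}, \mathfrak{g}_{\mathbb{C}}')$ sharing the same ambient $\mathfrak{g}_{\mathbb{C}}$, and for each resulting triple to test whether the $G_{\mathbb{C}}'$-action on $G_{\mathbb{C}}/Q_{\mathbb{C}}$ is spherical.

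For the sphericity check I would rely on established classifications of spherical pairs on partial flag varieties due to Kr\"amer, Brion, Littelmann, Stembridge and others, as well as on the author's own classification of strongly visible actions (condition (iii) of Theorem \ref{thm:bdd}). Two heuristics organize the list in Table \ref{tab:0.1}: the left column consists essentially of those cases where $(\mathfrak{g}_{\mathbb{C}}, \mathfrak{h}_{\mathbb{C}})$ has rank one, so $Q_{\mathbb{C}}$ is a maximal parabolic for which the reductive subgroups acting spherically are classically known; the right column consists of cases where $\mathfrak{g}_{\mathbb{C}}'$ is (essentially) the Levi of a maximal parabolic, in which case multiplicity-freeness on $G_{\mathbb{C}}/Q_{\mathbb{C}}$ can be read directly from a Bruhat-type decomposition. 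A handful of exceptional coincidences, notably $(\mathfrak{e}_6, \mathfrak{f}_4)$, $(\mathfrak{f}_4, \mathfrak{so}_9)$, and $(\mathfrak{sl}_6, \mathfrak{sp}_3)$, require separate verification.

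The principal obstacle is the sheer size of the case-by-case analysis: Berger's list is long, and for each entry one must determine the precise conjugacy class of $Q_{\mathbb{C}}$ and test sphericity against every reductive symmetric $\mathfrak{g}_{\mathbb{C}}'$. Keeping the bookkeeping honest, and checking that the table is \emph{exhaustive} rather than merely sufficient (i.e.\ that any triple outside the table genuinely fails sphericity), is the main technical burden; appeal to existing classifications of spherical actions on flag varieties is essential both to shorten the argument and to avoid repeating classical work.
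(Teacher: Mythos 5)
The paper itself does not prove this corollary: it states explicitly that proofs of the assertions in the Appendix will appear in another paper, and only the equivalence (ii) $\Leftrightarrow$ (iii) of Theorem \ref{thm:bdd} is attributed to the literature. Your reduction is certainly the intended one: by Theorem \ref{thm:bdd}, the bounded multiplicity property \eqref{eqn:BBH} for $(G,H,G')$ is equivalent to $G_{\mathbb{C}}/Q_{\mathbb{C}}$ being $G_{\mathbb{C}}'$-spherical (equivalently $G_U'$-strongly visible), and since $Q_{\mathbb{C}}$ is determined by $(\mathfrak{g}_{\mathbb{C}},\mathfrak{h}_{\mathbb{C}})$, the problem becomes a finite combinatorial classification over the complexified symmetric pairs. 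Up to this point your proposal is sound and matches the framework of the paper.

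However, as a proof it has a genuine gap: the entire content of the corollary is the exhaustive case-by-case verification, and you neither carry it out nor point to a classification that actually covers the required configuration. The parabolics $Q_{\mathbb{C}}$ arising here are in general neither Borel subgroups nor maximal parabolics (e.g.\ for $(\mathfrak{sl}_6,\mathfrak{sp}_3)$, $(\mathfrak{so}_{2m},\mathfrak{so}_{2m-2}\oplus\mathbb{C})$, $(\mathfrak{sp}_n,\mathfrak{sp}_{n-2}\oplus\mathfrak{sp}_2)$, $(\mathfrak{e}_6,\mathfrak{f}_4)$ the symmetric space has rank two), so the lists of Kr\"amer, Brion, Littelmann and Stembridge do not directly decide sphericity of $G_{\mathbb{C}}'$ acting on $G_{\mathbb{C}}/Q_{\mathbb{C}}$; one needs the classification of spherical (equivalently, strongly visible) actions of symmetric subgroups on \emph{partial} flag varieties, and one must also verify exhaustiveness, i.e.\ that every triple outside Table \ref{tab:0.1} fails sphericity. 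Your two organizing heuristics are also inaccurate as stated: the left column is not the rank-one list (see the four pairs above, and conversely Observation \ref{obs:0.6} records rank-one triples that fail), and in the right column $\mathfrak{so}_{n-1}$ is not a Levi subalgebra of $\mathfrak{so}_n$; the correct unifying feature of the right column is that $(\mathfrak{g}_{\mathbb{C}},\mathfrak{g}_{\mathbb{C}}')$ lies in Kr\"amer's list \eqref{eqn:BBlist}, so that the full flag variety is already $G_{\mathbb{C}}'$-spherical, the stronger bound \eqref{eqn:BB} holds, and every choice of symmetric subalgebra $\mathfrak{h}_{\mathbb{C}}$ is admissible. Finally, the bookkeeping \lq\lq{up to automorphisms}\rq\rq\ (e.g.\ the omission of triality-related triples such as $(\mathfrak{so}_8,\mathfrak{spin}_7)$) is part of the statement and is not addressed. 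So the strategy is the right one, but the proposal is an outline of the proof rather than a proof.
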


Here by \lq\lq{automorphisms}\rq\rq\
 we mean inner automorphisms
 for $({\mathfrak{g}}, {\mathfrak{h}})$
 and $({\mathfrak{g}}, {\mathfrak{g}}')$
 separately
 and outer autormorphisms for $({\mathfrak{g}}, {\mathfrak{h}}, {\mathfrak{g}}')$
 simultaneously.  
Thus in Table \ref{tab:0.1}, 
 we have omitted some cases such as 
 $({\mathfrak{g}}_{\mathbb{C}}, {\mathfrak{g}}_{\mathbb{C}}')
=
({\mathfrak{s o}}_8, {\mathfrak{spin}}_7)$, 
 $({\mathfrak{g}}_{\mathbb{C}}, {\mathfrak{h}}_{\mathbb{C}}, {\mathfrak{g}}_{\mathbb{C}}')=({\mathfrak{s o}}_8, {\mathfrak{g l}}_4, {\mathfrak{s o}}_6 \oplus {\mathfrak{s o}}_2)$
 or 
$({\mathfrak{s l}}_4, {\mathfrak{s p}}_2, {\mathfrak{s l}}_2 \oplus {\mathfrak{s l}}_2 \oplus {\mathbb{C}})$.

The right-hand side of Table \ref{tab:0.1} collects
 the case \eqref{eqn:BBlist}, 
 where a stronger bounded multiplicity theorem \eqref{eqn:BB} holds.  
The left-hand side includes:

\begin{example}
\label{ex:opqA}
The setting \eqref{eqn:Opq3} for Theorem \ref{thm:2002} is
 a real form of 
$
    ({\mathfrak{g}}_{\mathbb{C}}, {\mathfrak{h}}_{\mathbb{C}}, {\mathfrak{g}}_{\mathbb{C}}')
=({\mathfrak{s o}}_n, {\mathfrak{s o}}_{n-1}, {\mathfrak{s o}}_p \oplus {\mathfrak{s o}}_q)
$
 in the fourth row of the left-hand side in Table \ref{tab:0.1}.  
\end{example}

{}From Corollary \ref{cor:bdd}, 
 one sees the following:
\begin{observation}
\label{obs:0.6}
{\rm{(1)}}\enspace
The bounded multiplicity \eqref{eqn:BBH} holds
 for any triple 
 $(G,H,G')$ with $\operatorname{rank}G/H=1$
 except for the following two cases:
$
   ({\mathfrak{g}}_{\mathbb{C}}, {\mathfrak{h}}_{\mathbb{C}}, {\mathfrak{g}}_{\mathbb{C}}')
=({\mathfrak{s l}}_n, {\mathfrak{g l}}_{n-1}, {\mathfrak{s o}}_n)
\text{ or }
({\mathfrak{f}}_4, {\mathfrak{s o}}_9, 
 {\mathfrak{s p}}_3 \oplus {\mathfrak{s l}_2).  }
$
\newline
{\rm{(2)}}\enspace
The bounded multiplicity \eqref{eqn:BBH} may hold even
 when $\operatorname{rank}G/H >1$ and $\operatorname{rank}G/G' >1$.  
\end{observation}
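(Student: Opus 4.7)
The plan is to derive both parts of Observation \ref{obs:0.6} by direct inspection of the classification provided by Corollary \ref{cor:bdd} and Table \ref{tab:0.1}. The key point is that, under the hypothesis that both $(G,H)$ and $(G,G')$ are reductive symmetric pairs, Corollary \ref{cor:bdd} reduces the question of bounded multiplicity to checking whether the complexified triple $(\mathfrak{g}_{\mathbb{C}}, \mathfrak{h}_{\mathbb{C}}, \mathfrak{g}'_{\mathbb{C}})$ appears in Table \ref{tab:0.1} up to the stated automorphism equivalences.

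The first step is to enumerate, up to local isomorphism, all reductive symmetric pairs $(G,H)$ with $\mathfrak{g}$ simple and $\operatorname{rank}G/H=1$. Since these correspond exactly to rank-one Riemannian symmetric spaces, the complete list is
\[
(SL_n, GL_{n-1}), \quad (SO_n, SO_{n-1}), \quad (Sp_n, Sp_{n-1}\times Sp_1), \quad (F_4, \operatorname{Spin}_9),
\]
realising respectively $\mathbb{RP}^{n-1}$, $S^{n-1}$, $\mathbb{HP}^{n-1}$, and the Cayley projective plane. For each such $(G,H)$ the second step is to list, up to conjugacy, all reductive symmetric subgroups $G'\subseteq G$ and to match the resulting triple $(\mathfrak{g}_{\mathbb{C}}, \mathfrak{h}_{\mathbb{C}}, \mathfrak{g}'_{\mathbb{C}})$ against the rows of Table \ref{tab:0.1}. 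By Corollary \ref{cor:bdd} the bounded multiplicity property \eqref{eqn:BBH} holds precisely for those triples that do appear. A case-by-case comparison --- going through the symmetric subgroups $\mathfrak{so}_n, \mathfrak{sp}_m, \mathfrak{sl}_p\oplus\mathfrak{sl}_q\oplus\mathbb{C}$ of $\mathfrak{sl}_n$; the subgroups $\mathfrak{so}_p\oplus\mathfrak{so}_q, \mathfrak{gl}_m$ of $\mathfrak{so}_n$; the subgroups of $\mathfrak{sp}_n$; and the two symmetric subgroups $\mathfrak{so}_9$ and $\mathfrak{sp}_3\oplus\mathfrak{sl}_2$ of $\mathfrak{f}_4$ --- will show that every resulting triple appears in Table \ref{tab:0.1} (after applying the permitted inner/outer automorphisms and symmetric-pair equivalences) except for exactly $(\mathfrak{sl}_n, \mathfrak{gl}_{n-1}, \mathfrak{so}_n)$ and $(\mathfrak{f}_4, \mathfrak{so}_9, \mathfrak{sp}_3\oplus\mathfrak{sl}_2)$. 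This proves (1).

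For (2) it suffices to exhibit a single entry of Table \ref{tab:0.1} in which both $\operatorname{rank}G/H>1$ and $\operatorname{rank}G/G'>1$. The third row on the left side, $(\mathfrak{sl}_6, \mathfrak{sp}_3, \mathfrak{sl}_4\oplus\mathfrak{sl}_2\oplus\mathbb{C})$, is such an example: the symmetric space $SU(6)/Sp(3)$ has rank two, and the complex Grassmannian $SU(6)/S(U(4)\times U(2))$ also has rank two, yet the triple lies in Table \ref{tab:0.1} so bounded multiplicity holds by Corollary \ref{cor:bdd}. (Alternatively one could use $(\mathfrak{so}_{2m}, \mathfrak{so}_{2m-2}\oplus\mathbb{C}, \mathfrak{gl}_m)$ with $m\ge 4$.)

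The main obstacle is the exhaustive bookkeeping in the second step: one must be confident that every conjugacy class of reductive symmetric subgroup of $G$ has been accounted for and identified correctly with an entry of Table \ref{tab:0.1}. Particular care is needed for small exceptional coincidences (for example $(Sp_2, Sp_1\times Sp_1, GL_2)$, which via $Sp_2\cong \operatorname{Spin}_5$ corresponds to $(\mathfrak{so}_5, \mathfrak{so}_4, \mathfrak{so}_3\oplus\mathfrak{so}_2)$ in row 4 on the left) and for symmetric-pair equivalences under outer automorphisms of $\mathfrak{g}_{\mathbb{C}}$. Once these identifications are made, both statements of the observation drop out of Table \ref{tab:0.1} by inspection.
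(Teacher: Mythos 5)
Your strategy coincides with the paper's: Observation \ref{obs:0.6} is intended to be read off from Corollary \ref{cor:bdd} and Table \ref{tab:0.1}, and your treatment of part (2) is fine (the entry $({\mathfrak{sl}}_6,{\mathfrak{sp}}_3,{\mathfrak{sl}}_4\oplus{\mathfrak{sl}}_2\oplus{\mathbb{C}})$ indeed has $\operatorname{rank}G/H=\operatorname{rank}G/G'=2$, as does your alternative). The problem lies in part (1), at exactly the step you leave as ``bookkeeping''. For the quaternionic rank-one family $({\mathfrak{g}}_{\mathbb{C}},{\mathfrak{h}}_{\mathbb{C}})=({\mathfrak{sp}}_n,{\mathfrak{sp}}_{n-1}\oplus{\mathfrak{sp}}_1)$, the symmetric subalgebras of ${\mathfrak{sp}}_n$ are ${\mathfrak{sp}}_p\oplus{\mathfrak{sp}}_q$ \emph{and} ${\mathfrak{gl}}_n$. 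The first matches a row of Table \ref{tab:0.1}, but the triple $({\mathfrak{sp}}_n,{\mathfrak{sp}}_{n-1}\oplus{\mathfrak{sp}}_1,{\mathfrak{gl}}_n)$ does not occur in the table, and for $n\ge 3$ it is not equivalent to any entry under the permitted automorphisms (your ${\mathfrak{sp}}_2\simeq{\mathfrak{so}}_5$ coincidence disposes only of $n=2$). So, taken literally, the comparison you describe produces a third exceptional family rather than the asserted two; the conclusion you claim for the case-by-case check does not follow from the ingredients you cite, and you never address this case.

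Nor can the case be rescued by verifying criterion (ii) of Theorem \ref{thm:bdd} directly: here ${\mathfrak{j}}_{\mathbb{C}}$ is spanned by a semisimple element of ${\mathfrak{g}}_{\mathbb{C}}^{-\sigma}$ whose nonzero eigenvalues $\pm c$ each have multiplicity two, so the Levi of $Q_{\mathbb{C}}$ is $GL_2\times Sp_{2n-4}$ and $G_{\mathbb{C}}/Q_{\mathbb{C}}$ is the Grassmannian of isotropic $2$-planes in ${\mathbb{C}}^{2n}$, of dimension $4n-5$. For $n=3$ this dimension is $7$, which already exceeds the dimension $6$ of a Borel subgroup of $GL(3,{\mathbb{C}})$, so $G_{\mathbb{C}}/Q_{\mathbb{C}}$ is not $G_{\mathbb{C}}'$-spherical and the bounded multiplicity property genuinely fails for this rank-one triple (similarly for $n\ge 3$). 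Consequently your proof of (1) is incomplete as written: you must either treat the family $({\mathfrak{sp}}_n,{\mathfrak{sp}}_{n-1}\oplus{\mathfrak{sp}}_1,{\mathfrak{gl}}_n)$ explicitly and reconcile it with the statement and with Table \ref{tab:0.1}, or acknowledge that the inspection does not close with only the two listed exceptions. A separate, purely cosmetic slip: $({\mathfrak{sl}}_n,{\mathfrak{gl}}_{n-1})$ is the pair of the \emph{complex} projective space (compact form $SU(n)/U(n-1)$), not the real one; this does not affect your enumeration of the four rank-one families.
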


Theorem \ref{thm:bdd} also gives a criterion for two reductive symmetric pairs $(G,H_1)$ and $(G, H_2)$
 with the following bounded multiplicity property 
 of tensor product representations.  
\begin{theorem}
[tensor product]
\label{thm:tensor}
Suppose that $(G,H_j)$ $(j=1,2)$ are reductive symmetric pairs, 
 and that ${Q_{j}}_{\mathbb{C}}$ are parabolic subgroups
 of $G_{\mathbb{C}}$ 
 as in Theorem \ref{thm:bdd}.  
Then the following three conditions on the triple 
 $(G,H_1, H_2)$ are equivalent:
\begin{enumerate}
\item[{\rm{(i)}}]
There exists $C>0$
such that 
\begin{equation}
\label{eqn:bddt}
  \dim_{\mathbb{C}} \operatorname{Hom}_G(\Pi_1 \otimes \Pi_2, \Pi) \le C
\quad
{}^{\forall}\Pi_j \in \operatorname{Irr}(G)_{H_j}\,(j=1,2)
\text{ and }
{}^{\forall}\Pi \in \operatorname{Irr}(G).  
\end{equation}
\item[{\rm{(ii)}}]
$(G_{\mathbb{C}} \times G_{\mathbb{C}})/({Q_{1}}_{\mathbb{C}} \times {Q_{2}}_{\mathbb{C}})$ is $G_{\mathbb{C}}$-spherical via the diagonal action.  
\item[{\rm{(iii)}}]
$(G_{\mathbb{C}} \times G_{\mathbb{C}})/({Q_{1}}_{\mathbb{C}} \times {Q_{2}}_{\mathbb{C}})$ is $G_U$-strongly visible via the diagonal action.  
\end{enumerate}
\end{theorem}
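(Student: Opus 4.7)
The plan is to reduce Theorem \ref{thm:tensor} to Theorem \ref{thm:bdd} via the standard device that realizes tensor products as restrictions to a diagonal subgroup. Concretely, set $\widetilde G := G \times G$, $\widetilde H := H_1 \times H_2$, and $\widetilde{G'} := \operatorname{diag}(G) \subset G \times G$. Then $(\widetilde G, \widetilde H)$ is a reductive symmetric pair with defining involution $\sigma_1 \times \sigma_2$, and $\widetilde{G'}$ is an algebraic reductive subgroup of $\widetilde G$, so the triple $(\widetilde G, \widetilde H, \widetilde{G'})$ satisfies the hypotheses of Theorem \ref{thm:bdd}.

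The first step is to check the dictionary between the two settings. Any $\widetilde \Pi \in \operatorname{Irr}(\widetilde G)$ is of the form $\Pi_1 \boxtimes \Pi_2$ with $\Pi_j \in \operatorname{Irr}(G)$, and
\[
  (\widetilde \Pi^{-\infty})^{\widetilde H} \simeq (\Pi_1^{-\infty})^{H_1} \otimes (\Pi_2^{-\infty})^{H_2},
\]
so $\widetilde \Pi \in \operatorname{Irr}(\widetilde G)_{\widetilde H}$ precisely when $\Pi_j \in \operatorname{Irr}(G)_{H_j}$ for $j=1,2$. Moreover, restricting $\Pi_1\boxtimes\Pi_2$ to $\widetilde{G'} = \operatorname{diag}(G)$ is the tensor product $\Pi_1 \otimes \Pi_2$ as a $G$-module, whence for any $\Pi \in \operatorname{Irr}(G) \simeq \operatorname{Irr}(\widetilde{G'})$,
\[
  \invHom{\widetilde{G'}}{\widetilde\Pi^\infty|_{\widetilde{G'}}}{\Pi^\infty}
  = \invHom{G}{\Pi_1^\infty \otimes \Pi_2^\infty}{\Pi^\infty}.
\]
Hence condition (i) of Theorem \ref{thm:tensor} is exactly condition (i) of Theorem \ref{thm:bdd} applied to $(\widetilde G, \widetilde H, \widetilde{G'})$.

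The second step is to identify the parabolic $\widetilde Q_{\mathbb{C}} \subset \widetilde G_{\mathbb{C}} = G_{\mathbb{C}} \times G_{\mathbb{C}}$ produced by the recipe preceding Theorem \ref{thm:bdd} for the pair $(\widetilde G, \widetilde H)$. Since the involution $\sigma_1 \times \sigma_2$ acts componentwise, one has
\[
  \widetilde{\mathfrak g}_{\mathbb C}^{-(\sigma_1\times\sigma_2)} = {\mathfrak g}_{\mathbb C}^{-\sigma_1} \times {\mathfrak g}_{\mathbb C}^{-\sigma_2},
\]
and therefore a maximal semisimple abelian subspace in it can be taken to be ${\mathfrak j}_{1,\mathbb C} \times {\mathfrak j}_{2,\mathbb C}$. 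Consequently $\widetilde Q_{\mathbb C}= {Q_1}_{\mathbb C} \times {Q_2}_{\mathbb C}$. A parallel check for compact real forms gives $\widetilde G_U = G_U \times G_U$ and $\widetilde{G'}_U = \operatorname{diag}(G_U)$, so the sphericity/strong visibility conditions for $\widetilde G_{\mathbb C}/\widetilde Q_{\mathbb C}$ with respect to $\widetilde{G'}_{\mathbb C}$ (resp.\ $\widetilde{G'}_U$) become exactly conditions (ii) and (iii) of Theorem \ref{thm:tensor}.

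With these translations in place, the equivalences (i) $\Leftrightarrow$ (ii) $\Leftrightarrow$ (iii) of Theorem \ref{thm:tensor} follow at once from Theorem \ref{thm:bdd}. The only genuine point of care is step one—namely, the factorization of distinguished vectors for a product symmetric pair and the identification of $\operatorname{Irr}(G \times G)$ with outer tensor products of admissible smooth representations of moderate growth; this is where I expect the main technical check, though it is standard in the Casselman--Wallach framework. All the remaining content, including the nontrivial equivalence of $G_{\mathbb C}'$-sphericity with $G_U'$-strong visibility of $\widetilde G_{\mathbb C}/\widetilde Q_{\mathbb C}$, is absorbed into the cited result of \cite{tanaka}.
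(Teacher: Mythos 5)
Your reduction of Theorem \ref{thm:tensor} to Theorem \ref{thm:bdd} applied to the triple $(G\times G,\, H_1\times H_2,\, \operatorname{diag}G)$ is correct and is precisely the route the paper intends: the theorem is introduced with \lq\lq{Theorem \ref{thm:bdd} also gives a criterion}\rq\rq, and the Appendix proofs are deferred to another paper, so the only content beyond the cited result is the dictionary you verify. Your checks --- that $\operatorname{Irr}(G\times G)_{H_1\times H_2}$ consists exactly of the outer tensor products $\Pi_1\boxtimes\Pi_2$ with $\Pi_j\in\operatorname{Irr}(G)_{H_j}$, that restriction to the diagonal realizes $\Pi_1\otimes\Pi_2$, and that the parabolic attached to the product pair is ${Q_1}_{\mathbb{C}}\times{Q_2}_{\mathbb{C}}$ --- are sound and suffice.
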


By the classification of strongly visible actions
\cite{K2007b}, 
one concludes from Theorem \ref{thm:bdd}
 that such examples for groups of type A
 are rare:
\begin{example}
[tensor product]
\label{ex:tensorA}
Suppose ${\mathfrak{g}}_{\mathbb{C}}={\mathfrak{sl}}(n,{\mathbb{C}})$.  
Then \eqref{eqn:bddt} holds 
 iff 
 $({\mathfrak{g}}_{\mathbb{C}}, {{\mathfrak{h}}_1}_{\mathbb{C}}, 
   {{\mathfrak{h}}_2}_{\mathbb{C}})$ is isomorphic
 to 
$
   ({\mathfrak{sl}}_2, {\mathfrak{so}}_{2}, {\mathfrak{so}}_{2})
$
or
$
   ({\mathfrak{sl}}_4, {\mathfrak{sp}}_{2}, {\mathfrak{sp}}_{2}).  
$
\end{example}

For groups of type BD, one has:

\begin{example}
[tensor product]
\label{ex:tensorOpq}
Let $G=O(p,q)$, 
 and $H_1$, $H_2$ be $O(p-1,q)$ or $O(p,q-1)$.  
Then \eqref{eqn:bddt} holds.  
In particular, 
 the tensor product $\Pi_{\delta,\lambda}^{p,q} \otimes \Pi_{\varepsilon,\nu}^{p,q}$
 decomposes into irreducible unitary representations
 with uniformly bounded multiplicities
 for any $\delta, \varepsilon \in \{+,-\}$, 
 $\lambda \in A_{\delta}(p,q)$, 
 $\nu \in A_{\varepsilon}(p,q)$.  
\end{example}

\begin{example}
[tensor product]
\label{ex:SO8}
Let $G=O(2p,2q)$ with $p+q=4$, 
 $H=O(2p-1,2q)$, 
 and $G'=U(p,q)$.  
Then \eqref{eqn:bddt} holds.  
\end{example}

Proofs of the assertions in Appendix
 will be given in another paper.

\end{document}